\journal{Journal of Differential Equations}
\def\a{\alpha}
\def\t{\tau}
\def\non{\nonumber}
\def\B{\Box}
\begin{document}

\newtheorem{theorem}{Theorem}[section]
\newtheorem{definition}[theorem]{Definition}
\newtheorem{lemma}[theorem]{Lemma}
\newtheorem{proposition}[theorem]{Proposition}
\newtheorem{corollary}[theorem]{Corollary}
\newtheorem{remark}[theorem]{Remark}
\renewcommand{\theequation}{\thesection.\arabic{equation}}
\catcode`@=11 \@addtoreset{equation}{section} \catcode`@=12

\begin{frontmatter}
 \title{Self-similar solutions of fast reaction limit problem with nonlinear diffusion}

\author{Elaine Crooks}

\author{Yini Du\corref{cor1}}
\ead{yini.du@swansea.ac.uk}
\cortext[cor1]{Corresponding author}

\address{Department of Mathematics, Faculty of Science and Engineering,\\ Swansea University, Swansea SA1 8EN, UK}

\begin{abstract}
In this paper, we present an approach to characterising self-similar fast-reaction limits of systems with nonlinear diffusion. For appropriate initial data, in the fast-reaction limit $k\rightarrow\infty$, spatial segregation results in the two components of the original systems converging to the positive and negative parts of a self-similar limit profile $f(\eta)$, where $\eta=\frac{x}{\sqrt t}$, that satisfies one of four ordinary differential systems. The existence of these self-similar solutions of the $k\rightarrow\infty$ limit problems is proved by using shooting methods which focus on $a$, the position of the free boundary which separates the regions where the solution is positive and where it is negative, and $\gamma$, the derivative of $-\phi(f)$ at $\eta=a$. The position of the free boundary gives us intuition about how one substance penetrates into the other, and for specific forms of nonlinear diffusion, the relationship between the given form of the nonlinear diffusion and the position of the free boundary is also studied.
\end{abstract}

\begin{keyword}
Nonlinear diffusion\sep Reaction diffusion system\sep Fast reaction limit\sep Self-similar solution\sep
Free boundary
\end{keyword}

\end{frontmatter}
\section{Introduction}
In this paper, we will study the self-similar characterisation of two pairs of problems, one on the half-line and the other one on the whole real line.

The first pair of problems, on the half-strip $S_T=\left\{(x,t):0\le x<\infty, 0<t<T\right\}$ and with both $\varepsilon>0$ and $\varepsilon=0$, are
\begin{align}
\left\{\begin{aligned}
&w_t=\mathcal{D}(w)_{xx},\quad &&(x,t)\in(0,\infty)\times(0,T),\\
&w(x,0)=w_0(x):=-V_0, &&{\rm for}\ x>0,\\
&w(0,t)=U_0,&&{\rm for}\ t\in (0,T).
\end{aligned}\right.
\label{c}
\end{align}
where the function $\mathcal{D}$ is defined as
\begin{align}
\mathcal{D}(s):=\left\{\begin{aligned}
&\phi(s)\quad &&s\ge0,\\
&-\varepsilon\phi(-s)\quad &&s< 0.\end{aligned}\right.
\label{c0}
\end{align}
The function $\phi\in C^2(\mathbb{R})$ and $\phi'$ are assumed to be strictly increasing with
\begin{align}
\phi(s)>0\ {\rm as}\ s>0\ {\rm and}\ \phi'(s)=\phi(s)=0\ {\rm when}\ s=0. \label{phi}
\end{align}
We will also require that $\phi$ satisfies
\begin{align}
\displaystyle\int_0^1\frac{\phi'(f)}{f}{\rm d}f<\infty\quad {\rm and}\quad \displaystyle\int_1^\infty\frac{\phi'(f)}{f}{\rm d}f=\infty.\label{phi1}
\end{align}

The function $\mathcal{D}$ arises from the $k\rightarrow\infty$ limit of the following reaction diffusion systems that have been studied in \cite{selfnon},
\begin{align}
\left\{
\begin{array}{llll}
  u_{t}=\phi(u)_{xx}-kuv,\quad &(x,t)\in(0,\infty)\times(0,T),\\
  v_{t}=\varepsilon\phi(v)_{xx}-kuv,\quad &(x,t)\in(0,\infty)\times(0,T),\\
  u(0,t)=U_{0},\quad \varepsilon \phi(v)_{x}(0,t)=0,\quad &\mbox{for}\quad t\in(0,T),\\
  u(x,0)=u^k_0(x),\quad v(x,0)=v^k_{0}(x),\quad &\mbox{for}\quad x\in\mathbb{R}^{+}.
\label{a}
\end{array}
\right.
\end{align}
Here, $u$ and $v$ represent concentration of two substances and $k$ is the positive rate constant of the reaction. As in \cite{selfsim}, the initial data for the limiting self-similar solutions are defined as
\begin{align*}
u_0^\infty=\left\{\begin{array}{llll}
&U_0\quad &x=0,\\ &0\quad &x>0,\end{array}\right.\quad
v_0^\infty=\left\{\begin{array}{llll}
&0\quad &x=0,\\ &V_0\quad &x>0,\end{array}\right.
\end{align*}
where $U_0$ and $V_0$ are positive constants.

From \cite{selfnon}, we see that the limits $u$ of $u^k$ and $v$ of $v^k$ segregate, given by the positive and negative parts respectively of a function $w$. It can be shown in \cite{selfnon}, by using a strategy inspired by \cite{wound}, that $w$ is the unique weak solution of (\ref{a}), 
\begin{align*}
u=w^+ \ {\rm and} \ v=-w^-,
\end{align*}
where $s^+=\max\{0,s\}$ and $s^-=\min\{0,s\}$. Here we prove that the limit function $w$ satisfies one of two self-similar problems, depending on whether $\varepsilon>0$ or $\varepsilon=0$. In each case, a function $f:\mathbb{R}^+\rightarrow \mathbb{R}$ describes a self-similar limit solution such that $w(x,t)=f(\eta)$ where $\eta=x/\sqrt{t}$ for $(x,t)\in S_T$. There is a free boundary at $\eta=a$ with $f(\eta)>0$ when $\eta<a$ and $f(\eta)<0$ when $\eta>a$ and the self-similar solution $f(\eta)$ satisfies the boundary conditions $f(a)=0$ and
\begin{align}
\gamma:=-\displaystyle\lim_{\eta\nearrow a}\phi'(f(\eta))f'(\eta).\label{gamma}
\end{align}
When $\varepsilon>0$, the existence of self-similar solutions is proved in Section 3 by using a two-parameter shooting methods focusing on $a$ and $\gamma$.
When $\varepsilon=0$, $\gamma$ has a specific form, namely
\begin{align*}
\gamma=\frac{aV_0}{2},
\end{align*}
and the existence of self-similar solutions is proved in Section 4 by a one-parameter shooting, since $\gamma$ depends on $a$.

The second pair of problems on the strip $Q_T=\left\{(0,t):x\in\mathbb{R},0<t<T\right\}$ with $\mathcal{D}$ from (\ref{c0}) and both with $\varepsilon>0$ and $\varepsilon=0$, are
\begin{align}
\left\{\begin{aligned}
&w_t=\mathcal{D}(w)_{xx},\quad {\rm in}\ Q_T,\\
&w(x,0)=w_0(x):=\left\{\begin{aligned}&U_0,\quad &&{\rm if}\ x<0,\\-&V_0,&&{\rm if}\ x>0.\end{aligned}\right.
\end{aligned}\right.
\label{c2}
\end{align}
The function $\mathcal{D}$ and the initial condition come from the $k\rightarrow\infty$ limit problems of
\begin{align}
\left\{
\begin{array}{llll}
  u_{t}=\phi(u)_{xx}-kuv,\quad &(x,t)\in\mathbb{R}\times(0,T),\\
  v_{t}=\varepsilon\phi(v)_{xx}-kuv,\quad &(x,t)\in\mathbb{R}\times(0,T),\\
  u(x,0)=u_0^k(x),\quad v(x,0)=v_0^k(x),\quad &\mbox{for}\quad x\in\mathbb{R},
\label{a2}
\end{array}
\right.
\end{align}
where we define, as in \cite{selfsim} that
\begin{align*}
u_0^\infty=\left\{\begin{array}{llll}
&U_0\quad &x<0,\\ &0\quad &x>0,\end{array}\right.\quad
v_0^\infty=\left\{\begin{array}{llll}
&0\quad &x<0,\\ &V_0\quad &x>0,\end{array}\right.
\end{align*}
with $U_0, V_0$ positive constants and $k$ as in (\ref{a}).

We use arguments similar to those in half-line case to prove the existence of a self-similar solution of (\ref{c2}). If $\varepsilon>0$, we may have $a<0$, $a=0$ and $a>0$ where $f(a)=0$, since $a$ is not necessarily positive in the whole-line case.

We take advantage of various ideas from earlier work on self-similar solutions for nonlinear diffusion problems and discuss briefly two previous papers \cite{simil,similarity} here.
Let $k(s)$ be continuous with $k(0)=0$ and $k(s)>0$ as $s>0$. We introduce the notation $k(s)$ for ease of comparison with \cite{simil,similarity}, but, $k$ clearly plays the same role as $\phi'$ plays here. Then the solutions of the nonlinear diffusion equation $u_t=(k(u)_x)_x$ can be studied in self-similar form $u(x,t)=f(\eta)$ where $\eta=x/\sqrt{t}$, and $f$ satisfies the equation
\begin{align}
\left(k(f)f'\right)'+\frac{1}{2}\eta f'=0.
\label{st1}
\end{align}

 In \cite{simil}, Atkinson and Peletier proved the existence and uniqueness of a self-similar solution $f(\eta)$ which satisfies (\ref{st1}) for $0<\eta<a$, where $a>0$, under the boundary conditions $f(0)=U$, $\displaystyle\lim_{\eta\rightarrow a}f(\eta)=0$ and $\displaystyle\lim_{\eta\rightarrow a}k(f(\eta))f'(\eta)=0$. They consider two cases in describing the dependence of $a$ and $U$,
 \begin{align*}
A. \displaystyle\int_1^\infty\frac{k(s)}{s}{\rm d}s=\infty \qquad {\rm and} \qquad  B. \displaystyle\int_1^\infty\frac{k(s)}{s}{\rm d}s<\infty.
 \end{align*}
 They found that as $U\rightarrow\infty$, $a=a(U)$ tends to infinity in Case A whereas $a(U)$ tends to a finite limit in Case B.
   Here, we only consider the case when $\displaystyle\int_1^\infty\frac{k(s)}{s}{\rm d}s=\infty$. The proof in \cite{simil} depends on a discussion of the function $b(a)$, which is defined as the value at $\eta=0$ of the solution $f(\eta)=f(\eta;a)$ of (\ref{st1}) with boundary conditions $\displaystyle\lim_{\eta\rightarrow a}f(\eta)=0$ and $\displaystyle\lim_{\eta\rightarrow a}k(f(\eta))f'(\eta)=0$. A similar problem in an unbounded interval $0<\eta<\infty$ with boundary conditions $f(0)=U$ and $\displaystyle\lim_{\eta\rightarrow\infty}f(\eta)=0$ is studied in \cite{similarity} by Craven and Peletier. Note that in \cite{similarity}, $f(\eta)>0$ for all $\eta>0$. \cite{similarity} proved the existence and uniqueness of a weak solution by a shooting method where the initial value problem $f(0)=U$, $f'(0)=\beta$ is considered. We will adapt ideas of studying the function $b(a)$ from \cite{simil} and shooting methods from \cite{similarity} to prove existence of self-similar solutions in this paper.

Note that \cite{simil,similarity} treated a single equation where the solutions were always non-negative. In this paper, we have sign-changing solutions since the free boundary separates regions where the solutions are positive and where the solutions are negative. Here, our self-similar solutions satisfy a certain equation when they are positive, and a different equation where they are negative. We exploit ideas from \cite{simil,similarity} to investigate our self-similar limit problems that involve these two equations.

 We will study the existence of a solution $f$ that satisfies (\ref{j}) with given boundary conditions by splitting it into two parts: $\eta<a$ where $f(\eta)$ is positive, and $\eta>a$ where $f(\eta)$ is negative. Then we will discuss the existence and properties of $\displaystyle\lim_{\eta\rightarrow 0}f(\eta)$ and $\displaystyle\lim_{\eta\rightarrow\infty}f(\eta)$. These results will be used to study $b(a,\gamma)$, the value at $\eta\rightarrow0$ of the solution $f(\eta)=f(\eta;a,\gamma)$, and $d(a,\gamma)$, the value at $\eta\rightarrow\infty$ of the solution $f(\eta)=f(\eta;a,\gamma)$, where $\gamma$ as in (\ref{gamma}), and also to implement a two-parameter shooting method.

This paper is organised as follows. In Section 2, the limit problem (\ref{c}) is characterised as a self-similar solution of the problem first in Theorem {\ref{tj}} when $\varepsilon>0$, and then in Theorem {\ref{tjj}} when $\varepsilon=0$. Section 3 focuses on properties of the parameters $a$ and $\gamma$ in the study of the self-similar solution $f$, and prove some preliminary results that are useful in deducing existence of self-similar solutions. The existence of self-similar solutions when $\varepsilon\ge0$ is proved in Section 3.5 and Section 4. Section 5 contains the whole-line counterparts of the study of the half-line problem in Sections 2-4.

In Section 6, we also consider a specific family of $\phi'(f)=f^{m-1}$ where $m>2$ is a constant, and investigate how the free boundary position $a$ is affected by $m$. Note that with fixed $U_0,V_0$, there exists a unique self-similar solution which determines $a$ and $\gamma$. We prove some further results under the additional conditions that $U_0<1$ and $m\ge2$. In particular, if $\varepsilon=0$, we find that if $m_1>m_2$, then $a_{m_1}<a_{m_2}$ which is proved in Theorem \ref{tend}. This result indicates that when $m$ gets smaller, one substance penetrates into the other faster, which can also be seen from the numerical result in \cite{onclass}.

\noindent{\bf Acknowledgement }

 {The authors gratefully acknowledge funding from the EPSRC EP/W522545/1. This paper is based on part of corresponding author's Ph.D thesis at Swansea University}

\section{Half-line case: preliminaries}
First, we give the definition of the weak solution of problem (\ref{c}). Note that the uniqueness of the weak solution of (\ref{c}) is proved in \cite{selfnon}.
\begin{definition}
A function $w$ is a weak solution of (\ref{c}) if
\begin{itemize}
\item[{\rm(i)}]$w\in L^\infty(S_T)$,
\item[{\rm(ii)}]$\mathcal{D}(w)\in\mathcal{D}(\hat w)+L^{2}(0,T;W^{1,2}_0(\mathbb{R}^+))$, where $\hat w\in C^\infty(\mathbb{R}^+)$ is a smooth function with $\hat w=U_0$ when $x=0$ and $\hat w=-V_0$ when $x>1$,
\item[{\rm(iii)}]$w$ satisfies
\begin{align}
\int_{\mathbb{R}^+}w_0(x)\xi(x,0){\rm d}x+\iint_{S_T}w\xi_t{\rm d}x{\rm d}t=\iint_{S_T}\mathcal{D}(w)_x\xi_x{\rm d}x{\rm d}t.
\label{au}
\end{align}
for all $\xi\in\mathcal{F}_T:=\left\{\xi\in C^1(S_T):\ \xi(0,t)=\xi(\cdot,T)=0\  {\rm for}\ t\in(0,T)\ {\rm and}\ {\rm supp}\,\xi\subset[0,J]\times[0,T]\right.\\ \left.  {\rm for}\ {\rm some}\ J>0 \right\}.$
\end{itemize}
\label{au1}
\end{definition}

We state a free-boundary problem, including interface conditions, that is satisfied by the solution $w$
 of (\ref{c}) under some regularity assumptions and conditions on the form of the free boundary.
The following result follows from a similar approach to that of \cite[Theorem 5]{sing}. We sketch the key points here, focusing on the parts where our problem needs a slightly different argument.
\begin{theorem}
Let $w$ be the unique weak solution of problem (\ref{c}). Suppose that there exists a function $\beta:[0,T]\rightarrow\mathbb{R^+}$ such that for each $t\in[0,T]$,
\begin{align*}
w(x,t)>0\ {\rm if}\ x<\beta(t)\quad {\rm and}\quad w(x,t)<0\ {\rm if}\ x>\beta(t).
\end{align*}
Then if $t\mapsto\beta(t)$ is sufficiently smooth and the functions $u:=w^+$ and $v:=-w^-$ are smooth up to $\beta(t)$, the functions $u,v$ satisfy
\begin{align}
\left\{\begin{aligned}
&u_t=\phi(u)_{xx},\quad &&{\rm in}\ (x,t)\in S_T:x<\beta(t),\\
&v_t=\varepsilon\phi(v)_{xx},\quad &&{\rm in}\ (x,t)\in S_T:x>\beta(t),\\
&\langle\phi(u)\rangle=\varepsilon\langle\phi(v)\rangle=0,\quad &&{\rm on}\ \Gamma_T:=\left\{(x,t)\in S_T:x=\beta(t)\right\},\\
&\langle v\rangle\beta'(t)=\langle\phi(u)_x-\varepsilon\phi(v)_x\rangle,\quad &&{\rm on}\ \Gamma_T:=\left\{(x,t)\in S_T:x=\beta(t)\right\},\\
&u=U_0,\quad &&{\rm on}\ \left\{0\right\}\times[0,T],\\
&u(\cdot,0)=u_0^\infty(\cdot),\ v(\cdot,0)=v_0^\infty(\cdot),\quad &&{\rm in}\ \mathbb{R}^+,
\end{aligned}\right.
\label{e}
\end{align}
where $\langle\cdot\rangle$ denotes the jump across $\beta(t)$ from $\left\{x<\beta(t)\right\}$ to $\left\{x>\beta(t)\right\}$,
\begin{align*}
\langle \alpha\rangle:=\displaystyle\lim_{x\searrow\beta(t)}\alpha(x,t)-\displaystyle\lim_{x\nearrow\beta(t)}\alpha(x,t),
 \end{align*}
and $\beta'(t)$ denotes the speed of propagation of the free boundary $\beta(t)$.
\label{tl}
\end{theorem}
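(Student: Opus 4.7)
The proof follows the blueprint of \cite[Theorem 5]{sing}: derive the PDEs separately on each side of $\beta(t)$, extract the continuity of $\mathcal{D}(w)$ across $\Gamma_T$, and obtain a Rankine--Hugoniot-type jump via the divergence theorem. I would proceed in three steps, flagging the mild modifications needed because $\mathcal{D}$ here is built from the same $\phi$ scaled differently on either side of zero rather than from two distinct nonlinearities.

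First, I would derive the PDEs on each side. In $\{x<\beta(t)\}$ one has $w=u>0$, so $\mathcal{D}(w)=\phi(u)$ and the weak equation $w_t=\mathcal{D}(w)_{xx}$ reduces, under the smoothness hypothesis on $u$, to the classical equation $u_t=\phi(u)_{xx}$. Symmetrically, in $\{x>\beta(t)\}$, where $w=-v<0$, $\mathcal{D}(w)=-\varepsilon\phi(v)$ gives $v_t=\varepsilon\phi(v)_{xx}$. The initial and boundary data are inherited from (\ref{c}) together with the identification of $u,v$ as the positive and negative parts of $w$.

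Second, the identities $\langle\phi(u)\rangle=\varepsilon\langle\phi(v)\rangle=0$ follow from condition (ii) of Definition~\ref{au1}: it places $\mathcal{D}(w)(\cdot,t)$ in $W^{1,2}_{\mathrm{loc}}(\mathbb{R}^+)$ for a.e.\ $t$, hence continuous in $x$. Evaluating the one-sided limits of $\mathcal{D}(w)$ at $x=\beta(t)$ and using the assumed smoothness of $u$ and $v$ up to $\Gamma_T$, both limits must coincide; a direct inspection combined with the sign condition of $w$ on each side shows that this common value is $0$, yielding the stated jump identities and in particular $u(\beta(t)^-,t)=v(\beta(t)^+,t)=0$.

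Third, to derive the Rankine--Hugoniot condition, I would take a test function $\xi\in\mathcal{F}_T$ with compact support in $S_T$ straddling a portion of $\Gamma_T$ but avoiding $\{x=0\}$, $\{t=0\}$, and $\{t=T\}$. Substituting into (\ref{au}) and splitting the integral along $\Gamma_T$ into the two smooth regions $\Omega_\pm$, one applies the divergence theorem on each side. The interior contributions cancel by the PDEs of Step 1, leaving only line integrals along $\Gamma_T$. Parametrising $\Gamma_T$ by $t\mapsto(\beta(t),t)$ and using the outward unit normals on $\Omega_\pm$, the combined boundary terms reduce to
\begin{align*}
\int_0^T\bigl[\phi(u)_x(\beta^-,t)+v(\beta^+,t)\beta'(t)+\varepsilon\phi(v)_x(\beta^+,t)\bigr]\xi(\beta(t),t)\,{\rm d}t=0.
\end{align*}
Since $\xi$ is arbitrary on $\Gamma_T$, the integrand vanishes; rewriting the bracket using $u\equiv 0$ on $\Omega_+$ and $v\equiv 0$ on $\Omega_-$ gives precisely $\langle v\rangle\beta'(t)=\langle\phi(u)_x-\varepsilon\phi(v)_x\rangle$. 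The main obstacle is the careful sign bookkeeping in this last step: one must correctly orient the outward normals on $\Omega_\pm$ and verify that no residual contribution arises from the Dirichlet boundary at $x=0$ or from the time slices, which is precisely why $\xi$ is localised away from those strips.
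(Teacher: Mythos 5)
Your proposal is correct and follows essentially the same route as the paper: the interior PDEs come from the sign of $w$ on each side of $\beta(t)$, the conditions $\langle\phi(u)\rangle=\varepsilon\langle\phi(v)\rangle=0$ come from the continuity in $x$ of $\mathcal{D}(w)$ guaranteed by Definition~\ref{au1}(ii) together with the opposite signs of the one-sided limits, and the jump condition is the Rankine--Hugoniot relation extracted from the weak formulation (\ref{au}). The only difference is one of detail: the paper simply asserts the Rankine--Hugoniot identity (\ref{fb2}) by reference to \cite[Theorem 5]{sing} and then uses $\langle u\rangle=0$ to reduce it, whereas you spell out the divergence-theorem computation with a test function straddling $\Gamma_T$; your sign bookkeeping is consistent with the paper's conclusion.
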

\begin{proof}
We recall that $(u,v)$ satisfies
\begin{align*}
\iint_{S_T}(u-v)\xi_t{\rm d}x{\rm d}t+\int_{\mathbb{R}^+}(u_0^\infty-v_0^\infty)\xi(x,0){\rm d}x=\iint_{S_T}(\phi(u)-\varepsilon\phi(v))_x\xi_{x}{\rm d}x{\rm d}t,
\end{align*}
for all $\xi\in\mathcal{F}_T$, from which we have
\begin{align}
\langle-u+v\rangle\beta'(t)+\langle-\phi(u)_x+\varepsilon\phi(v)_x\rangle=0\quad {\rm on}\ \Gamma_T:=\left\{(x,t)\in S_T:x=\beta(t)\right\}.\label{fb2}
\end{align}

Now we know that $\mathcal{D}(w)$ is a continuous function of $x$ for almost every $t\in[0,T]$, since $\mathcal{D}(w)\in\mathcal{D}(\hat w)+L^2(0,T;W^{1,2}(\mathbb{R}^+))$ by Definition \ref{au1} (ii). So $\langle\mathcal{D}(w)\rangle=0$, which implies
\begin{align*}
-\lim_{x\searrow\beta(t)}\varepsilon\phi(-w^-)-\lim_{x\nearrow\beta(t)}\phi(w^+)=-\lim_{x\searrow\beta(t)}\varepsilon\phi(v)-\lim_{x\nearrow\beta(t)}\phi(u)=0.
\end{align*}
Therefore we get
\begin{align}
\langle\phi(u)\rangle=\varepsilon\langle\phi(v)\rangle=0.\label{tl3}
\end{align}
Moreover, since $\phi\in C^2(\mathbb{R})$ is strictly increasing, $u(\cdot,t)$ is continuous across $\beta(t)$ and if $\varepsilon>0$, $v(\cdot,t)$ is also continuous across $\beta(t)$, so that
\begin{align}
&\langle u\rangle=0\quad {\rm if}\ \varepsilon\ge0,\label{tl1} \\ &\langle v\rangle=0\quad {\rm if}\ \varepsilon>0.\label{tl2}
\end{align}

Then (\ref{fb2}) and the fact that $\langle u\rangle=0$ imply that
\begin{align}
\langle v\rangle\beta'(t)=\langle\phi(u)_x-\varepsilon\phi(v)_x\rangle,\quad {\rm on}\ \Gamma_T.\label{tl4}
\end{align}
\end{proof}

The following two limit problems are obtained by interpreting the interface  conditions on $\beta(t)$.
\begin{corollary}
Let $w$ and $\beta:[0,t]\rightarrow\mathbb{R^+}$ satisfy the hypotheses of theorem \ref{tl}. Then the functions $u:=w^+$, $v:=-w^-$ satisfy one of limit problems depending on whether $\varepsilon>0$ or $\varepsilon=0$. If $\varepsilon>0$, then
\begin{align}
\left\{\begin{aligned}
&u_t=\phi(u)_{xx},\quad &&{\rm in}\ \left\{(x,t)\in S_T:x<\beta(t)\right\},\\
&v=0,\quad &&{\rm in}\ \left\{(x,t)\in S_T:x<\beta(t)\right\},\\
&v_t=\varepsilon\phi(v)_{xx},\quad &&{\rm in}\ \left\{(x,t)\in S_T:x>\beta(t)\right\},\\
&u=0,\quad &&{\rm in}\ \left\{(x,t)\in S_T:x>\beta(t)\right\},\\
&\lim_{x\nearrow\beta(t)}u(x,t)=0=\lim_{x\searrow\beta(t)}v(x,t)\quad &&{\rm for}\ {\rm each}\ t\in[0,T],\\
&\lim_{x\nearrow\beta(t)}\phi[u(x,t)]_x=-\varepsilon\lim_{x\searrow\beta(t)}\phi[v(x,t)]_x\quad &&{\rm for}\ {\rm each}\ t\in[0,T],\\
&u=U_0,\quad &&{\rm on}\ \left\{0\right\}\times[0,T],\\
&u(\cdot,0)=u_0^\infty(\cdot),\ v(\cdot,0)=v^\infty_0(\cdot),\quad &&{\rm in}\ \mathbb{R}^+,
\end{aligned}\right.
\end{align}
whereas if $\varepsilon=0$ and we suppose additionally that $\beta(0)=0$ and $t\mapsto\beta(t)$ is a non-decreasing function, then
\begin{align}
\left\{\begin{aligned}
&u_t=\phi(u)_{xx},\quad &&{\rm in}\ \left\{(x,t)\in S_T:x<\beta(t)\right\},\\
&v=0,\quad &&{\rm in}\ \left\{(x,t)\in S_T:x<\beta(t)\right\},\\
&v=V_0,\quad &&{\rm in}\ \left\{(x,t)\in S_T:x>\beta(t)\right\},\\
&u=0,\quad &&{\rm in}\ \left\{(x,t)\in S_T:x>\beta(t)\right\},\\
&\lim_{x\nearrow\beta(t)}u(x,t)=0\quad &&{\rm for}\ {\rm each}\ t\in[0,T],\\
&V_0\beta'(t)=-\lim_{x\nearrow\beta(t)}\phi[u(x,t)]_x\quad &&{\rm for}\ {\rm each}\ t\in[0,T],\\
&u=U_0,\quad &&{\rm on}\ \left\{0\right\}\times[0,T],\\
&u(\cdot,0)=u_0^\infty(\cdot),\ v(\cdot,0)=v^\infty_0(\cdot),\quad &&{\rm in}\ \mathbb{R}^+,
\end{aligned}\right.
\end{align}
where $\beta'(t)$ denotes the speed of propagation of the free boundary $\beta(t)$.
\label{clfb}
\end{corollary}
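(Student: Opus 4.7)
The plan is to read off the two limit systems directly from Theorem \ref{tl} combined with the segregation identity $u = w^+$, $v = -w^-$. The starting observation is that wherever $w > 0$ we automatically have $v \equiv 0$, and wherever $w < 0$ we have $u \equiv 0$. Together with the sign hypothesis on $w$ across $\beta(t)$, this produces immediately the interior identities $v = 0$ for $x < \beta(t)$ and $u = 0$ for $x > \beta(t)$ in both cases; the interior PDEs for $u$ and $v$ are then exactly those already stated in Theorem \ref{tl}.

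Next I would derive the trace conditions at $\beta(t)$. From (\ref{tl3}) we have $\langle \phi(u) \rangle = 0$, and since $\phi$ is strictly increasing with $\phi(0) = 0$ and $u \equiv 0$ on $\{x > \beta(t)\}$, this forces $\lim_{x \nearrow \beta(t)} u(x,t) = 0$. In the case $\varepsilon > 0$, an identical argument applied to $\varepsilon\phi(v)$, using $v \equiv 0$ on $\{x < \beta(t)\}$ together with (\ref{tl2}), yields $\lim_{x \searrow \beta(t)} v(x,t) = 0$. Substituting $\langle v \rangle = 0$ into the Stefan condition (\ref{tl4}) and noting that $\phi(u)_x$ vanishes on $\{x > \beta(t)\}$ while $\phi(v)_x$ vanishes on $\{x < \beta(t)\}$ then delivers the flux matching $\lim_{x \nearrow \beta(t)} \phi(u)_x = -\varepsilon \lim_{x \searrow \beta(t)} \phi(v)_x$, completing the $\varepsilon > 0$ system.

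For the $\varepsilon = 0$ system, the only remaining task is to identify $v$ on $\{x > \beta(t)\}$. Since the second PDE there degenerates to $v_t \equiv 0$, the value of $v$ is constant in time along every vertical line contained in this region. Given $(x_0, t_0)$ with $x_0 > \beta(t_0)$, the monotonicity of $\beta$ combined with $\beta(0) = 0$ gives $\beta(s) \le \beta(t_0) < x_0$ for all $s \in [0, t_0]$, so the whole segment $\{x_0\} \times [0, t_0]$ lies in $\{x > \beta(s)\}$; integrating $v_t = 0$ along it yields $v(x_0, t_0) = v(x_0, 0) = V_0$. Consequently $\langle v \rangle = V_0$, and (\ref{tl4}) reduces to $V_0 \beta'(t) = -\lim_{x \nearrow \beta(t)} \phi(u)_x$.

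The main obstacle is this last step. Without the monotonicity of $\beta$, the vertical segment over $(x_0, t_0)$ could exit $\{x > \beta(s)\}$ at some earlier time and re-enter later, so one could no longer conclude $v(x_0, t_0) = V_0$ by tracing back to $t = 0$; the degenerate equation carries no diffusive mechanism that would otherwise propagate the initial datum. This is precisely why the extra hypotheses $\beta(0) = 0$ and $\beta$ non-decreasing are indispensable in the $\varepsilon = 0$ case, whereas in the $\varepsilon > 0$ case the nondegenerate diffusion of $v$ determines its values on $\{x > \beta(t)\}$ with no such assumption.
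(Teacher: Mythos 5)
Your proposal is correct and follows essentially the same route as the paper: the interior identities come from the segregation $u=w^+$, $v=-w^-$, the trace and flux conditions come from the jump relations of Theorem \ref{tl}, and in the $\varepsilon=0$ case the degenerate equation $v_t=0$ together with $\beta(0)=0$ and the monotonicity of $\beta$ forces $v\equiv V_0$ on $\{x>\beta(t)\}$, whence $\langle v\rangle=V_0$ and the Stefan condition $V_0\beta'(t)=-\lim_{x\nearrow\beta(t)}\phi(u)_x$. Your vertical-segment argument for why monotonicity of $\beta$ is indispensable is a slightly more explicit version of the paper's one-line justification, but the substance is identical.
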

\begin{proof}We have $\langle u\rangle=0$ from (\ref{tl1}). From (\ref{tl2}), we know that if $\varepsilon>0$, $\langle v\rangle=0$,
whereas if $\varepsilon=0$, $v(\cdot,t)$ jumps across $\beta(t)$. The fact that $v_t=0$ in $\left\{(x,t)\in S_T:x>\beta(t)\right\}$ together with the initial condition that $v_0^\infty(x)=V_0$ if $x>0$ give the result that $v(x,t)=V_0$ for all $x\ge\beta(t)$, since $\beta(0)=0$ and $t\mapsto\beta(t)$ is a non-decreasing function. It follows that if $\varepsilon=0$,
\begin{align*}
\langle v\rangle=V_0-0=V_0\ {\rm for}\ {\rm all}\ t\in[0,T].
\end{align*}
The normal derivative condition (\ref{tl4}) implies that if $\varepsilon>0$, then $\langle\phi(u)_x-\varepsilon\phi(v)_x\rangle=0$, so that
\begin{align*}
\lim_{x\nearrow\beta(t)}\phi[u(x,t)]_x=-\varepsilon\lim_{x\searrow\beta(t)}\phi[v(x,t)]_x.
\end{align*}
On the other hand, if $\varepsilon=0$, then
\begin{align*}
V_0\beta'(t)=-\displaystyle\lim_{x\nearrow\beta(t)}\phi(u)_x.
\end{align*}
\end{proof}

Next we will prove that if we have a self-similar solution of (\ref{j}), then it is a weak solution of (\ref{c}) in the sense of Definition \ref{au1} and then prove the existence of the self-similar solution of (\ref{j}) by a two-parameter method in Section 3.5.
\begin{theorem}\label{tj}
The unique weak solution $w$ of problem (\ref{c}) with $\varepsilon>0$ has a self-similar form. There exists a function $f:\mathbb{R^+}\mapsto\mathbb{R^+}$ and a constant $a\in\mathbb{R^+}$ such that
\begin{align*}
w(x,t)=f\left(\frac{x}{\sqrt{t}}\right),\ (x,t)\in S_T\ {\rm and}\ \beta(t)=a\sqrt{t},\ t\in[0,T].
\end{align*}
 Denoting $\eta=\displaystyle\frac{x}{\sqrt{t}}$, $f$ satisfies the system
\begin{align}
\left\{\begin{aligned}
  &-\frac{1}{2}\eta f'(\eta)=[\phi'(f(\eta))f'(\eta)]',\quad &&{\rm if}\ \eta<a,\\
  &-\frac{1}{2}\eta f'(\eta)=[\varepsilon\phi'(-f(\eta))f'(\eta)]',\quad &&{\rm if}\ \eta>a,\\
  &f(0)=U_0,\quad \lim_{\eta\rightarrow \infty}f(\eta)=-V_0,\\
  &\lim_{\eta\nearrow a}f(\eta)=0=-\lim_{\eta\searrow a}f(\eta),\\
  &\lim_{\eta\nearrow a}\phi'(f(\eta))f'(\eta)=\varepsilon\lim_{\eta\searrow a}\phi'(-f(\eta))f'(\eta),\\
\end{aligned}\right.
\label{j}
\end{align}
where a prime denotes differentiation with respect to $\eta$.
\end{theorem}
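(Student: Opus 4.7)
The plan is to exploit the parabolic scale invariance of problem (\ref{c}) combined with the uniqueness of its weak solution from \cite{selfnon}. For each $\lambda>0$, set $w_\lambda(x,t):=w(\lambda x,\lambda^2 t)$. Both the equation $w_t=\mathcal{D}(w)_{xx}$ and the prescribed data $w(0,t)=U_0$, $w(x,0)=-V_0$ are invariant under this rescaling, so $w_\lambda$ is again a weak solution of (\ref{c}) in the sense of Definition \ref{au1}. Uniqueness then forces $w_\lambda\equiv w$ for every $\lambda>0$. Choosing $\lambda=1/\sqrt{t}$ gives $w(x,t)=w(x/\sqrt{t},1)$, so defining $f(\eta):=w(\eta,1)$ yields the desired representation $w(x,t)=f(x/\sqrt{t})$. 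The same scaling, applied to the free boundary $\beta(t)$ from Corollary \ref{clfb}, forces $\beta(\lambda^2 t)=\lambda\beta(t)$, whence $\beta(t)=a\sqrt{t}$ with $a:=\beta(1)$.

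With the self-similar form in hand, the ODE system is obtained by direct substitution. A routine chain-rule computation gives $w_t=-\frac{\eta}{2t}f'(\eta)$. On $\{\eta<a\}$ we have $\mathcal{D}(w)=\phi(f)$, so $\mathcal{D}(w)_{xx}=\frac{1}{t}[\phi'(f)f']'(\eta)$, yielding the first ODE in (\ref{j}). On $\{\eta>a\}$ the identity $\mathcal{D}(w)=-\varepsilon\phi(-f)$, differentiated twice in $x$, produces $\mathcal{D}(w)_{xx}=\frac{\varepsilon}{t}[\phi'(-f)f']'(\eta)$ and thus the second ODE. The remaining conditions all come from the data: $f(0)=w(0,1)=U_0$; the limit $\lim_{\eta\to\infty}f(\eta)=-V_0$ encodes the initial condition $w(x,0)=-V_0$ at each fixed $x>0$, since $\eta\to\infty$ as $t\to 0^+$; the relations $f(a^\pm)=0$ express continuity of $w$ across $\beta(t)$; and the flux condition in (\ref{j}) is exactly the self-similar translation of the interface relation $\lim_{x\nearrow\beta(t)}\phi(u)_x=-\varepsilon\lim_{x\searrow\beta(t)}\phi(v)_x$ from Corollary \ref{clfb}, using $\phi(u)_x=t^{-1/2}\phi'(f)f'$ and $\phi(v)_x=-t^{-1/2}\phi'(-f)f'$.

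The delicate step is the rigorous verification that $w_\lambda$ lies in the weak-solution class of Definition \ref{au1}: one must check that the membership $\mathcal{D}(w)\in\mathcal{D}(\hat w)+L^2(0,T;W^{1,2}_0(\mathbb{R}^+))$ is preserved under the parabolic rescaling, and that the test-function identity (\ref{au}) is invariant under $(x,t)\mapsto(\lambda x,\lambda^2 t)$ once the test function is replaced by $\tilde\xi(x,t):=\xi(x/\lambda,t/\lambda^2)$ with appropriate rescaling of the measure and derivatives. I expect this scaling-invariance check to be the main technical burden; once it is secured, uniqueness forces the self-similar structure and the remaining derivation is essentially computational. A secondary point is to justify the existence of the pointwise limits in the interface condition, which one handles using the regularity already supplied by Theorem \ref{tl} and Corollary \ref{clfb} transferred to the self-similar profile $f$.
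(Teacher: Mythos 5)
Your argument runs in the opposite direction to the paper's, and the direction you chose creates a genuine gap. The paper proves this theorem by \emph{constructing} a classical solution $f$ of the ODE system (\ref{j}) (via the two-parameter shooting of Section 3, Theorem \ref{twe1}), then checking that $w(x,t)=f(x/\sqrt{t})$ satisfies Definition \ref{au1}, and finally invoking uniqueness of weak solutions to conclude that the weak solution \emph{is} this self-similar function. You instead start from the abstract weak solution $w$, use parabolic scaling plus uniqueness to deduce $w(x,t)=w(x/\sqrt{t},1)$, and then claim the ODE system by ``direct substitution.'' The scaling-plus-uniqueness step is a legitimate classical device (modulo the technical check you flag, and a minor issue that $w_\lambda$ lives on $S_{T/\lambda^2}$ so uniqueness must be applied on the common time interval). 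The problem is everything after it.

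To pass from $w(x,t)=f(x/\sqrt{t})$ to the system (\ref{j}) you need: (a) that $f$ is twice continuously differentiable away from a single sign-change point, so that the chain-rule computation is meaningful; (b) that such a sign-change point $\eta=a$ exists at all, i.e.\ that the free boundary has the form $\{f>0\}=(0,a)$, $\{f<0\}=(a,\infty)$; and (c) that the one-sided limits of $\phi'(f)f'$ at $a$ exist. None of this is available for a general weak solution of a degenerate equation ($\phi'(0)=0$, so the diffusion degenerates exactly at the interface). You appeal to ``the regularity already supplied by Theorem \ref{tl} and Corollary \ref{clfb},'' but those results supply nothing: they are conditional statements whose hypotheses explicitly \emph{assume} the existence of a smooth separating curve $\beta(t)$ and smoothness of $u,v$ up to it. Using them here is circular. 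The paper's ordering exists precisely to avoid this: all the regularity, monotonicity and free-boundary structure are obtained on the ODE side (Lemmas \ref{mono}--\ref{ld0} and the shooting construction), where they can be proved, and uniqueness then transfers them to $w$ for free. As written, your proposal also never establishes that a solution of (\ref{j}) with the prescribed values $U_0$ and $-V_0$ exists, which is part of what the theorem asserts and is where the bulk of the paper's work lies. To repair your route you would need independent regularity and interface-structure theory for weak solutions of $w_t=\mathcal{D}(w)_{xx}$; absent that, the paper's construct-then-identify strategy is the one that closes.
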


\begin{proof}We know that from \cite[Lemma 4.7]{selfnon} if $w(x,t)=f(\frac{x}{\sqrt{t}})$ is a weak solution of (\ref{c}), then it is unique. We therefore need to show that a solution to (\ref{j}) exists, which will be postponed to Section 3.3, and that if $f$ satisfies (\ref{j}), then it is a weak solution of (\ref{c}), that is, it satisfies Definition \ref{au1}.

The weak solution of (\ref{c}) satisfies
\begin{align*}
\iint_{S_T}w\xi_t{\rm d}x{\rm d}t-\iint_{S_T}\mathcal{D}(w)_x\xi_x{\rm d}x{\rm d}t=V_0\int_{\mathbb{R}^+}\xi(x,0){\rm d}x,
\end{align*}
where $\xi\in\mathcal{F}_T$. If we write $w(x,t)=f(\frac{x}{\sqrt{t}})$, then since $\xi(\cdot,T)=0$, with simple calculations we can prove that $f$ satisfies Definition \ref{au1} (iii).

By some calculations which use the properties of $f$ shown in Lemmas \ref{mono}-\ref{ld0} below, it can be shown that $\phi(U_0)-\phi(f)\in L^2((0,a))$,  $\phi(V_0)-\phi(-f)\in L^2((a,\infty))$ and $-(\phi(-f))'\in L^2((a,\infty))$, see \cite{thesis} for details. It therefore follows that if $f$ satisfies (\ref{j}), then by changing variables, it satisfies Definition \ref{au1} (ii), that is
\begin{align*}
\mathcal{D}(f)\in\mathcal{D}(\hat w)+L^2(0,T;W^{1,2}_0(\mathbb{R}^+)).
\end{align*}
Hence $f$ satisfies Definition \ref{au1}. It remains to prove the existence of solution of Problem (\ref{j}), which is done in Theorem \ref{twe1} below.
\end{proof}

Recall that
\begin{align}
\gamma:=-\displaystyle\lim_{\eta\nearrow a}\phi'(f(\eta))f'(\eta).\label{gam}
\end{align}
Note that it follows from the free boundary condition of Problem (\ref{j}) that
\begin{align*}
\gamma=-\varepsilon\displaystyle\lim_{\eta\searrow a}\phi'(-f(\eta))f'(\eta)\quad {\rm when}\ \varepsilon>0.
\end{align*}

We next prove a collection of properties of $f$ that are both need in the proof of Theorem \ref{tj} and useful later.
First, we prove the monotonicity of $f$.
\begin{lemma}\label{mono}
Suppose $\varepsilon>0$. If $f$ satisfies (\ref{j}), then $f'(\eta)<0$ for all $\eta\ne a$.
\end{lemma}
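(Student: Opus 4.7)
The plan is to treat the two open subintervals $(0,a)$ and $(a,\infty)$ separately, on each recasting the equation in (\ref{j}) as a second-order ODE in normal form and invoking uniqueness of the initial-value problem together with the observation that constants are themselves solutions.

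On $(0,a)$ we have $f>0$, so by (\ref{phi}) $\phi'(f)>0$. Expanding $[\phi'(f)f']' = \phi''(f)(f')^2 + \phi'(f)f''$, the first equation of (\ref{j}) rewrites as
\begin{align*}
f'' = -\frac{\phi''(f)(f')^2 + \tfrac{1}{2}\eta f'}{\phi'(f)},
\end{align*}
whose right-hand side is locally Lipschitz in $(f,f')$ on the open set $\{f>0\}$ because $\phi\in C^2$ and $\phi'$ stays bounded away from zero on compact subsets of that set. Hence Picard-type uniqueness holds at every $\eta_0\in(0,a)$. Now any positive constant trivially solves this ODE, so if there were some $\eta_0\in(0,a)$ with $f'(\eta_0)=0$, then $f$ and the constant function $g\equiv f(\eta_0)$ would agree on the Cauchy data $(f(\eta_0),0)$ at $\eta_0$ and must therefore coincide on a neighbourhood of $\eta_0$. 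Thus $\{f'=0\}\cap(0,a)$ is open; it is also closed in $(0,a)$ since the divergence form of (\ref{j}) makes $\phi'(f)f'$ absolutely continuous while $\phi'(f)$ is continuous and strictly positive on $(0,a)$, so $f'$ itself is continuous. By connectedness the set is either empty or all of $(0,a)$, and the latter forces $f$ to be constant, contradicting $\lim_{\eta\to 0^+}f(\eta)=U_0>0=\lim_{\eta\nearrow a}f(\eta)$. Hence $f'\neq 0$ on $(0,a)$, and continuity together with $f(0)>f(a)$ fixes its sign as negative.

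The argument on $(a,\infty)$ is entirely parallel: $f<0$ there yields $\phi'(-f)>0$, the second equation of (\ref{j}) takes the same normal form with a locally Lipschitz right-hand side on $\{f<0\}$, constants again solve it, and any interior zero of $f'$ would force $f$ to be constant on $(a,\infty)$, which is incompatible with $\lim_{\eta\searrow a}f(\eta)=0\neq -V_0 =\lim_{\eta\to\infty}f(\eta)$. Continuity and the sign of the endpoint difference then give $f'<0$ throughout $(a,\infty)$.

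I do not anticipate a serious obstacle; the whole argument rests on local-Lipschitz uniqueness of the rewritten ODE away from $\{f=0\}$, which the hypotheses $\phi\in C^2$ and $\phi'>0$ off the origin guarantee. The one point worth checking with care is the continuity of $f'$ used to close $\{f'=0\}$, but that is immediate from the divergence form of (\ref{j}) combined with the strict positivity of $\phi'(f)$ on each subinterval. The degeneracy $\phi'(0)=0$ is precisely what allows $\phi'(f)f'$ to approach the nonzero limit $-\gamma$ as $\eta\nearrow a$ without $f'$ itself being bounded, which is entirely consistent with the statement excluding $\eta=a$ from the monotonicity claim.
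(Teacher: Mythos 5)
Your proof is correct and follows essentially the same route as the paper: both arguments compare $f$ with the constant solution through the Cauchy data $(f(\eta_0),0)$ and use uniqueness for the normal-form ODE (valid since $\phi'(f)\neq 0$ away from the free boundary) to force $f$ to be constant, contradicting the boundary values in (\ref{j}). Your open-closed connectedness argument just makes explicit the local-to-global continuation step that the paper invokes tersely via ``Picard's theorem and the uniqueness.''
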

\begin{proof}Suppose $f$ is not monotonic, then there exists $\eta_0\ne a$ such that $f'(\eta_0)=0$, denote $f_0:=f(\eta_0)\ne0$. Then defining the function by $g:\mathbb{R}\rightarrow\mathbb{R}^+$
\begin{align*}
g(\eta)=f_0>0,\quad {\rm for}\ {\rm all}\ \eta\in\mathbb{R},
\end{align*}
we have that $g$ satisfies
\begin{align*}
-\frac{1}{2}\eta g'(\eta)=[\phi'(g(\eta))g'(\eta)]'\\
g(\eta_0)=f_0,\quad g'(\eta_0)=0,
\end{align*}
but also
\begin{align*}
-\frac{1}{2}\eta f'(\eta)=[\phi'(f(\eta))f'(\eta)]'\\
f(\eta_0)=f_0,\quad f'(\eta_0)=0.
\end{align*}
By Picard's theorem and the uniqueness, it follows that $f=g$ either for all $\eta<a$ or $\eta>a$, depending on whether $f(\eta_0)>0$ or $f(\eta_0)<0$. But this contradict the boundary conditions in (\ref{j}), so we know that $f$ must be monotonically decreasing.
\end{proof}

We prove next that $\gamma$ defined in (\ref{gam}) is strictly positive when $\varepsilon>0$. In fact, $\gamma$ is also strictly positive when $\varepsilon=0$, see Corollary \ref{gam0}.
\begin{lemma}
Suppose $\varepsilon>0$. Let $f$ be a solution of (\ref{j}), then $\gamma>0$.\label{ga2}
\end{lemma}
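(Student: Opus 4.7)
My plan is to argue by contradiction: if $\gamma=0$, then the quantity $\varepsilon\phi'(-f)f'$ on the outer region $\eta>a$ is forced by the ODE to have a sign inconsistent with its pointwise expression. First I would observe that $\gamma\ge 0$ is essentially immediate. By Lemma \ref{mono}, $f'<0$ on $(0,a)$, and the boundary conditions in (\ref{j}) together with monotonicity give $f(\eta)>0$ on $(0,a)$, so $\phi'(f(\eta))>0$ by (\ref{phi}). Hence $-\phi'(f(\eta))f'(\eta)>0$ on $(0,a)$ and passing to the limit yields $\gamma\ge 0$, so the task reduces to ruling out the degenerate case $\gamma=0$.

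For the main argument I would work on $(a,\infty)$ with $h(\eta):=\varepsilon\phi'(-f(\eta))f'(\eta)$. Two observations frame the contradiction. First, by the interface condition in (\ref{j}), $\lim_{\eta\searrow a}h(\eta)=-\gamma$, so $h$ extends continuously to $[a,\infty)$ with $h(a)=-\gamma$. Second, the ODE on $(a,\infty)$ reads $h'(\eta)=-\tfrac{1}{2}\eta f'(\eta)$, and since $f'<0$ by Lemma \ref{mono} and $\eta>a>0$, we have $h'>0$, so $h$ is strictly increasing on $(a,\infty)$. On the other hand, Lemma \ref{mono} together with the boundary condition $\lim_{\eta\searrow a}f(\eta)=0$ gives $f(\eta)<0$ for $\eta>a$, whence $-f(\eta)>0$ and $\phi'(-f(\eta))>0$ by (\ref{phi}); combined with $\varepsilon>0$ and $f'<0$, this forces $h(\eta)<0$ pointwise on $(a,\infty)$.

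The contradiction is then immediate: if $\gamma=0$, then $h(a)=0$ and strict monotonicity yields $h(\eta)>0$ for $\eta>a$, conflicting with $h<0$ just established. Therefore $\gamma>0$. I do not anticipate any real obstacle here; the continuous extension of $h$ up to $\eta=a$ is built into (\ref{j}) via the existence of the limit $\lim_{\eta\searrow a}\phi'(-f(\eta))f'(\eta)$, and all the strict inequalities descend directly from (\ref{phi}) and Lemma \ref{mono}. The same line of reasoning should also adapt to the $\varepsilon=0$ case needed in Corollary \ref{gam0}, although there the interface flux condition degenerates and one has to read $\gamma$ off the inner ODE on $(0,a)$ instead.
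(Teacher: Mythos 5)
Your proof is correct and rests on the same mechanism as the paper's: the sign of the flux $\varepsilon\phi'(-f)f'$ on $(a,\infty)$ forced by the ODE and Lemma \ref{mono}. The paper integrates the equation from $a$ to $\eta$ and gets the contradiction for all $\gamma\le 0$ at once, whereas you split off $\gamma\ge0$ from the inner region and then use the differential form $h'=-\tfrac12\eta f'>0$ to exclude $\gamma=0$; this is only a cosmetic repackaging of the same argument.
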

\begin{proof}If $\gamma\le0$, then integrating the equation for $\eta>a$ in (\ref{j}) from $a$ to $\eta$ yields
\begin{align}
-\frac{1}{2}\int_a^\eta sf'(s){\rm d}s=\varepsilon\phi'(-f(\eta))f'(\eta)+\gamma.\label{ga1}
\end{align}
The left-hand side of (\ref{ga1}) is positive since $f'<0$ by Lemma \ref{mono} whereas the right-hand side of (\ref{ga1}) is negative since $f'<0$ and $\gamma\le0$. Therefore, it follows that $\gamma>0$ by the contradiction.
\end{proof}

Now we prove some further properties of $f'$.

\begin{lemma}
Suppose $\varepsilon>0$. If $f$ satisfies (\ref{j}), then for $\eta>a$, $f'$ is monotonically increasing in $\eta$.
\end{lemma}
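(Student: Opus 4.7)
The plan is to derive an explicit formula for $f''(\eta)$ on $(a,\infty)$ by differentiating the ODE in (\ref{j}) for $\eta>a$, and then read off the sign of $f''$ from the signs of the individual ingredients. Since the monotonicity statement is exactly $f''\ge 0$ (in fact, strict positivity), a pointwise formula is the most direct route.

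Concretely, I start from
\begin{align*}
-\tfrac{1}{2}\eta f'(\eta)=[\varepsilon\phi'(-f(\eta))f'(\eta)]', \qquad \eta>a,
\end{align*}
expand the right-hand side using the product and chain rules,
\begin{align*}
[\varepsilon\phi'(-f)f']'=-\varepsilon\phi''(-f)(f')^2+\varepsilon\phi'(-f)f'',
\end{align*}
and, using that $-f(\eta)>0$ strictly for $\eta>a$ (so that $\phi'(-f(\eta))>0$ by (\ref{phi})), solve for $f''$:
\begin{align*}
f''(\eta)=\frac{1}{\varepsilon\phi'(-f(\eta))}\left[-\tfrac{1}{2}\eta f'(\eta)+\varepsilon\phi''(-f(\eta))(f'(\eta))^{2}\right].
\end{align*}
Now I check the sign of each piece. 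By Lemma \ref{mono}, $f'(\eta)<0$ for all $\eta\ne a$, and since $\eta>a\ge 0$, the term $-\tfrac12\eta f'(\eta)$ is strictly positive. The second term in the bracket is non-negative: $\phi'\in C^{1}(\mathbb{R})$ is strictly increasing by (\ref{phi}), hence $\phi''(s)\ge 0$ for all $s$, and $(f')^{2}\ge 0$. Since $\varepsilon\phi'(-f(\eta))>0$ for $\eta>a$, the right-hand side is strictly positive, giving $f''(\eta)>0$ throughout $(a,\infty)$, which is the desired monotonicity of $f'$.

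The only subtlety I anticipate is the regularity needed to treat $f''$ classically: I am implicitly using that $f\in C^{2}((a,\infty))$ rather than merely a weak solution of (\ref{j}). This is handled by a standard bootstrap. Since $\phi'(-f)>0$ on $(a,\infty)$, the ODE rearranges to express $f''$ as a continuous function of $(\eta,f(\eta),f'(\eta))$, so from $f\in C^{1}$ one obtains $f\in C^{2}$ and then $C^{k}$ for as many derivatives as $\phi$ allows. Apart from this routine point, the argument is essentially the sign-chasing above, and the endpoint $\eta=a$ causes no issue because the claim is only for $\eta>a$, where $\phi'(-f(\eta))$ stays strictly positive.
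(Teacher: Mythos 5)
Your argument is correct and is essentially identical to the paper's proof: both expand $[\varepsilon\phi'(-f)f']'=-\varepsilon\phi''(-f)(f')^2+\varepsilon\phi'(-f)f''$ and conclude $f''>0$ from $\eta>a>0$, $f'<0$ (Lemma \ref{mono}), $\phi''\ge0$, and $\phi'(-f)>0$. The extra remark on $C^2$ regularity is a reasonable addition but not a difference in approach.
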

\begin{proof}The results follow from the equation for $f$ when $\eta>a$
\begin{align*}
-\frac{1}{2}\eta f'(\eta)=-\varepsilon\phi''(-f(\eta))(f'(\eta))^2+\varepsilon\phi'(-f(\eta))f''(\eta).
\end{align*}
The left-hand side is positive since $0<a<\eta$ and the first term of right-hand side is negative. Therefore $f''$ must be positive.
\end{proof}

\begin{lemma}\label{ld0}
Suppose $\varepsilon>0$. If $f$ satisfies (\ref{j}), then we have for $\eta>a$
\begin{align}
-f'(\eta)\le\displaystyle\frac{4\gamma}{\eta^2-a^2},
\label{fd}
\end{align}
and hence, in particular,
\begin{align}
\displaystyle\lim_{\eta\rightarrow\infty}f'(\eta)=0.\label{f0}
\end{align}
\end{lemma}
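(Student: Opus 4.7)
The plan is to start from the integrated ODE identity already derived in the proof of Lemma \ref{ga2}. Integrating the equation $-\tfrac{1}{2}\eta f'(\eta) = [\varepsilon\phi'(-f(\eta))f'(\eta)]'$ from $a$ to $\eta$, and using the free-boundary condition in (\ref{j}) together with (\ref{gam}) to identify $\lim_{s\searrow a}\varepsilon\phi'(-f(s))f'(s) = -\gamma$, one obtains
\begin{equation*}
-\tfrac{1}{2}\int_a^\eta s f'(s)\,\mathrm{d}s \;=\; \varepsilon\phi'(-f(\eta))f'(\eta) + \gamma.
\end{equation*}
Since $f' < 0$ on $(a,\infty)$ by Lemma \ref{mono}, both $-sf'(s)$ and $-\varepsilon\phi'(-f(\eta))f'(\eta)$ are non-negative, so rearranging yields
\begin{equation*}
\tfrac{1}{2}\int_a^\eta s(-f'(s))\,\mathrm{d}s \;+\; \varepsilon\phi'(-f(\eta))(-f'(\eta)) \;=\; \gamma,
\end{equation*}
and in particular the first summand is bounded by $\gamma$, i.e.\ $\int_a^\eta s(-f'(s))\,\mathrm{d}s \le 2\gamma$.

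Next I would exploit the previous lemma, which says $f''>0$ on $(a,\infty)$, so that $-f'$ is decreasing on that interval. For $a < s \le \eta$ this gives $-f'(s) \ge -f'(\eta)$, whence
\begin{equation*}
\int_a^\eta s(-f'(s))\,\mathrm{d}s \;\ge\; (-f'(\eta))\int_a^\eta s\,\mathrm{d}s \;=\; \tfrac{1}{2}(\eta^2 - a^2)(-f'(\eta)).
\end{equation*}
Combining this with the upper bound $\int_a^\eta s(-f'(s))\,\mathrm{d}s \le 2\gamma$ produces $(\eta^2-a^2)(-f'(\eta)) \le 4\gamma$, which is exactly the inequality (\ref{fd}).

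Finally, (\ref{f0}) is immediate: since $f'<0$ on $(a,\infty)$ and (\ref{fd}) forces $-f'(\eta) \le 4\gamma/(\eta^2-a^2) \to 0$ as $\eta\to\infty$, we get $\lim_{\eta\to\infty} f'(\eta) = 0$. The argument is essentially bookkeeping once the correct sign rearrangement of the integrated equation is spotted; the only substantive input beyond that is the monotonicity $f''>0$ from the preceding lemma, which is what converts an integrated estimate into a pointwise estimate on $-f'(\eta)$. I do not anticipate any real obstacle, provided one is careful that the boundary term at $s=a$ in the integration evaluates to $-\gamma$ rather than $\gamma$.
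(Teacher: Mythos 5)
Your proposal is correct and follows essentially the same route as the paper: integrate the equation from $a$ to $\eta$, bound the right-hand side by $\gamma$ using the sign of $f'$, and use the convexity $f''>0$ from the preceding lemma to bound $\int_a^\eta s(-f'(s))\,\mathrm{d}s$ below by $\tfrac{1}{2}(\eta^2-a^2)(-f'(\eta))$. The only cosmetic difference is that the paper deduces the limit (\ref{f0}) via the intermediate bound $-f'(\eta)<4\gamma/(2\eta-1)$ for $\eta>a+1$, whereas you pass to the limit directly from (\ref{fd}); both are equivalent.
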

\begin{proof}Integrating the equation of $f$ for $\eta>a$ from $a$ to $\eta$, we have
\begin{align*}
-\frac{1}{4}f'(\eta)(\eta^2-a^2)\le-\frac{1}{2}\int_a^\eta s f'(s){\rm d}s=\varepsilon\phi'(-f(\eta))f'(\eta)+\gamma\le\gamma,
\end{align*}
since $f'<0$ is increasing and the right-hand side is positive.

Therefore we obtain
\begin{align*}
-f'(\eta)\le\displaystyle\frac{4\gamma}{\eta^2-a^2}.
\end{align*}
If we choose $\eta>a+1$ then
\begin{align}
-f'(\eta)<\frac{4\gamma}{2\eta-1},\label{fpr1}
\end{align}
which vanishes as $\eta\rightarrow\infty$.
\end{proof}

Similarly, we can prove that if we have a self-similar solution of (\ref{jjj}) when $\varepsilon=0$, then it is a weak solution of (\ref{c}) in the sense of Definition \ref{au1}, the existence of the self-similar solution is proved by using one-parameter shooting in Theorem \ref{t01}.
\begin{theorem}
The unique weak solution $w$ of problem (\ref{c}) with $\varepsilon=0$ has a self-similar form. There exists a function $f:\mathbb{R^+}\mapsto\mathbb{R^+}$ and a constant $a\in\mathbb{R^+}$ such that
\begin{align*}
w(x,t)=f\left(\frac{x}{\sqrt{t}}\right),\ (x,t)\in S_T\ {\rm and}\ \beta(t)=a\sqrt{t},\ t\in[0,T].
\end{align*}
 Denote $\eta=\displaystyle\frac{x}{\sqrt{t}}$, $f$ satisfies the system
\begin{align}
\left\{\begin{aligned}
  &-\frac{1}{2}\eta f'(\eta)=[\phi'(f(\eta))f'(\eta)]',\quad &&{\rm if}\ \eta<a,\\
  &f(\eta)=-V_0,\quad &&{\rm if}\ \eta>a,\\
  &f(0)=U_0,\\
  &\lim_{\eta\nearrow a}f(\eta)=0,\\
  &\lim_{\eta\nearrow a}\phi'(f(\eta))f'(\eta)=-\frac{aV_0}{2},\\
\end{aligned}\right.
\label{jjj}
\end{align}
where a prime denotes differentiation with respect to $\eta$.
\label{tjj}
\end{theorem}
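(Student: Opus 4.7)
The plan is to mirror the three-step structure already employed in the proof of Theorem \ref{tj}. Uniqueness of the weak solution of (\ref{c}) is recorded in \cite[Lemma 4.7]{selfnon}, so what remains is (i) showing that a function $f$ satisfying (\ref{jjj}) exists, which I would defer to the one-parameter shooting result Theorem \ref{t01}; and (ii) verifying that any such $f$ induces, via $w(x,t) = f(x/\sqrt{t})$ on $\eta\le a$ together with $w\equiv -V_0$ on $\eta>a$, a weak solution of (\ref{c}) in the sense of Definition \ref{au1}. Before doing (ii) I would motivate the interface condition $\lim_{\eta\nearrow a}\phi'(f)f' = -aV_0/2$: with $\beta(t)=a\sqrt{t}$ one has $\beta'(t)=a/(2\sqrt{t})$, and substituting into the Stefan-type relation $V_0\beta'(t) = -\lim_{x\nearrow\beta(t)}\phi(u)_x$ from Corollary \ref{clfb} and rewriting $\phi(u)_x = \phi'(f)f'/\sqrt{t}$ gives precisely the condition in (\ref{jjj}).

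For Definition \ref{au1}(i) the bound $-V_0\le f\le U_0$ makes $w\in L^\infty(S_T)$ immediate. For (ii) I would observe that, because $\varepsilon=0$, $\mathcal{D}(w)$ reduces to $\phi(f(\eta))$ for $\eta<a$ and vanishes identically for $\eta>a$; continuity across the free boundary follows from $\phi(f(a))=\phi(0)=0$, so no jump is created. The $L^2(0,T;W^{1,2})$ control amounts to bounding
\begin{align*}
\iint_{S_T}|\mathcal{D}(w)_x|^2\,dx\,dt = \int_0^T\!\!\int_0^{\beta(t)}\frac{|\phi'(f(\eta))f'(\eta)|^2}{t}\,dx\,dt,
\end{align*}
and after the change of variables $\eta = x/\sqrt{t}$ this is controlled by the analogues for $\varepsilon=0$ of Lemmas \ref{mono}--\ref{ld0} (monotonicity and bounds on $f'$ away from $0$ and $a$), combined with the integrability assumption $\int_0^1\phi'(f)/f\,df<\infty$ from (\ref{phi1}); the detailed estimates mirror those referenced in \cite{thesis}.

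For (iii) I would substitute $w(x,t)=f(x/\sqrt{t})$ into the integral identity and split the $x$-integration at $x=\beta(t)$. On $x<\beta(t)$, integration by parts in $x$, together with the self-similarity ODE $-\frac12\eta f'(\eta) = [\phi'(f)f']'$, produces the usual cancellation with $\iint w\xi_t$ up to a boundary contribution at $x=\beta(t)$ equal to $\lim_{\eta\nearrow a}\phi'(f)f'\cdot \xi(\beta(t),t)/\sqrt{t}$. On $x>\beta(t)$, since $w\equiv -V_0$ and $\mathcal{D}(w)\equiv 0$, the time-derivative term contributes only the moving-boundary piece $-V_0\int_0^T \xi(\beta(t),t)\beta'(t)\,dt$ after integration by parts in $t$ (using $\xi(\cdot,T)=0$ and matching the initial datum $w_0=-V_0$). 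Equating the two contributions reduces the whole identity to $\lim_{\eta\nearrow a}\phi'(f)f' = -aV_0/2$, which is exactly the free-boundary condition in (\ref{jjj}).

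The main obstacle I anticipate is the careful accounting at the free boundary in (iii): because $\mathcal{D}(w)$ has zero flux on the right of $\beta(t)$ while $w$ itself jumps only in derivative there (no jump in $w$, since $f(a)=0$ but $w\equiv -V_0$ on the right), one must absorb the entire Rankine--Hugoniot-type term coming from the $V_0$ on the right into the single-sided limit of $\phi'(f)f'$ on the left, and track how the factor $\beta'(t)=a/(2\sqrt t)$ produced by the time integration by parts exactly matches the factor of $1/\sqrt t$ produced by the chain rule in $\phi(u)_x$. A subsidiary technical point is the $L^2$ regularity of $\phi(f)_x$ near $\eta=0$, which, via the change of variables, hinges on the first integrability condition in (\ref{phi1}) as in the $\varepsilon>0$ case.
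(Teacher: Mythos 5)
Your proposal is correct and follows essentially the same route as the paper, which likewise reduces Theorem \ref{tjj} to the uniqueness result of \cite{selfnon}, the verification that a solution of (\ref{jjj}) yields a weak solution in the sense of Definition \ref{au1} (argued as in Theorem \ref{tj}, restricted to $\eta<a$ since $\mathcal{D}(w)\equiv 0$ where $w=-V_0$), and the one-parameter shooting existence result of Theorem \ref{t01}; in fact you supply more detail than the paper does on the Rankine--Hugoniot accounting that produces $\gamma=aV_0/2$. One small wording slip: your parenthetical ``no jump in $w$'' is inconsistent with the rest of your argument --- $w$ does jump from $0$ to $-V_0$ across $\beta(t)$, and it is exactly this jump that generates the $-V_0\int_0^T\xi(\beta(t),t)\beta'(t)\,{\rm d}t$ term you correctly match against the one-sided flux limit.
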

\begin{proof}When $\varepsilon=0$, we can only consider $f$ for $\eta<a$, since $f(\eta)=-V_0$ for $\eta>a$. The proof of the fact that a solution of (\ref{jjj}) yields a weak solution of (\ref{c}) is similar to the proof of Theorem \ref{tj} and the existence of solution for problem (\ref{jjj}) is proved in Theorem \ref{t01} below by a shooting method.\end{proof}

The following corollary follows from the fact that $\gamma=\displaystyle\frac{aV_0}{2}$ when $\varepsilon=0$, which is a direct consequence of the free boundary conditions in (\ref{jjj}).
\begin{corollary}
Suppose $\varepsilon=0$. Let $f$ be a solution of (\ref{jjj}), then $\gamma=\displaystyle\frac{aV_0}{2}$ is positive.\label{gam0}
\end{corollary}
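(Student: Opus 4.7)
The plan is to read off the equality $\gamma=aV_0/2$ directly from the free-boundary condition in (\ref{jjj}) and then to verify that both $a$ and $V_0$ are strictly positive, so that the ratio is positive.

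First, I would compare the definition (\ref{gam}) of $\gamma$, namely
\[
\gamma:=-\lim_{\eta\nearrow a}\phi'(f(\eta))f'(\eta),
\]
with the last interface condition imposed in the statement of Problem (\ref{jjj}),
\[
\lim_{\eta\nearrow a}\phi'(f(\eta))f'(\eta)=-\frac{aV_0}{2}.
\]
These two identities combine to give $\gamma=aV_0/2$ with no additional computation; this step is purely bookkeeping against the formulation of (\ref{jjj}).

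For the positivity of $\gamma$, $V_0>0$ is part of the data, so it suffices to show that $a>0$. The boundary conditions in (\ref{jjj}) require $f(0)=U_0>0$ and $\lim_{\eta\nearrow a}f(\eta)=0$. Since $f$ is continuous on $[0,a]$, being a solution of the ODE on $(0,a)$ with the prescribed left-endpoint value, the case $a=0$ would force $U_0=0$, contradicting the hypothesis $U_0>0$. Hence $a>0$, and therefore $\gamma=aV_0/2>0$.

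There is really no obstacle in this argument: the equality is just a matching of definitions, and the sign assertion reduces to ruling out the degenerate case $a=0$ via the prescribed boundary data. The corollary is essentially a consistency check that the free-boundary formulation in (\ref{jjj}) automatically determines $\gamma$ in terms of $a$, which is the structural fact that makes the subsequent shooting argument in Section 4 only one-parameter rather than two-parameter.
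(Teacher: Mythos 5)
Your proposal is correct and takes essentially the same route as the paper, which states the corollary as a direct consequence of matching the definition (\ref{gam}) of $\gamma$ with the interface condition in (\ref{jjj}), together with $V_0>0$ and $a>0$. Your additional step ruling out $a=0$ via $f(0)=U_0>0$ and $\lim_{\eta\nearrow a}f(\eta)=0$ is a harmless fleshing-out of the positivity of $a$, which in the paper is already built into the formulation of Theorem \ref{tjj} (where $a\in\mathbb{R}^+$).
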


\section{Half-line case: self-similar solutions with $\varepsilon>0$}
In this section, we will prove the existence of the self-similar solution by splitting the proof into two parts: $\eta< a$ where $f(\eta)>0$, and $\eta> a$ where $f(\eta)<0$. We will discuss the existence and properties of $\displaystyle\lim_{\eta\rightarrow 0}f(\eta)$ and $\displaystyle\lim_{\eta\rightarrow \infty}f(\eta)$ and then use a two parameter shooting argument with parameters $a$ and $\gamma$.

First we consider $f$ that satisfies the equation
\begin{align}
-\frac{1}{2}\eta f'(\eta)=[\phi'(f(\eta))f'(\eta)]',\quad  0<\eta<a.
\label{k}
\end{align}
At the boundaries we seek a solution that satisfies
\begin{align}
&f(0)=U_0,\label{kb2}\\
&\displaystyle\lim_{\eta\nearrow a}f(\eta)=0,\quad \displaystyle\lim_{\eta\nearrow a}\phi'(f(\eta))f'(\eta)=-\gamma.\label{kb1}
\end{align}

\subsection{Solution in left-neighbourhood of $\eta=a$}
We start by proving the local existence of a positive solution of (\ref{k}) in a left-neighbourhood of $\eta=a$, which satisfies the boundary conditions (\ref{kb1}).

\begin{lemma}
Suppose $a>0$ and choose $ a_1< a$. If $f$ satisfies (\ref{k}) and the boundary conditions (\ref{kb1}), then
\begin{align}
\int_0^{f(a_1)}\frac{\phi'(f)}{\frac{2\gamma}{a}+f}{\rm d}f\le \frac{1}{2}a^2.
\label{pf1}
\end{align}
\end{lemma}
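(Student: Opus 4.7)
The plan is to obtain the bound by integrating the ODE (\ref{k}) from $\eta$ to $a$, applying the boundary conditions (\ref{kb1}), exploiting that $s\le a$ on the interval, and then performing a change of variables.

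First, I would integrate (\ref{k}) over $[\eta,a]$ for $\eta\in(0,a)$. Using $f(a)=0$ and the flux condition $\phi'(f(s))f'(s)\to -\gamma$ as $s\nearrow a$, this gives
\begin{align*}
\phi'(f(\eta))f'(\eta)=-\gamma+\tfrac12\int_\eta^a sf'(s)\,{\rm d}s.
\end{align*}
The next step is to establish that $f$ is strictly decreasing on $(0,a)$: the same Picard/unique-continuation argument as in Lemma \ref{mono} applies, since an interior zero of $f'$ at a point where $f>0$ would force $f$ to coincide with the constant solution and contradict $f(a)=0=\lim_{\eta\nearrow a}f(\eta)$ with $f(0)=U_0>0$. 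Once $f'\le 0$ is in hand, the bound $s\le a$ yields $sf'(s)\ge af'(s)$, hence $\int_\eta^a sf'(s)\,{\rm d}s\ge a(f(a)-f(\eta))=-af(\eta)$, and therefore
\begin{align*}
\phi'(f(\eta))f'(\eta)\ge -\gamma-\tfrac{a}{2}f(\eta)=-\tfrac{a}{2}\Bigl(\tfrac{2\gamma}{a}+f(\eta)\Bigr).
\end{align*}

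Since $\gamma>0$ (a parameter taken positive as in (\ref{kb1}); cf.\ Lemma \ref{ga2}) and $f\ge 0$, the denominator $\tfrac{2\gamma}{a}+f(\eta)$ is strictly positive, so dividing and negating gives
\begin{align*}
\frac{-\phi'(f(\eta))f'(\eta)}{\tfrac{2\gamma}{a}+f(\eta)}\le\tfrac{a}{2}.
\end{align*}
Integrating this from $a_1$ to $a$ and performing the substitution $f=f(\eta)$ (valid by the strict monotonicity just established), with endpoints $f(a_1)\mapsto 0$, produces
\begin{align*}
\int_0^{f(a_1)}\frac{\phi'(f)}{\tfrac{2\gamma}{a}+f}\,{\rm d}f\le \tfrac{a(a-a_1)}{2}\le\tfrac{a^2}{2},
\end{align*}
which is precisely (\ref{pf1}).

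The only substantive point requiring care is the justification that $f'<0$ throughout $(0,a)$, needed both for the comparison $sf'(s)\ge af'(s)$ and for the change of variables. Once this is handled by the Picard argument analogous to Lemma \ref{mono}, everything else is a clean integration and substitution.
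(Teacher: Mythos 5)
Your proof is correct and follows essentially the same route as the paper: integrate (\ref{k}) from $\eta$ to $a$, use $f'<0$ and $s\le a$ to bound $-\phi'(f)f'$ by $\tfrac{a}{2}\bigl(\tfrac{2\gamma}{a}+f\bigr)$, then divide and change variables. The only cosmetic difference is that the paper integrates over $[a_1,a_2]$ with $a_2<a$ and lets $a_2\nearrow a$, whereas you integrate directly to $a$; both are fine, and your explicit attention to the monotonicity needed for the substitution is a point the paper simply delegates to Lemma \ref{mono}.
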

\begin{proof}This follows from a similar argument to that in the proof of \cite[Lemma 3]{simil}.
Integration of (\ref{k}) from $\eta$ to $a$ gives 
\begin{align*}
|\phi'(f(\eta))f'(\eta)|\le\gamma-\frac{1}{2}a\int_\eta^af'(s){\rm d}s,
\end{align*}
since $a>0$ and $f'<0$ by Lemma \ref{mono}. Thus for any $a_2$ such that $a_1<a_2<a$, we have
\begin{align*}
\int_f(a_2)^{f(a_1)}\frac{\phi'(f)}{\frac{2\gamma}{a}+f}{\rm d}f\le \frac{1}{2}(a_2-a_1)\le\frac{1}{2}a^2.
\end{align*}
Then the result follows by letting $a_2$ tends to $a$.

Note that here we have an extra positive term $\frac{2\gamma}{a}$ on the denominator, which make the proof easier than in \cite{simil}, which corresponds to $\gamma=0$. \end{proof}

In the following, we prove the existence and uniqueness of the local solution by using a method inspired by \cite[Lemmas 4 and 5]{simil}.
\begin{lemma}\label{lkb1}
For given $a$ and $\gamma$, there exists  $\delta>0$ such that for $\eta\in(a-\delta,a)$, equation (\ref{k}) has a unique solution which is positive and satisfies the boundary condition (\ref{kb1}).
\end{lemma}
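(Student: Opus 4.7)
The plan is to remove the singularity at $\eta = a$ (where $\phi'(f(\eta))\to 0$) by inverting the roles of independent and dependent variables, thereby reducing (\ref{k})--(\ref{kb1}) to a regular initial value problem at $f = 0$ where Picard's theorem applies. The key enabling fact is that $\gamma > 0$ (Lemma \ref{ga2}), which keeps the denominator in the reduced system bounded away from zero near the boundary.

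\textbf{Step 1 (reformulation).} Since any solution $f$ of (\ref{k}) satisfying (\ref{kb1}) is strictly decreasing by Lemma \ref{mono}, it is invertible on a left-neighbourhood of $a$. Regard $\eta$ as a function of $f$ for $f\in[0,f_0]$, and introduce the auxiliary quantity
\begin{align*}
q(f) := -\phi'(f)\,f'(\eta(f)).
\end{align*}
The boundary conditions (\ref{kb1}) translate into $\eta(0)=a$ and $q(0)=\gamma$. Differentiating $\eta\circ f=\mathrm{id}$ gives $\eta'(f)=1/f'(\eta)=-\phi'(f)/q(f)$, while from (\ref{k}) one gets $\frac{d}{d\eta}[\phi'(f)f'(\eta)] = -\frac{1}{2}\eta f'(\eta)$, and dividing by $f'(\eta)$ yields $q'(f)=\eta(f)/2$. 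The pair $(\eta,q)$ therefore satisfies
\begin{align*}
\eta'(f) = -\frac{\phi'(f)}{q(f)},\qquad q'(f) = \frac{\eta(f)}{2},\qquad (\eta(0),q(0)) = (a,\gamma).
\end{align*}

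\textbf{Step 2 (Picard).} Since $\phi'\in C^1$ with $\phi'(0)=0$, and $\gamma>0$, the right-hand side is $C^1$ in $(\eta,q)$ in a neighbourhood of $(a,\gamma)$ with $q$ bounded away from zero, and continuous in $f$. Picard's theorem therefore yields a unique $C^1$ solution $(\eta,q)$ on some interval $[0,f_0]$, along which $q(f)>0$ by continuity.

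\textbf{Step 3 (inversion and conclusion).} For $f\in(0,f_0]$, the condition (\ref{phi}) gives $\phi'(f)>0$, so $\eta'(f)=-\phi'(f)/q(f)<0$. Hence $\eta\colon[0,f_0]\to[\eta(f_0),a]$ is a strictly decreasing bijection. Its inverse $\eta\mapsto f(\eta)$ is defined on $(a-\delta,a]$ with $\delta:=a-\eta(f_0)>0$, is strictly positive on $(a-\delta,a)$ by construction, and an unwinding of the substitution shows that it satisfies both (\ref{k}) and the boundary conditions (\ref{kb1}). Uniqueness in the original variables follows from Picard uniqueness for the reduced system: any other solution of (\ref{k})--(\ref{kb1}) produces, via the same substitution, a solution of the reduced IVP with the same initial data.

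The main obstacle is the bookkeeping in Step 1, namely verifying that the singular boundary-value formulation in the $\eta$-variable is genuinely equivalent to the regular initial-value formulation in the $f$-variable (in particular, that the chosen auxiliary variable $q$ indeed has the prescribed limit $\gamma$ at $f=0$, encoding the second part of (\ref{kb1})). A secondary subtlety is that $\eta'(0)=0$, so the inversion in Step 3 cannot be obtained from the inverse function theorem at $f=0$; instead one uses the strict monotonicity $\eta'(f)<0$ on $(0,f_0]$, which in turn relies on the hypothesis $\phi'(f)>0$ for $f>0$ from (\ref{phi}).
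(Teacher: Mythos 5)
Your proof is correct, and it rests on the same change of variables as the paper --- treating $\eta$ as a function of $f$ near the free boundary, with $\gamma>0$ keeping the degenerate factor under control --- but the existence/uniqueness mechanism is genuinely different. The paper writes $\eta=\sigma(f)$, converts the problem into the integral equation (\ref{ba6}) for $\tau(f)=1-\sigma(f)/a$, and runs a Banach contraction argument on a ball of continuous functions, with smallness conditions on $\mu$ expressed through $\int_0^\mu\phi'(\theta)\big(\theta+\tfrac{4\gamma}{a}\big)^{-1}{\rm d}\theta$. You instead introduce the flux $q(f)=-\phi'(f)f'$ and observe that $(\eta,q)$ solves the regular first-order system $\eta'=-\phi'(f)/q$, $q'=\eta/2$ with data $(a,\gamma)$; since $\phi'\in C^1$ and $q(0)=\gamma>0$, Picard applies directly. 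In fact the two formulations are the same equation in disguise, since $2q(f)=\int_0^f\sigma(s)\,{\rm d}s+2\gamma$; your version packages the integral as a second state variable, which makes the desingularisation transparent and avoids the explicit contraction estimates. What the paper's formulation buys is the explicit integral representation of $\tau$, which is reused verbatim in Lemmas \ref{lbg}(ii), \ref{lba}(iii) and \ref{lfc} to obtain Lipschitz continuity of $b(a,\gamma)$ in $a$ and $\gamma$; your route would give continuity of the local solution in $(a,\gamma)$ from smooth dependence on initial data, but not those quantitative estimates without further work. One small point to make explicit: the uniqueness step needs every solution of (\ref{k}), (\ref{kb1}) to be strictly monotone near $a$ before it can be fed into the substitution; this follows directly from $-\phi'(f(\eta))f'(\eta)\to\gamma>0$ together with $\phi'(f)>0$ for $f>0$, rather than from Lemma \ref{mono}, which is stated for solutions of the full problem (\ref{j}).
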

\begin{proof}It is convenient to start by supposing that such a solution exists in a left-neighbourhood of $\eta=a$. Integrating (\ref{k}) from $\eta$ to $a$, we have
\begin{align}
\frac{1}{f'(\eta)}=\frac{2\phi'(f)}{\int_{\eta}^asf'(s){\rm d}s-2\gamma}.\label{ba4}
\end{align}
Since $f$ is monotonic, with non-vanishing derivative, we can treat $\eta$ as a function of $f$, writing $\eta=\sigma(f)$. Then (\ref{ba4}) takes the form
\begin{align*}
\frac{{\rm d}\sigma}{{\rm d}f}=\frac{-2\phi'(f)}{\int_0^f\sigma(s){\rm d}s+2\gamma},
\end{align*}
and $\sigma(f)$ is a solution of this integro-differential equation which satisfies the initial condition $\sigma(0)=a$ and is defined and continuous on an interval $[0,\hat f]$ for some $\hat f>0$ and continuously differentiable on $(0,\hat f)$. An integration gives
\begin{align}
\sigma(f)=a-2\int_0^f\frac{\phi'(\theta)}{\int_0^\theta \sigma(s){\rm d}s+2\gamma}{\rm d}\theta,\label{ba5}
\end{align}
and if we set
\begin{align*}
\tau(f)=1-\frac{\sigma(f)}{a}=1-\frac{\eta}{a},
\end{align*}
then (\ref{ba5}) becomes
\begin{align}
\tau(f)=2a^{-2}\int_0^f\frac{\phi'(\theta)}{\int_0^\theta [1-\tau(s)]{\rm d}s+\frac{2\gamma}{a}}{\rm d}\theta.\label{ba6}
\end{align}
If the solution of (\ref{ba6}) is unique, the corresponding solution of equation (\ref{k}) is also unique.

Now we prove (\ref{ba6}) has a unique solution on $[0,\mu]$ for some $\mu>0$.
\begin{lemma}
There exists $\mu>0$ such that (\ref{ba6}) has a unique continuous solution in $0\le f\le\mu$, which is such that $\t(0)=0$ and $\t(f)>0$ if $0<f\le\mu$.
\end{lemma}
\begin{proof}With $\mu$ to be chosen later, we denote by $X$ the set of continuous functions $\t(f)$ defined on $[0,\mu]$, satisfying $0\le\t(f)\le\frac{1}{2}$. We denote by $\Vert\centerdot\Vert$ the supremum norm on $X$. Then $X$ is a complete metric space. On $X$ we introduce the map
\begin{align*}
M(\t)(f)=2a^{-2}\int_0^f\frac{\phi'(\theta)}{\int_0^\theta [1-\tau(s)]{\rm d}s+\frac{2\gamma}{a}}{\rm d}\theta
\le4a^{-2}\int_0^\mu\frac{\phi'(\theta)}{\theta+\frac{4\gamma}{a}}{\rm d}\theta.
\end{align*}
It is clear that $M(\t)(f)$ is well-defined, non-negative and continuous. Moreover, $M(\t)(f)\le\frac{1}{2}$ if
\begin{align}
4a^{-2}\int_0^\mu\frac{\phi'(\theta)}{\theta+\frac{4\gamma}{a}}{\rm d}\theta\le\frac{1}{2}.\label{ba7}
\end{align}
Therefore, if $\mu$ is chosen small enough that (\ref{ba7}) is satisfied, $M$ maps $X$ into itself.

We also wish to ensure that $M$ is a contraction map. Let $\t_1,\t_2\in X$, we have
\begin{align*}
\Vert M(\t_1)-M(\t_2)\Vert\le&2a^{-2}\int_0^f\phi'(\theta)\frac{\int_0^\theta|\t_1(s)-\t_2(s)|{\rm d}s}{\left\{\int_0^\theta [1-\t_1(s)]{\rm d}s+\frac{2\gamma}{a}\right\}\left\{\int_0^\theta [1-\t_2(s)]{\rm d}s+\frac{2\gamma}{a}\right\}}\\ \le&8a^{-2}\int_0^\mu\phi'(\theta)\frac{\theta}{\left(\theta+\frac{4\gamma}{a}\right)^2}{\rm d}\theta\,\Vert\t_1-\t_2\Vert\\
\le&8a^{-2}\int_0^\mu\frac{\phi'(\theta)}{\theta+\frac{4\gamma}{a}}{\rm d}\theta\,\Vert\t_1-\t_2\Vert,
\end{align*}
and it follows that $M$ is a contraction map if
\begin{align*}
8a^{-2}\int_0^\mu\frac{\phi'(\theta)}{\theta+\frac{4\gamma}{a}}{\rm d}\theta<1.
\end{align*}
This constitutes our second restriction on $\mu$, it clearly implies the first one, (\ref{ba7}). The result now follows from the standard fixed-point principle \cite{line}. \end{proof}
This concludes the proof of Lemma \ref{lkb1}.\end{proof}

For any $a>0$, the unique positive solution $f(\eta)$, defined in a left-neighbourhood of $\eta=a$, which satisfies the boundary conditions (\ref{kb1}), may be uniquely continued backward as a function of $\eta$. By Lemma \ref{mono}, it will increase monotonically as $\eta$ decreases. There are then two possibilities, either the solution can be continued back to $\eta=0$, or else we have $f(\eta)\rightarrow\infty$ as $\eta$ decreases towards some non-negative value. We now show that the solution can indeed be continued back to $\eta=0$. Note that the following result uses the condition (\ref{phi1}).

\begin{lemma}
For any given $a,\gamma$, the unique local solution of equation (\ref{k}) in Lemma \ref{lkb1} can be continued back to $\eta=0$.
\label{lbw}
\end{lemma}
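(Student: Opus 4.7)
My plan is to rule out a blow-up of $f$ as $\eta$ decreases by combining the a priori estimate from (\ref{pf1}) with the divergence condition in (\ref{phi1}), and then to show that at any interior stopping point the solution admits a Picard--Lindel\"of continuation; the only remaining possibility will be $\eta=0$. Concretely, let $(\eta^\ast, a)$ be the maximal interval to which the local solution of Lemma \ref{lkb1} extends. By Lemma \ref{mono}, $f$ is strictly decreasing on $(\eta^\ast, a)$, so $f(\eta) \nearrow L \in (0,\infty]$ as $\eta \searrow \eta^\ast$, and the goal is to prove $\eta^\ast = 0$.

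The first step is to rule out $L = \infty$. Applying the argument that proves (\ref{pf1}) with $a_1 = \eta$ gives
\begin{align*}
\int_0^{f(\eta)} \frac{\phi'(f)}{\frac{2\gamma}{a} + f}\,{\rm d}f \le \frac{a^2}{2}
\end{align*}
for every $\eta \in (\eta^\ast,a)$. Sending $\eta \searrow \eta^\ast$ and using monotone convergence yields the same bound with $f(\eta)$ replaced by $L$. For $f \ge 1$ the integrand is bounded below by $\frac{1}{2\gamma/a + 1}\cdot \phi'(f)/f$, so the divergence $\int_1^\infty \phi'(f)/f\,{\rm d}f = \infty$ from (\ref{phi1}) would be contradicted were $L = \infty$; hence $L < \infty$.

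The second step rules out $\eta^\ast > 0$ once $L < \infty$. Integrating (\ref{k}) from $\eta$ to $a$ gives $\phi'(f(\eta))f'(\eta) = -\gamma + \tfrac{1}{2}\int_\eta^a s f'(s)\,{\rm d}s$, and since $\int_{\eta^\ast}^a |s f'(s)|\,{\rm d}s \le a\,L < \infty$, the right-hand side has a limit $g^\ast$ as $\eta \searrow \eta^\ast$. Because $\phi'$ is strictly increasing with $\phi'(0)=0$, $\phi'(L) > 0$, so $f'(\eta) \to g^\ast/\phi'(L)$ and $f$ extends continuously to $\eta^\ast$ with $f(\eta^\ast)=L > 0$. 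Recasting (\ref{k}) as the first-order system
\begin{align*}
f' = \frac{g}{\phi'(f)},\qquad g' = -\frac{\eta\, g}{2\phi'(f)},
\end{align*}
whose right-hand side is locally $C^1$ near $(L, g^\ast)$ since $\phi\in C^2$ and $\phi'(L) > 0$, Picard--Lindel\"of applied at $\eta^\ast$ yields a solution on $(\eta^\ast - \delta, \eta^\ast + \delta)$; by uniqueness it agrees with $f$ on the right, giving a continuation past $\eta^\ast$ and contradicting maximality. Therefore $\eta^\ast = 0$.

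I expect the blow-up step to be the main obstacle, as it is where the interaction of (\ref{pf1}) with the growth condition (\ref{phi1}) is essential: without the divergence of $\int_1^\infty \phi'(f)/f\,{\rm d}f$ one would only be in Case B of \cite{simil}, and continuation all the way to $\eta=0$ could fail in general. The subsequent extension across an interior point is then a routine application of standard ODE theory, made possible by the positivity of $f$ inherited from monotonicity.
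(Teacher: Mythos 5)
Your proposal is correct and follows essentially the same strategy as the paper: the blow-up of $f$ as $\eta$ decreases is excluded by combining the a priori bound (\ref{pf1}) with the divergence condition in (\ref{phi1}), and the continuation to $\eta=0$ then follows from standard ODE theory (the paper bounds $-f'$ and invokes the extensibility theorem of Chicone, while you pass to the equivalent first-order system and apply Picard--Lindel\"of at the limit point, which is only a cosmetic difference).
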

\begin{proof}Suppose $0\le a_1<a$ and $f(\eta)\rightarrow\infty$ as $\eta\rightarrow a_1$. If there exist $a_2\in(a_1,a)$ is such that $f(a_2)>\frac{2\gamma}{a}$, then we have from (\ref{phi1}) that
\begin{align}
\int_{f(a_2)}^\infty\frac{\phi'(f)}{f+\frac{2\gamma}{a}}{\rm d}f>\frac{1}{2}\int_{f(a_2)}^\infty\frac{\phi'(f)}{f}{\rm d}f=\infty.\label{kb5}
\end{align}
But the boundedness of the integral from (\ref{pf1}), together with (\ref{kb5}), implies the boundedness of $f(a_1)$.

Now consider $a_1\le\eta\le a-\delta$ for $\delta>0$. Integrating (\ref{k}) from $\eta$ to $a-\delta$ yields
\begin{align*}
-f'(\eta)=\frac{1}{\phi'(f(\eta))}\left(\gamma-\frac{1}{2}(a-\delta)f(a-\delta)+\eta f(\eta)+\int_\eta^{a-\delta}f(s){\rm d}s\right),
\end{align*}
which implies for some constant $C$ that $-f'(\eta)\le C$ for $\eta\le a-\delta$. It follows from \cite[Theorem 1.186]{odea} that the solution can be continued back to $\eta=0$.\end{proof}

\subsection{Properties of $b(a,\gamma)$}
Define
\begin{align*}
b(a,\gamma)=\displaystyle\lim_{\eta\rightarrow0}f(\eta;a,\gamma),
\end{align*}
where $\gamma:=-\displaystyle\lim_{\eta\nearrow a}\phi'(f(\eta))f'(\eta)$
with $\gamma>0$. The following discussions on $b(a,\gamma)$ are used in proving existence of self-similar solution by shooting from $\eta=a$ with a given choice of $\gamma$, the derivative of $\phi(f)$ at $\eta=a$, back to $\displaystyle\lim_{\eta\rightarrow0}f(\eta;a,\gamma)$.

\begin{lemma}\label{lbg}
$b(a,\gamma)$ has the following properties with fixed $a$:
\begin{itemize}
\item[{\rm (i)}] $b(a,\gamma)$ is strictly monotonically increasing in $\gamma$;
\item[{\rm (ii)}] $b(a,\gamma)$ is a continuous function of $\gamma$ and the Lipschitz constant is uniform in $\gamma\in[\gamma_0,\gamma_3]$, where $0\le\gamma_0\le\gamma_3$;
\item[{\rm (iii)}] $\displaystyle\lim_{\gamma\rightarrow\infty}b(a,\gamma)=\infty$.
\end{itemize}
\end{lemma}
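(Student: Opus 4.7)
I plan to work throughout with the integro-differential representation (\ref{ba6}),
\[
\tau(f;\gamma)=\frac{2}{a^{2}}\int_{0}^{f}\frac{\phi'(\theta)}{\int_{0}^{\theta}[1-\tau(s;\gamma)]\,ds+\tfrac{2\gamma}{a}}\,d\theta,
\]
in which $\tau=1-\eta/a$ when $f=f(\eta;a,\gamma)$, and $b(a,\gamma)$ is characterised as the unique value of $f$ at which $\tau(f;\gamma)=1$. This characterisation is well-defined, since the integrand is positive for $\theta>0$ (making $\tau(\,\cdot\,;\gamma)$ strictly increasing) and Lemma \ref{lbw} ensures that the solution extends back to $\eta=0$, equivalently to $\tau=1$. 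For (i), fix $a>0$ and take $\gamma_{1}<\gamma_{2}$; from the leading-order behaviour $\tau(f;\gamma)\sim\phi(f)/(a\gamma)$ as $f\to 0^{+}$, obtained by substituting $\tau\equiv 0$ to leading order in the denominator, one has $\tau_{1}>\tau_{2}$ in some right-neighbourhood of $0$. If they were to first coincide at some $f^{*}>0$, then on $(0,f^{*})$ the inequalities $\gamma_{1}<\gamma_{2}$ and $\int_{0}^{\theta}[1-\tau_{1}]\,ds<\int_{0}^{\theta}[1-\tau_{2}]\,ds$ force the denominator in (\ref{ba6}) to be strictly smaller for $\tau_{1}$, yielding $\tau_{1}'>\tau_{2}'$ on $(0,f^{*})$, contradicting $\tau_{1}(f^{*})=\tau_{2}(f^{*})$. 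So $\tau_{1}>\tau_{2}$ wherever both are defined, and since each $\tau_{i}$ strictly increases to $1$ at $f=b(a,\gamma_{i})$, we conclude $b(a,\gamma_{1})<b(a,\gamma_{2})$.

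For (ii), I restrict to $\gamma\in[\gamma_{0},\gamma_{3}]$ with $\gamma_{0}>0$. By (i), $b(a,\gamma)\le B:=b(a,\gamma_{3})$, and on $[0,B]$ the denominator in (\ref{ba6}) is bounded below by $2\gamma_{0}/a$. Subtracting the integral equations for $\tau(\,\cdot\,;\gamma_{1})$ and $\tau(\,\cdot\,;\gamma_{2})$, using the elementary bound
\[
\bigl|D(\theta;\gamma_{1})-D(\theta;\gamma_{2})\bigr|\le \int_{0}^{\theta}|\tau_{1}-\tau_{2}|\,ds+\tfrac{2}{a}|\gamma_{1}-\gamma_{2}|,
\]
and then applying Fubini and Gronwall's inequality, produces $|\tau(f;\gamma_{1})-\tau(f;\gamma_{2})|\le C|\gamma_{1}-\gamma_{2}|$ on $[0,B]$ with $C=C(a,\gamma_{0},\gamma_{3})$. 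To transfer this to a bound on $b$, take $\gamma_{1}<\gamma_{2}$ and hence $b_{1}<b_{2}$ by (i); then
\[
1-\tau(b_{1};\gamma_{2})=\tau(b_{1};\gamma_{1})-\tau(b_{1};\gamma_{2})\le C|\gamma_{1}-\gamma_{2}|,
\]
while simultaneously $1-\tau(b_{1};\gamma_{2})=\int_{b_{1}}^{b_{2}}\tau'(f;\gamma_{2})\,df\ge c(b_{2}-b_{1})$, where $c>0$ is a uniform lower bound for $\tau'(\,\cdot\,;\gamma_{2})$ on $[b(a,\gamma_{0}),B]$ obtained from $\tau'=2\phi'(f)/(a^{2}D)$ together with $\phi'(f)\ge\phi'(b(a,\gamma_{0}))>0$ and the uniform upper bound on $D$. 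Hence $|b(a,\gamma_{1})-b(a,\gamma_{2})|\le(C/c)|\gamma_{1}-\gamma_{2}|$.

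For (iii), I return to the original variable. By (\ref{k}) and Lemma \ref{mono}, $[\phi'(f)f']'=-\tfrac{\eta}{2}f'\ge 0$ on $(0,a)$, so $\eta\mapsto\phi'(f(\eta))f'(\eta)=[\phi(f(\eta))]_{\eta}$ is non-decreasing on $[0,a]$ with terminal value $-\gamma$ at $\eta=a$. Hence $[\phi(f(\eta))]_{\eta}\le -\gamma$ on $(0,a)$, and integrating from $0$ to $a$ yields $\phi(b(a,\gamma))\ge\gamma a$. Since $\phi'(s)>0$ and is strictly increasing for $s>0$, $\phi$ is strictly increasing and unbounded above, so $b(a,\gamma)\ge\phi^{-1}(\gamma a)\to\infty$ as $\gamma\to\infty$.

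The main obstacle will be (ii): while the Gronwall argument applied to (\ref{ba6}) readily produces a uniform Lipschitz estimate for $\tau$ in $\gamma$, transferring that estimate to $b(a,\gamma)$ requires separately exhibiting a positive lower bound on $\tau'$ in a neighbourhood of the terminal level $\tau=1$ that is uniform over $\gamma\in[\gamma_{0},\gamma_{3}]$, which is what makes the lower bound $\phi'(b(a,\gamma_{0}))>0$ essential and why we must restrict to $\gamma_{0}>0$.
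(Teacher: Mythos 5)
Your proof is correct and follows essentially the same route as the paper: part (ii) is the identical Gronwall argument on the integral equation (\ref{ba6}) followed by the transfer to $b$ via a uniform positive lower bound on $\partial\tau/\partial f$ over $[b(a,\gamma_0),b(a,\gamma_3)]$ (which is also why both you and the paper need $\gamma_0>0$ despite the statement allowing $\gamma_0=0$), and part (iii) rests on the same flux inequality $-\phi'(f(\eta))f'(\eta)\ge\gamma$, with your conclusion via unboundedness of $\phi$ being a slightly cleaner finish than the paper's appeal to (\ref{phi1}). The only real variation is in (i), where you run the comparison as a first-crossing argument in the $(f,\tau)$ variables using the leading-order behaviour $\tau\sim\phi(f)/(a\gamma)$, rather than the paper's sign contradiction obtained by integrating the equation from the last crossing point $\eta_0$ to $a$; this is an equivalent and arguably tidier argument, since it avoids having to justify the derivative ordering $f_{\gamma_2}'(\eta_0)\le f_{\gamma_1}'(\eta_0)$ at the touching point.
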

\begin{proof}Our strategy is inspired by \cite{simil}, but here we consider the value of $f$ at $\eta=0$ as a function of both $a$ and $\gamma$, and study the dependence of $b(a,\gamma)$ on $\gamma$.

(i) Denote $f_{\gamma_i}=f(\eta;a,\gamma_i)$. Let $f_{\gamma_1}$ and $f_{\gamma_2}$ be positive solutions satisfying (\ref{k}), (\ref{kb1}) corresponding to $\gamma=\gamma_1,\gamma=\gamma_2$. Suppose $b(a,\gamma)$ is not strictly monotonically increasing in $\gamma$. Then it is possible to find $\gamma_1>\gamma_2$ such that $b(a,\gamma_1)\le b(a,\gamma_2)$ and $\eta_0\in[0,a)$ such that $f_{\gamma_1}(\eta_0)=f_{\gamma_2}(\eta_0)$ and $f_{\gamma_1}>f_{\gamma_2}$ on $(\eta_0,a)$, we denote  $\bar f:=f_{\gamma_1}(\eta_0)=f_{\gamma_2}(\eta_0)$.

Integrating the equation (\ref{k}) for $f_{\gamma_1}$ and $f_{\gamma_2}$ from $\eta_0$ to $a$ and obtain,
\begin{align}
&\frac{1}{2}\eta_0\bar f+\frac{1}{2}\int_{\eta_0}^{a}f_{\gamma_1}(s){\rm d}s=-\gamma_1-\phi'(\bar f)f_{\gamma_1}'(\eta_0),\label{bg1}\\
&\frac{1}{2}\eta_0\bar f+\frac{1}{2}\int_{\eta_0}^{a}f_{\gamma_2}(s){\rm d}s=-\gamma_2-\phi'(\bar f)f_{\gamma_2}'(\eta_0).\label{bg2}
\end{align}
Subtract (\ref{bg2}) from (\ref{bg1}) gives
\begin{align*}
\frac{1}{2}\int_{\eta_0}^{a}(f_{\gamma_1}(s)-f_{\gamma_2}(s)){\rm d}s=(\gamma_2-\gamma_1)+\phi'(\bar f)[f_{\gamma_2}'(\eta_0)-f_{\gamma_1}'(\eta_0)].
\end{align*}
Since $f_{\gamma_1}>f_{\gamma_2}$ on $(\eta_0,a)$, the left-hand side is positive. The right-hand side is negative because $f_{a_2}'(\eta_0)\le f_{a_1}'(\eta_0)$ at $\eta_0$ and $\gamma_2<\gamma_1$. We therefore have a contradiction, and the function $b(a,\gamma)$ must be strictly monotonically increasing in $\gamma$.

(ii) Let $0<\gamma_0\le\gamma_1<\gamma_2\le\gamma_3$. Recall the function $\t(f)$ from Lemma \ref{lkb1} and set $\t(f)=\t(f;\gamma_i)=\t_i$, where $i=1,2$. Then
\begin{align*}
|\t(f;\gamma_1)-\t(f;\gamma_2)|
=2a^{-2}\left|\int_0^f\frac{\phi'(\theta)\left\{\int_0^\theta[\t_1(s)-\t_2(s)]{\rm d}s+\frac{2\gamma_2}{a}-\frac{2\gamma_1}{a}\right\}}{\left\{\int_0^\theta [1-\tau_1(s)]{\rm d}s+\frac{2\gamma_1}{a}\right\}\left\{\int_0^\theta [1-\tau_2(s)]{\rm d}s+\frac{2\gamma_2}{a}\right\}}{\rm d}\theta\right|.
\end{align*}
Consider the function
\begin{align*}
L(\theta;\gamma)=\left(\theta+\frac{2\gamma}{a}\right)^{-1}\left\{\int_0^\theta [1-\tau(s;\gamma)]{\rm d}s+\frac{2\gamma}{a}\right\},\quad 0<\theta\le b(a,\gamma).
\end{align*}
$L(\theta;\gamma)$ is a monotonically decreasing function of $\theta$ and $L\rightarrow 1$ as $\theta\rightarrow0$. Therefore, when $0<\theta\le b(a,\gamma)$
\begin{align*}
L[b(a,\gamma);\gamma]\le L(\theta;\gamma)\le 1.
\end{align*}

We can now  write
\begin{align*}
|\t(f;\gamma_1)-\t(f;\gamma_2)|\le A(\gamma_2-\gamma_1)+B\int_0^f\frac{\phi'(\theta)}{\theta+\frac{2\gamma_1}{a}}\displaystyle\max_{0\le s\le\theta}|\t(s;\gamma_1)-\t(s;\gamma_2)|{\rm d}\theta,
\end{align*}
where
\begin{align*}
&A=16a^{-1}\gamma_0^{-2}\phi(b(a,\gamma_3))\left\{L[b(a,\gamma_1);\gamma_1]\right\}^{-1}\left\{L[b(a,\gamma_2);\gamma_2]\right\}^{-1},\\
&B=2a^{-2}\left\{L[b(a,\gamma_1);a]\right\}^{-1}\left\{L[b(a,\gamma_2);\gamma_2]\right\}^{-1},
\end{align*}
and if we set $\omega(f)=\displaystyle\max_{0\le\theta\le f}|\t(\theta;\gamma_1)-\t(\theta;\gamma_2)|$, then
\begin{align*}
\omega(f)\le A(\gamma_2-\gamma_1)+B\int_0^f\frac{\phi'(\theta)}{\theta+\frac{2\gamma_1}{a}}\omega(\theta){\rm d}\theta.
\end{align*}

Define the function
\begin{align*}
M(\gamma)=L[b(a,\gamma);\gamma]=[ab(a,\gamma)+2\gamma]^{-1}\left[\int_0^af(\eta;\gamma){\rm d}\eta+2\gamma\right].
\end{align*}
It was shown in (ii) that, since $\gamma_i\ge \gamma_0 (i=1,2)$,
\begin{align*}
f(\eta;\gamma_i)\ge f(\eta;\gamma_0)\quad {\rm on}\ [0,a).
\end{align*}
Since $f(\eta;\gamma_i)>0$ it follows that
\begin{align*}
M(\gamma_i)\ge[ab(a,\gamma_i)+2\gamma_i]^{-1}\left[\int_0^{a}f(\eta;\gamma_0){\rm d}\eta+2\gamma_0\right].
\end{align*}
Moreover, $\gamma_i\le \gamma_3$ and hence, in view of (ii), $b(a,\gamma_i)<b(a,\gamma_3)$. Therefore
\begin{align*}
M(\gamma_i)\ge[ab(a,\gamma_3)+2\gamma_3]^{-1}\left[\int_0^{a}f(\eta;\gamma_0){\rm d}\eta+2\gamma_0\right].
\end{align*}
Thus it can be seen that the constants $A$ and $B$ are uniformly bounded for $\gamma\in [\gamma_0,\gamma_3]$.

It now follows from Gronwall's Lemma (see \cite[p.24]{ode}) and the fact that $f\le b(a,\gamma_3)$, that $\t(f;\gamma)$ satisfies a Lipschitz condition in $\gamma$ which is uniform with respect to $f\in[0,b(a,\gamma_3)]$ and $\gamma\in[\gamma_0,\gamma_3]$.

From this, and the observation that $\t$ is continuously differentiable on $(0,1]$ with
\begin{align*}
\frac{\partial\t}{\partial f}=2a^{-2}\frac{\phi'(f)}{f+\frac{2\gamma}{a}}[L(f;\gamma)]^{-1}\ge 2a^{-2}\frac{\phi'(f)}{f+\frac{2\gamma}{a}},
\end{align*}
we can write

\begin{align*}
|\t(b(a,\gamma_1);\gamma_2)-\t(b(a,\gamma_2);\gamma_2)|=\int_{b(a,\gamma_1)}^{b(a,\gamma_2)}\frac{\partial\t}{\partial f}(f,\gamma_2){\rm d}f
\ge2a^{-2}\frac{\phi'(f^*)}{f^*+\frac{2\gamma_2}{a}}[b(a,\gamma_2)-b(a,\gamma_1)],
\end{align*}
by the Mean Value Theorem, for some $f^*\in(b(a,\gamma_1),b(a,\gamma_2))$. Now we consider
\begin{align*}
&|\t(b(a,\gamma_1);\gamma_2)-\t(b(a,\gamma_2);\gamma_2)|-|\t(b(a,\gamma_1);\gamma_1)-\t(b(a,\gamma_1);\gamma_2)|\\ \le&|\t(b(a,\gamma_1);\gamma_1)-\t(b(a,\gamma_2);\gamma_2)|=0.
\end{align*}
Then we have
\begin{align*}
|\t(b(a,\gamma_1);\gamma_2)-\t(b(a,\gamma_2);\gamma_2)|\le|\t(b(a,\gamma_1);\gamma_1)-\t(b(a,\gamma_1);\gamma_2)|\le K|\gamma_1-\gamma_2|,
\end{align*}
since $\t$ is Lipschitz continuous. Therefore
\begin{align*}
2a^{-2}\frac{\phi'(f^*)}{f^*+\frac{2\gamma_2}{a}}[b(a,\gamma_2)-b(a,\gamma_1)]\le K|\gamma_1-\gamma_2|.
\end{align*}

We may conclude that the function $b(a,\gamma)$ Lipschitz continuous in $\gamma$ and the Lipschitz constant is uniform in $\gamma\in[\gamma_0,\gamma_3]$.

(iii) Integrating (\ref{k}) from $\eta$ to $a$ yields
\begin{align*}
-\phi'(f(\eta))f'(\eta)=\gamma-\frac{1}{2}\int_\eta^asf'(s){\rm d}s\ge\gamma.
\end{align*}
Then we integrate from $\eta$ to $a$ and obtain
\begin{align*}
\int_{0}^{f(\eta)}\phi'(f){\rm d}f\ge\gamma(a-\eta),
\end{align*}
letting $\eta\rightarrow 0$ gives
\begin{align*}
\int_{0}^{b(a,\gamma)}\frac{\phi'(f)}{f}{\rm d}f\ge a\gamma.
\end{align*}
As $\phi'(f)$ is continuous on $[0,\infty)$ and $\phi'(0)=0$ then we have by condition (\ref{phi1}) that $b(a,\gamma)\rightarrow\infty$ as $\gamma\rightarrow\infty$.
\end{proof}

\begin{lemma}\label{lba}
$b(a,\gamma)$ has the following properties with fixed $\gamma$:
\begin{itemize}\item[{\rm (i)}] $b(a,\gamma)$ is strictly monotonically increasing in $a$;
\item[{\rm (ii)}] $\displaystyle\lim_{a\rightarrow0}b(a,\gamma)=0$;
\item[{\rm (iii)}] $b(a,\gamma)$ is Lipschitz continuous in $a$ and the Lipschitz constant is uniform in $a\in(a_0,a_3)$ and $\gamma\in(\gamma_0,\gamma_3)$, where $0\le a_0\le a_3$, $0\le \gamma_0\le \gamma_3$;
\item[{\rm (iv)}] $\displaystyle\lim_{a\rightarrow\infty}b(a,\gamma)=\infty$.\end{itemize}
\end{lemma}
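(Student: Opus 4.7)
The plan is to mirror the proof of Lemma \ref{lbg}, now keeping $\gamma$ fixed and letting $a$ vary; I address the four parts in turn.

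For (i), I argue by contradiction: suppose $0<a_1<a_2$ but $b(a_1,\gamma)\ge b(a_2,\gamma)$, and set $f_i:=f(\cdot;a_i,\gamma)$. Since $f_2(a_1)>0=f_1(a_1)$ while $f_1(0)\ge f_2(0)$, continuity produces a largest $\eta_0\in[0,a_1)$ with $\bar f:=f_1(\eta_0)=f_2(\eta_0)$ and $f_2>f_1$ on $(\eta_0,a_1]$. Integrating (\ref{k}) from $\eta_0$ to $a_i$, using the boundary condition (\ref{kb1}) and integration by parts in the $\eta f'$ term, gives for $i=1,2$
\begin{align*}
\tfrac{1}{2}\eta_0\bar f+\tfrac{1}{2}\int_{\eta_0}^{a_i}f_i(s)\,{\rm d}s=-\gamma-\phi'(\bar f)f_i'(\eta_0).
\end{align*}
Subtracting, the left-hand side is strictly negative because $f_2>f_1$ on $(\eta_0,a_1)$ and $f_2>0$ on $(a_1,a_2)$, while the right-hand side is non-negative because $f_2-f_1$ attains a minimum at $\eta_0$ from the right, forcing $f_2'(\eta_0)\ge f_1'(\eta_0)$. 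This contradiction gives strict monotonicity.

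For (ii), I let $a_1\to0^+$ in (\ref{pf1}), using the continuity of $f(\cdot;a,\gamma)$ up to $\eta=0$ from Lemma \ref{lbw}, to obtain $\int_0^{b(a,\gamma)}\phi'(f)/(f+2\gamma/a)\,{\rm d}f\le a^2/2$. Monotonicity in (i) ensures $b_*:=\lim_{a\to0^+}b(a,\gamma)$ exists and satisfies $b(a,\gamma)\ge b_*$ for all small $a$. Bounding the integrand below by $\phi'(f)/(b_*+2\gamma/a)$ on $[0,b_*]$ then yields $\phi(b_*)/(b_*+2\gamma/a)\le a^2/2$, i.e.\ $\phi(b_*)\le a^2b_*/2+a\gamma\to0$ as $a\to0$. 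By (\ref{phi}), $\phi(b_*)=0$ forces $b_*=0$.

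Part (iii) is the technical core, and the step I expect to cost the most bookkeeping. I would regard $\tau=\tau(f;a)$ from Lemma \ref{lkb1} as an $a$-parametrised family of solutions of (\ref{ba6}) and decompose $|\tau(f;a_1)-\tau(f;a_2)|$ into contributions from the prefactor $a^{-2}$, the shift $2\gamma/a$ in the denominator, and the implicit dependence on $\tau$ itself. Using the auxiliary function $L(\theta;a)$ analogous to that in Lemma \ref{lbg}(ii), together with the uniform bounds on $b$ provided by (i) and (iv), the differences in the explicit $a$-dependences contribute linear-in-$|a_1-a_2|$ terms, giving for $\omega(f):=\max_{0\le\theta\le f}|\tau(\theta;a_1)-\tau(\theta;a_2)|$ an estimate of the form
\begin{align*}
\omega(f)\le C|a_1-a_2|+D\int_0^f\frac{\phi'(\theta)}{\theta+2\gamma/a_3}\omega(\theta)\,{\rm d}\theta,
\end{align*}
with $C,D$ bounded uniformly for $a_1,a_2\in[a_0,a_3]$ and $\gamma\in[\gamma_0,\gamma_3]$. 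Gronwall's inequality then produces uniform Lipschitz dependence of $\tau$ on $a$ over $[0,b(a_3,\gamma_3)]$, and the Mean Value Theorem applied to $\partial\tau/\partial f$ at the end-point, exactly as in Lemma \ref{lbg}(ii), transfers this to Lipschitz dependence of $b(a,\gamma)$ on $a$.

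For (iv), the computation at the end of the proof of Lemma \ref{lbg}(iii) already yields the lower bound $\int_0^{b(a,\gamma)}\phi'(f)/f\,{\rm d}f\ge a\gamma$, which is symmetric in its use of $a$ and $\gamma$. Since condition (\ref{phi1}) implies $\int_0^M\phi'(f)/f\,{\rm d}f\to\infty$ only as $M\to\infty$, letting $a\to\infty$ with $\gamma$ fixed forces $b(a,\gamma)\to\infty$.
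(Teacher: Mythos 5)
Your proposal is correct, and for parts (i) and (iii) it follows essentially the paper's own route: (i) is the intersection-point/integration-by-parts contradiction of Lemma \ref{lbg}(i) transplanted to varying $a$ (which is exactly what the paper does, saying only that the argument is ``similar''), and (iii) is the same $L$-function, Gronwall and Mean-Value-Theorem scheme as Lemma \ref{lbg}(ii), sketched at roughly the level of detail the paper itself allows when it ``omits most of the details''. The genuine divergence is in (ii): the paper multiplies the equation by the integrating factor $e^{\eta^2/4N}$ with $N=\phi'(b(1,\gamma))$, obtains $\phi(f(\eta))\le -A\int_\eta^a e^{-s^2/4N}\,{\rm d}s$ with $-A=\gamma+\frac12\int_0^a f(s)\,{\rm d}s$ bounded as $a\to0$, and concludes because the integral over $(0,a)$ shrinks; you instead feed the monotone limit $b_*$ into the a priori inequality (\ref{pf1}) and read off $\phi(b_*)\le \frac{a^2 b_*}{2}+a\gamma\to0$. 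Your route is shorter and reuses an estimate already proved, while the paper's buys a quantitative decay estimate for $\phi(f)$ of the kind it exploits again later (cf.\ Lemma \ref{pfb}). In (iv) you recycle the lower bound from Lemma \ref{lbg}(iii), observing it is symmetric in $a$ and $\gamma$, rather than deriving the paper's separate bound $\int_0^{b(a,\gamma)}\phi'(f)f^{-1}\,{\rm d}f\ge\frac14 a^2$ from $-\phi'(f)f'\ge\frac{\eta}{2}f$; note that the direct integration of $-\phi'(f)f'\ge\gamma$ over $(0,a)$ actually gives $\phi(b(a,\gamma))\ge a\gamma$ rather than the $\phi'(f)/f$ form displayed in the paper, but either version forces $b(a,\gamma)\to\infty$, so your conclusion stands. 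I see no gaps.
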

\begin{proof}The proof of (i) follows from a similar argument to that of Lemma \ref{lbg} (i).

(ii) Let $a<1$, denote $N=\phi'(b(1,\gamma))$, we have $\phi'(f)\le N$ by (i). Then we get directly from (\ref{k}) that
\begin{align*}
[\phi(f(\eta))]+\frac{\eta}{2N}[\phi(f(\eta))]'\ge''\quad {\rm for}\ 0<\eta<a,
\end{align*}
multiplying $e^{\frac{\eta^2}{4N}}$ and integrating from $0$ to $\eta$ then yields
\begin{align*}
[\phi(f(\eta))]'\ge Ae^{\frac{-\eta^2}{4N}},
\end{align*}
where $A=\phi'(b(a,\gamma))f'(0)<0$. Integrating from $\eta$ to $a$ we get
\begin{align*}
\phi(f(\eta))\le-A\int_\eta^ae^{\frac{-s^2}{4N}}{\rm d}s.
\end{align*}

Now we integrate the equation (\ref{k}) from $0$ to $a$ and obtain
\begin{align*}
\frac{1}{2}\int_0^af(s){\rm d}s=\gamma-\phi'(b(a,\gamma))f'(0).
\end{align*}
Then $-A=\gamma+\displaystyle\frac{1}{2}\int_0^af(s){\rm d}s$ and we have $-A\rightarrow\gamma$ is bounded as $a\rightarrow 0$.

Therefore $\displaystyle\lim_{\eta\rightarrow 0}\phi(f(\eta))\le -A\int_0^ae^{\frac{-s^2}{4N}}{\rm d}s\rightarrow 0$ as $a\rightarrow 0$ since $e^{\frac{-s^2}{4N}}$ is bounded, which implies that $\displaystyle\lim_{a\rightarrow 0}\lim_{\eta\rightarrow0}f(\eta)=\lim_{a\rightarrow0}b(a,\gamma)=0$.

(iii) The proof is similar to that of Lemma \ref{lbg} (ii). We omit most of the details and only note the key differences. Let $0<a_0\le a_1<a_2\le a_3$. Recall the function $\t(f)$ from Lemma \ref{lkb1} and set $\tau(f)=\tau(f;a_i)=\tau_i$, where $i=1,2$. Then
\begin{align*}
|\t(f;a_1)-\t(f;a_2)|
\le&2a_2^{-2}\int_0^f\frac{\phi'(\theta)\int_0^\theta|\t_1(s)-\t_2(s)|{\rm d}s}{\left\{\int_0^\theta[1-\t_1(s)]{\rm d}s+\frac{2\gamma}{a_1}\right\}\left\{\int_0^\theta[1-\t_2(s)]{\rm d}s+\frac{2\gamma}{a_2}\right\}}{\rm d}\theta\\&+\frac{2(a_2^{2}-a_1^{2})}{a_2^2a_1^2}\int_0^f\frac{\phi'(\theta)}{\int_0^\theta[1-\t_1(s)]{\rm d}s+\frac{2\gamma}{a_1}}{\rm d}\theta.
\end{align*}
Let $0<\gamma_0\le \gamma_1<\gamma_2\le \gamma_3$ and consider the function
\begin{align*}
L(\theta;a,\gamma)=\left(\theta+\frac{2\gamma}{a}\right)^{-1}\left\{\int_0^\theta [1-\tau(s;a)]{\rm d}s+\frac{2\gamma}{a}\right\},\quad 0<\theta\le b(a,\gamma).
\end{align*}
The function $L(\theta;a)$ is clearly a monotonically decreasing function of $\theta$, and $L\rightarrow 1$ as $\theta\rightarrow0$.
Therefore
\begin{align*}
L[b(a,\gamma);a]\le L(\theta;a)\le 1 \quad {\rm for}\ 0<\theta\le b(a,\gamma).
\end{align*}

It then follows that
\begin{align*}
|\t(f;a_1)-\t(f;a_2)|\le A(a_2-a_1)+B\int_0^f\frac{\phi'(\theta)}{\theta+\frac{2\gamma}{a}}\displaystyle\max_{0\le s\le\theta}|\t(s;a_1)-\t(s;a_2)|{\rm d}\theta,
\end{align*}
where
\begin{align*}
A&=2\frac{a_1+a_2}{a_1^2a_2^2}\int_0^{b(a_1,\gamma)}\frac{\phi'(\theta)}{\theta+\frac{2\gamma_0}{a_1}}{\rm d}\theta\left\{L[b(a_1,\gamma);a_1]\right\}^{-1}\\
&\le2\frac{a_1+a_2}{a_1^2a_2^2}\int_0^{b(a_1,\gamma_3)}\frac{\phi'(\theta)}{\theta+\frac{2\gamma_0}{a_1}}{\rm d}\theta\left\{L[b(a_1,\gamma);a_1]\right\}^{-1},\\
B&=2a_2^{-2}\left\{L[b(a_1,\gamma);a_1]\right\}^{-1}\left\{L[b(a_2,\gamma);a_2]\right\}^{-1}.
\end{align*}
If we set $\omega(f)=\displaystyle\max_{0\le\theta\le f}|\t(\theta;a_1)-\t(\theta;a_2)|$, we then have
\begin{align*}
\omega(f)\le A(a_2-a_1)+B\int_0^f\frac{\phi'(\theta)}{\theta+\frac{2\gamma}{a}}\omega(\theta){\rm d}\theta.
\end{align*}

Define the function
\begin{align*}
M(a,\gamma):=L[b(a,\gamma);a]=[ab(a,\gamma)+2\gamma]^{-1}\left[\int_0^af(\eta;a,\gamma){\rm d}\eta+2\gamma\right].
\end{align*}
By the proof of (i) and Lemma \ref{lbg} (i), we have
\begin{align*}
M(a_i,\gamma_i)\ge[a_3b(a_3,\gamma_3)+2\gamma_3]^{-1}\left[\int_0^{a_0}f(\eta;a_0,\gamma_0){\rm d}\eta+2\gamma_0\right].
\end{align*}
Thus it can be seen that the constant $A$ and $B$ are uniformly bounded on the interval $[a_0,a_3]$ and $[\gamma_0,\gamma_3]$.

It now follows from Gronwall's Lemma \cite[p24]{ode} and the fact $f\le b(a_3,\gamma)$ that $\t(f;a)$ satisfies a Lipschitz condition in $a$ which is uniform with respect to $f\in[0,b(a_3,\gamma)]$ and $a\in[a_0,a_3]$.

We may conclude by a similar argument to that in proof of Lemma \ref{lbg} (ii) that the function $b(a,\gamma)$ is Lipschitz continuous in $a$ and the Lipschitz constant is uniform in $a\in(a_0,a_3)$ and $\gamma\in(\gamma_0,\gamma_3)$.

It can be shown that the Lipschitz constant is uniform in both $a\in(a_0,a_3)$ and $\gamma\in(\gamma_0,\gamma_3)$ since we proved the monotonicity on $\gamma$ of $b(a,\gamma)$ on Lemma \ref{lbg}. This result will be used in proving $b(a,\gamma)$ is a continuous function of both $a$ and $\gamma$.

(iv) Integrating (\ref{k}) from $\eta$ to $a$ yields
\begin{align*}
-\phi'(f(\eta))f'(\eta)=\gamma-\frac{1}{2}\int_\eta^asf'(s){\rm d}s\ge\gamma+\frac{\eta}{2}f(\eta)\ge\frac{\eta}{2}f(\eta).
\end{align*}
For any $a_4$ with $\eta<a_4<a$ we obtain
\begin{align*}
\int_{f(a_4)}^{f(\eta)}\frac{\phi'(f)}{f}{\rm d}f\ge\frac{1}{4}(a_4^2-\eta^2),
\end{align*}
letting $a_4\rightarrow a$ and $\eta\rightarrow 0$,
\begin{align*}
\int_{0}^{b(a,\gamma)}\frac{\phi'(f)}{f}{\rm d}f\ge\frac{1}{4}a^2.
\end{align*}
As $\phi'(f)$ is continuous on $[0,\infty)$ and $\phi'(0)=0$ then we have by condition (\ref{phi1}) that $b(a,\gamma)\rightarrow\infty$ as $a\rightarrow\infty$.\end{proof}

Now we prove that $b(a,\gamma)$ is a continuous function of both $a$ and $\gamma$ by using Lemma \ref{lbg} (ii) and Lemma \ref{lba} (iii).

\begin{lemma}
$b(a,\gamma)$ is a continuous function of $\gamma$ and $a$.
\label{lc1}
\end{lemma}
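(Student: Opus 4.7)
The plan is to deduce joint continuity from the two one-variable Lipschitz estimates already established, via a standard split through an intermediate point. Fix any $(a^*,\gamma^*)$ with $a^*>0$ and $\gamma^*>0$, and choose positive constants $a_0<a^*<a_3$ and $\gamma_0<\gamma^*<\gamma_3$, so that $(a^*,\gamma^*)$ lies in the interior of the box $\mathcal{B}:=[a_0,a_3]\times[\gamma_0,\gamma_3]$. I will show that $b$ is Lipschitz on $\mathcal{B}$, which gives continuity at $(a^*,\gamma^*)$, and hence at every point of $(0,\infty)\times(0,\infty)$.

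For any two points $(a_1,\gamma_1),(a_2,\gamma_2)\in\mathcal{B}$, insert the intermediate point $(a_2,\gamma_1)$ and apply the triangle inequality:
\begin{align*}
|b(a_1,\gamma_1)-b(a_2,\gamma_2)|
\le |b(a_1,\gamma_1)-b(a_2,\gamma_1)|+|b(a_2,\gamma_1)-b(a_2,\gamma_2)|.
\end{align*}
Lemma \ref{lba}(iii) bounds the first term by $K_1|a_1-a_2|$ with a constant $K_1$ that is uniform for $a\in[a_0,a_3]$ and $\gamma\in[\gamma_0,\gamma_3]$, and Lemma \ref{lbg}(ii) bounds the second term by $K_2|\gamma_1-\gamma_2|$ with a constant $K_2$ uniform in $\gamma\in[\gamma_0,\gamma_3]$ (applied at the fixed value $a=a_2$). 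Hence
\begin{align*}
|b(a_1,\gamma_1)-b(a_2,\gamma_2)|\le K_1|a_1-a_2|+K_2|\gamma_1-\gamma_2|,
\end{align*}
which shows $b$ is Lipschitz on $\mathcal{B}$, and in particular continuous at $(a^*,\gamma^*)$.

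Since $(a^*,\gamma^*)$ was an arbitrary point of $(0,\infty)\times(0,\infty)$ and the box $\mathcal{B}$ containing it was chosen only so that $a_0,\gamma_0>0$, this establishes the continuity of $b(a,\gamma)$ as a function of the pair $(a,\gamma)$ on its whole domain. The only point to verify carefully is that the Lipschitz constant $K_2$ from Lemma \ref{lbg}(ii), stated there for fixed $a$, can be taken uniformly in $a\in[a_0,a_3]$. This is not a new obstacle: inspection of the proof of Lemma \ref{lbg}(ii) shows that the constants $A$ and $B$ appearing there depend on $a$ only through $\phi(b(a,\gamma_3))$ and $L[b(a,\gamma_i);\gamma_i]$, both of which are uniformly controlled on $\mathcal{B}$ by the monotonicity and continuity properties of $b$ already proved in Lemmas \ref{lbg}(i) and \ref{lba}(i)–(ii); so $K_2$ may indeed be chosen independent of $a\in[a_0,a_3]$, completing the argument.
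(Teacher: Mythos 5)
Your proof is correct and follows essentially the same route as the paper: a triangle-inequality split through an intermediate point, using Lemma \ref{lba}(iii) for the $a$-increment (uniformly in $\gamma$) and Lemma \ref{lbg}(ii) for the $\gamma$-increment. The extra uniformity of the $\gamma$-Lipschitz constant in $a$ that you verify is not strictly needed for mere continuity, since one can always perform the $\gamma$-variation at the fixed target value of $a$, exactly as the paper does.
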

\begin{proof}Consider
\begin{align*}
|b(a,\gamma)-b(a_0,\gamma_0)|\le|b(a,\gamma)-b(a_0,\gamma)|+|b(a_0,\gamma)-b(a_0,\gamma_0)|.
\end{align*}
It was shown in the proof of Lemma \ref{lbg} (ii) that $b(a,\gamma)$ is uniformly continuous in $\gamma\in[\gamma_0,\gamma_3]$, so there exists $\mu_1$ such that $|b(a_0,\gamma)-b(a_0,\gamma_0)|<\frac{\delta}{2}$ if $|\gamma-\gamma_0|<\mu_1$. And by the proof of Lemma \ref{lba} (iii),  there exists $\mu_2$ such that $|b(a,\gamma)-b(a_0,\gamma)|<\frac{\delta}{2}$ if $|a-a_0|<\mu_2$ and $\gamma\in[\gamma_0,\gamma_3]$. Therefore $|b(a,\gamma)-b(a_0,\gamma_0)|<\frac{\delta}{2}+\frac{\delta}{2}=\delta$ if $|a-a_1|+|\gamma-\gamma_0|\le\displaystyle\max\{\mu_1,\mu_2\}$.
\end{proof}

By similar arguments to those in Lemmas \ref{lbg}, \ref{lba} and \ref{lc1} and the fact that the particular choice of $\eta=0$ in $b(a,\gamma)=f(0;a,\gamma)$ plays no special role, letting $\eta_0\in[0,\infty]$ play the same role as $0$, we can obtain the following corollary.
\begin{corollary}
For each fixed $\eta_0\in(0,a)$, if $f$ satisfies (\ref{k}) and (\ref{kb1}), then $f(\eta_0;a,\gamma)$ is a continuous function of $a$ and $\gamma$ and is monotonically increasing in both $a$ and $\gamma$.
\label{ccf}
\end{corollary}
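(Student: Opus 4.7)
The strategy is to observe that in the proofs of Lemmas \ref{lbg}, \ref{lba}, and \ref{lc1}, the point $\eta=0$ entered only as an endpoint of backward continuation from $\eta=a$; the arguments never used any special property of $\eta=0$ beyond its lying in $[0,a)$. So I would fix any $\eta_0\in(0,a)$, set $\tilde b(a,\gamma):=f(\eta_0;a,\gamma)$, and rerun each proof with $\tilde b$ in place of $b$, restricting the parameter $a$ to $a>\eta_0$ whenever needed.

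For strict monotonicity in $\gamma$ at fixed $a$, suppose for contradiction that there exist $\gamma_1>\gamma_2$ with $f_{\gamma_1}(\eta_0)\le f_{\gamma_2}(\eta_0)$. The boundary conditions $f_{\gamma_i}(a)=0$ and $\lim_{\eta\nearrow a}\phi'(f_{\gamma_i})f_{\gamma_i}'=-\gamma_i$ force $f_{\gamma_1}>f_{\gamma_2}$ in some left-neighbourhood of $a$, so there is a largest $\tilde\eta\in[\eta_0,a)$ at which $f_{\gamma_1}(\tilde\eta)=f_{\gamma_2}(\tilde\eta)$, with $f_{\gamma_1}>f_{\gamma_2}$ on $(\tilde\eta,a)$. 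Integrating (\ref{k}) from $\tilde\eta$ to $a$ for $i=1,2$ and subtracting reproduces (\ref{bg1})--(\ref{bg2}) with $\tilde\eta$ in place of $\eta_0$, and the same sign comparison (positive left-hand side, negative right-hand side) gives a contradiction. Monotonicity in $a$ at fixed $\gamma$ is handled by the analogous adaptation of Lemma \ref{lba}(i).

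For continuity, the Lipschitz--Gronwall arguments of Lemmas \ref{lbg}(ii) and \ref{lba}(iii) apply without change to the inverse function $\tau(f;a,\gamma)=1-\eta/a$ from (\ref{ba6}), and yield bounds of the form $|\tau(f;a,\gamma_1)-\tau(f;a,\gamma_2)|\le K|\gamma_1-\gamma_2|$ and $|\tau(f;a_1,\gamma)-\tau(f;a_2,\gamma)|\le K'|a_1-a_2|$ uniform for $f$ in any compact sub-interval of $[0,\infty)$, and in particular for $f\in[0,f(\eta_0;a,\gamma_3)]$. Applying the Mean Value Theorem to $\partial\tau/\partial f$ between $f(\eta_0;a,\gamma_1)$ and $f(\eta_0;a,\gamma_2)$, exactly as in the closing step of the proof of Lemma \ref{lbg}(ii), converts these estimates into Lipschitz bounds on $f(\eta_0;\cdot,\cdot)$ in each parameter separately, and joint continuity follows from a triangle-inequality argument as in Lemma \ref{lc1}.

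The only bookkeeping obstacle is to check that narrowing the range of the $f$-variable from the full $[0,b(a,\gamma)]$ to $[0,f(\eta_0;a,\gamma)]$ does not weaken the uniform control of the constants $A$, $B$ and of the factor $L(\theta;a,\gamma)$ used in Lemmas \ref{lbg}(ii) and \ref{lba}(iii). This is automatic: since $L$ is monotonically decreasing in $\theta$, shrinking the $f$-interval only improves the lower bound $L[b(a,\gamma);\gamma]\le L(\theta;\gamma)$, while $A$ and $B$ are controlled by $\phi(b(a,\gamma_3))$ together with the fixed lower bounds on $a$ and $\gamma$, all of which survive the truncation unchanged. Hence every estimate from Lemmas \ref{lbg}--\ref{lc1} transfers directly to $f(\eta_0;\cdot,\cdot)$, giving the claimed monotonicity and joint continuity.
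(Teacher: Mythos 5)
Your proposal is correct and takes essentially the same route as the paper, which justifies Corollary \ref{ccf} precisely by observing that $\eta=0$ plays no special role in the proofs of Lemmas \ref{lbg}, \ref{lba} and \ref{lc1} and rerunning those arguments with $f(\eta_0;a,\gamma)$ in place of $b(a,\gamma)$. Your additional checks --- locating the last intersection point $\tilde\eta\in[\eta_0,a)$ for the monotonicity contradiction, and verifying that truncating the $f$-range to $[0,f(\eta_0;a,\gamma_3)]$ only improves the bounds on $L$, $A$ and $B$ --- are exactly the bookkeeping the paper leaves implicit.
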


\subsection{Solution in right-neighbourhood of $\eta=a$}
Now we consider $f$ that satisfies the equation
\begin{align}
-\frac{1}{2}\eta f'(\eta)=\varepsilon[\phi'(-f(\eta))f'(\eta)]',\quad  \eta>a.
\label{k2}
\end{align}
At the boundaries we require
\begin{align}
&\displaystyle\lim_{\eta\rightarrow\infty}f(\eta)=-V_0,\label{kkb2}\\
&\displaystyle\lim_{\eta\searrow a}f(\eta)=0,\quad \displaystyle\lim_{\eta\searrow a}\varepsilon\phi'(-f(\eta))f'(\eta)=-\gamma.\label{kkb1}
\end{align}

Next, we use the similar arguments to that of left-neighbourhood to prove the existence of a negative solution of (\ref{k2}) in a right-neighbourhood of $\eta=a$, which satisfies the boundary conditions (\ref{kkb1}).

\begin{lemma}
For given $a>0$, there exists $\delta>0$ such that in $(a,a+\delta)$ equation (\ref{k2}) has a unique solution which is negative and satisfies the boundary condition (\ref{kkb1}).
\label{lkb2}
\end{lemma}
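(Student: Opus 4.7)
The plan is to mirror the argument of Lemma \ref{lkb1}, with the roles of ``left of $a$'' and ``right of $a$'' interchanged and with the obvious sign adjustments to handle the fact that $f<0$ and $\phi'(-f)$ appears in place of $\phi'(f)$. First, I would suppose a solution exists in $(a,a+\delta)$ and integrate (\ref{k2}) from $a$ to $\eta$, using the boundary condition (\ref{kkb1}), to obtain
\begin{equation*}
\varepsilon\phi'(-f(\eta))f'(\eta)=-\gamma-\tfrac{1}{2}\int_a^\eta s\,f'(s)\,{\rm d}s.
\end{equation*}
Since $\gamma>0$ (Lemma \ref{ga2}) and the integral on the right is $o(1)$ as $\eta\searrow a$, the product $\varepsilon\phi'(-f)f'$ is strictly negative near $\eta=a$, and since $\phi'(-f)\to 0$ there, necessarily $f'(\eta)\to-\infty$ as $\eta\searrow a$ and $f$ is strictly decreasing on a right-neighbourhood of $a$.

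Next I would introduce $g=-f$, so that $g$ is positive and increasing with $g(a)=0$, and invert to view $\eta$ as a function of $g$, writing $\eta=\sigma(g)$ with $\sigma(0)=a$. Changing variables $\int_a^\eta s f'(s)\,{\rm d}s=-\int_0^{g(\eta)}\sigma(u)\,{\rm d}u$ and solving for $\sigma'$ yields the integro-differential equation
\begin{equation*}
\frac{{\rm d}\sigma}{{\rm d}g}=\frac{\varepsilon\phi'(g)}{\gamma-\tfrac{1}{2}\int_0^g\sigma(s)\,{\rm d}s}.
\end{equation*}
Setting $\tau(g)=\sigma(g)/a-1$ (so $\tau(0)=0$ and $\tau\ge 0$ near $g=0$) this becomes the fixed-point equation
\begin{equation*}
\tau(g)=\frac{2\varepsilon}{a^{2}}\int_0^g\frac{\phi'(\theta)}{\frac{2\gamma}{a}-\int_0^\theta[1+\tau(s)]\,{\rm d}s}\,{\rm d}\theta =: M(\tau)(g).
\end{equation*}

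I would then run the Banach fixed-point theorem on $X=\{\tau\in C([0,\mu]):0\le\tau\le\tfrac{1}{2}\}$ equipped with the sup-norm, exactly as in Lemma \ref{lkb1}. For $\tau\in X$ and $\theta\in[0,\mu]$ the denominator is bounded below by $\frac{2\gamma}{a}-\frac{3\mu}{2}$, which is positive and bounded away from zero once $\mu$ is chosen small; hence $M(\tau)(g)\le \frac{2\varepsilon}{a^2}\cdot\frac{\phi(\mu)}{(2\gamma/a)-(3\mu/2)}\le\tfrac12$ for all small enough $\mu$ since $\phi(\mu)\to 0$. A routine estimate, again parallel to Lemma \ref{lkb1}, gives
\begin{equation*}
\|M(\tau_1)-M(\tau_2)\|\le \frac{2\varepsilon}{a^{2}}\!\int_0^\mu\!\frac{\theta\,\phi'(\theta)}{\bigl(\frac{2\gamma}{a}-\frac{3\theta}{2}\bigr)^{2}}\,{\rm d}\theta\,\|\tau_1-\tau_2\|,
\end{equation*}
so $M$ is a contraction after shrinking $\mu$ once more. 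The unique fixed point $\tau$ then yields a unique monotonic $\sigma$ on $[0,\mu]$, whose inverse produces the required unique negative solution $f$ of (\ref{k2})--(\ref{kkb1}) on $(a,a+\delta)$ for $\delta$ suitably small.

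The only genuine obstacle, as compared with Lemma \ref{lkb1}, is that the denominator here has a minus sign rather than a plus sign: $\frac{2\gamma}{a}-\int_0^\theta[1+\tau]\,{\rm d}s$ in place of $\frac{2\gamma}{a}+\int_0^\theta[1-\tau]\,{\rm d}s$. This forces $\mu$ to be chosen small enough (depending on $a$ and $\gamma$) to keep the denominator strictly positive, but since $\gamma>0$ is fixed this is harmless. Everything else, including the verification of positivity of $\tau$ and monotonicity of the resulting $f$, follows verbatim from the half-line construction used for the left-neighbourhood.
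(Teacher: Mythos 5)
Your proposal is correct and follows essentially the same route as the paper: both suppose a local solution exists, invert to view $\eta$ as a function of the (negated) solution, derive an integro-differential fixed-point equation, and apply the contraction mapping principle on a small interval $[0,\mu]$ with $\mu$ constrained by $\gamma$ and $a$ to keep the denominator away from zero. The only cosmetic difference is your normalisation $\tau=\sigma/a-1$ versus the paper's reciprocal choice $\tau=a/\sigma$, which does not change the substance of the argument.
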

\begin{proof}It is convenient to start by supposing that such a solution exists in a right-neighbourhood of $\eta=a$. Similarly to the proof of Lemma \ref{lkb1}, let $\sigma(f)$ be a solution of this integro-differential equation which satisfies the initial condition $\sigma(0)=a$ and is defined and continuous on an interval $[f_0,0]$ for some $f_0<0$, and is continuously differentiable on $(f_0,0)$, such that
\begin{align}
\sigma(f)=a-2\int^0_f\frac{\varepsilon\phi'(-\theta)}{\int^0_\theta \sigma(s){\rm d}s-2\gamma}{\rm d}\theta,\label{baa5}
\end{align}
if we set
\begin{align*}
\tau(f)=\frac{a}{\sigma(f)}=\frac{a}{\eta},
\end{align*}
then (\ref{baa5}) becomes
\begin{align}
\tau(f)=\frac{1}{1-2a^{-2}\int_f^0\frac{\varepsilon\phi'(-\theta)}{\int_\theta^0 \frac{1}{\tau(s)}{\rm d}s-\frac{2\gamma}{a}}}{\rm d}\theta.\label{baa6}
\end{align}

If the solution of (\ref{baa6}) is unique, the corresponding solution of equation (\ref{k2}) is also unique.

\begin{lemma}
There exists a $\mu>0$ such that (\ref{baa6}) has a unique continuous solution in $-\mu\le f\le0$, which is such that $\t(0)=1$ and $\t(f)<1$ if $-\mu\le f<0$.\label{lkb3}
\end{lemma}
\begin{proof}With $\mu$ to be chosen later, we denoted by $X$ the set of continuous functions $\t(f)$ defined in $[-\mu,0]$, satisfying $\frac{1}{2}\le\t(f)\le1$. We denote by $\Vert\centerdot\Vert$ the supremum norm on $X$. Then $X$ is a complete metric space. On $X$ we introduce the map
\begin{align*}
M(\t)(f)=\frac{1}{1-2a^{-2}\int_f^0\frac{\varepsilon\phi'(-\theta)}{\int_\theta^0 \frac{1}{\tau(s)}{\rm d}s-\frac{2\gamma}{a}}{\rm d}\theta}
\ge\frac{1}{1+2a^{-2}\int_{-\mu}^0\frac{\varepsilon\phi'(-\theta)}{\theta+\frac{2\gamma}{a}}{\rm d}\theta}.
\end{align*}
It is clear that $M(\t)(f)\le1$ is well-defined, continuous. Moreover, $M(\t)\ge\frac{1}{2}$ if
\begin{align*}
\frac{1}{1+2a^{-2}\int_{-\mu}^0\frac{\varepsilon\phi'(-\theta)}{\theta+\frac{2\gamma}{a}}{\rm d}\theta}\ge\frac{1}{2},
\end{align*}
which gives
\begin{align}
2a^{-2}\int_{-\mu}^0\frac{\varepsilon\phi'(-\theta)}{\theta+\frac{2\gamma}{a}}{\rm d}\theta\le 1.\label{baa7}
\end{align}
Therefore, if $\mu$ is chosen so small that (\ref{baa7}) is satisfied, $M$ maps $X$ into itself.

We also wish to ensure that $M$ is a contraction map. Let $\t_1,\t_2\in X$ and choose $\mu\le\frac{\gamma}{a}$, we have
\begin{align*}
\Vert M(\t_1)-M(\t_2)\Vert=&2a^{-2}\left\Vert\frac{\int_f^0\varepsilon\phi'(-\theta)\frac{\int_\theta^0\frac{1}{\t_2(s)}-\frac{1}{\t_1(s)}{\rm d}s}{(\int_\theta^0\frac{1}{\t_1(s)}{\rm d}s-\frac{2\gamma}{a})(\int_\theta^0\frac{1}{\t_2(s)}{\rm d}s-\frac{2\gamma}{a})}}{(1-2a^{-2}\int_f^0\frac{\varepsilon\phi'(-\theta)}{\int_\theta^0\frac{1}{\t_1(s)}{\rm d}s-\frac{2\gamma}{a}})(1-2a^{-2}\int_f^0\frac{\varepsilon\phi'(-\theta)}{\int_\theta^0\frac{1}{\t_2(s)}{\rm d}s-\frac{2\gamma}{a}})}\right\Vert\\ \le&2a^{-2}\left\Vert\int_{-\mu}^0\varepsilon\phi'(-\theta)\frac{\int_\theta^0\frac{\t_1(s)-\t_2(s)}{\t_1(s)\t_2(s)}{\rm d}s}{(\theta+\frac{2\gamma}{a})^2}{\rm d}\theta\right\Vert\\
\le&8a^{-2}\int_{-\mu}^0\frac{\varepsilon\phi'(-\theta)}{\theta+\frac{2\gamma}{a}}{\rm d}\theta\Vert\t_1-\t_2\Vert.
\end{align*}
It follows that $M$ is a contraction map if
\begin{align*}
8a^{-2}\int_{-\mu}^0\frac{\varepsilon\phi'(-\theta)}{\theta+\frac{2\gamma}{a}}{\rm d}\theta<1.
\end{align*}
This constitutes our third restriction on $\mu$, it clearly implies the first one. The result now follows from a standard fixed-point principle \cite{line}.\end{proof}
This concludes the proof of Lemma \ref{lkb2}.\end{proof}

For any $a>0$, the unique negative solution $f(\eta)$ defined in a right-neighbourhood of $\eta=a$, which satisfies the boundary conditions (\ref{kkb1}), may be uniquely continued forward as a function of $\eta$.  We now show that the solution can be continued forward to $\eta\rightarrow\infty$.

\begin{lemma}
For given $a,\gamma$, the unique local solution in Lemma \ref{lkb2} can be continued forward to $\eta\rightarrow\infty$.
\end{lemma}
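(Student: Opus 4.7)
The plan is to mimic the strategy used for Lemma \ref{lbw}: assume the unique local solution from Lemma \ref{lkb2} has been extended forward to a maximal interval $(a,\eta^*)$ and argue that $\eta^*=\infty$ by establishing a priori bounds on $f$ and $f'$ on $[a+\delta,\eta^*)$ for any small $\delta>0$, and then invoking the standard continuation result \cite[Theorem 1.186]{odea} used previously.

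The first step is to integrate (\ref{k2}) from $a$ to $\eta$. Using the boundary condition $\varepsilon\phi'(-f(s))f'(s)\to -\gamma$ as $s\searrow a$, this produces
\begin{align*}
\varepsilon\phi'(-f(\eta))\,f'(\eta)=-\gamma-\frac{1}{2}\int_a^\eta s f'(s)\,ds.
\end{align*}
Since $f'<0$ by Lemma \ref{mono}, both sides are negative, and rearranging gives $\frac{1}{2}\int_a^\eta s|f'(s)|\,ds\le\gamma$. Combining $s\ge a$ with $f(a)=0$ then yields the uniform a priori bound $|f(\eta)|\le 2\gamma/a$ throughout $(a,\eta^*)$.

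The second step is to bound $f'$. The same identity implies $\varepsilon\phi'(-f(\eta))|f'(\eta)|\le\gamma$. Fixing any small $\delta>0$, the strict monotonicity of $f$ from Lemma \ref{mono} guarantees $|f(\eta)|\ge|f(a+\delta)|>0$ for $\eta\ge a+\delta$, whence $\phi'(-f(\eta))\ge\phi'(|f(a+\delta)|)>0$ since $\phi'$ is strictly increasing with $\phi'(0)=0$. This gives the uniform upper bound
\begin{align*}
|f'(\eta)|\le\frac{\gamma}{\varepsilon\,\phi'(|f(a+\delta)|)}\qquad\text{for all }\eta\in[a+\delta,\eta^*).
\end{align*}

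With $(f,f')$ thereby confined to a compact subset of the region where the corresponding first-order system associated to (\ref{k2}) has a Lipschitz right-hand side, \cite[Theorem 1.186]{odea} allows the solution to be continued past any finite $\eta^*$, forcing $\eta^*=\infty$. The main subtlety to handle carefully is that the continuation argument must be invoked on $[a+\delta,\eta^*)$ rather than on all of $(a,\eta^*)$, since $|f'|$ is genuinely unbounded as $\eta\searrow a$ (the product $\varepsilon\phi'(-f)f'$ tends to the finite nonzero limit $-\gamma$ while $\phi'(-f)\to 0$); the local existence supplied by Lemma \ref{lkb2} already covers the behaviour on the small interval $(a,a+\delta)$, so no additional work is needed at the left endpoint.
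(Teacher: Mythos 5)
Your proposal is correct and follows essentially the same strategy as the paper: establish a priori bounds on $f$ and on $f'$ away from $\eta=a$, then invoke \cite[Theorem 1.186]{odea} to continue the solution to $\eta\to\infty$. The only difference is cosmetic — the paper obtains the bound on $f$ by integrating the inequality $-f'\le\frac{2\varepsilon}{a}[\phi'(-f)f']'$ from $2a$ and bounds $-f'$ via the estimate $-f'(\eta)\le 4\gamma/(\eta^2-a^2)$ of Lemma \ref{ld0}, whereas you integrate directly from $\eta=a$ using the flux condition, which is if anything slightly cleaner.
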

\begin{proof}We have from (\ref{k2}) that
\begin{align}
-f'(\eta)\le\frac{2\varepsilon}{a}[\phi'(-f)f']'.\label{ns1}
\end{align}
Integrating (\ref{ns1}) from $2a$ to $\eta$ yields
\begin{align*}
-\varepsilon\phi'(-f(2a))f'(2a)-\frac{a}{2}f(2a)\ge-\varepsilon\phi'(-f(\eta))f'(\eta)-\frac{a}{2}f(\eta),
\end{align*}
then we know that $-\varepsilon\phi'(-f(\eta))f'(\eta)-\frac{a}{2}f(\eta)$ is bounded above by some positive constant $C$, so $f$ is bounded. The boundedness of $-f'(\eta)$ for $\eta>a+\frac{\delta}{2}$ follows similarly to (\ref{fpr1}) for $\delta>0$, so it follows from \cite[Theorem 1.186]{odea} that the solution of (\ref{k2}) can be continuous forward to $\eta\rightarrow\infty$.
\end{proof}

\subsection{Properties of $d(a,\gamma)$}
 Now define
\begin{align*}
 d(a,\gamma)=\displaystyle\lim_{\eta\rightarrow\infty}f(\eta;a,\gamma).
 \end{align*}
  The following discussions on $d(a,\gamma)$ are used in proving existence of self-similar solution by shooting from $\eta=a$ with $\gamma$, the derivatives of $\phi(f)$ at $\eta=a$, to $\displaystyle\lim_{\eta\rightarrow\infty}f(\eta;a,\gamma)$. The strategy in studying the properties of $d(a,\gamma)$ is similar to that used to prove the properties of $b(a,\gamma)$, but some arguments are more involved since $(a,\infty)$ is unbounded.

\begin{lemma}\label{ldg}
$d(a,\gamma)$ has the following properties with fixed $a$:
\begin{itemize}
\item[{\rm (i)}] $d(a,\gamma)$ is strictly monotonically decreasing in $\gamma$$;$
\item[{\rm (ii)}] $\displaystyle\lim_{\gamma\rightarrow\infty}d(a,\gamma)=-\infty$;
\item[{\rm (iii)}] $\displaystyle\lim_{\gamma\rightarrow0}d(a,\gamma)=0$.
\end{itemize}
\end{lemma}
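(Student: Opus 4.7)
My plan is to treat each of (i)--(iii) using the integrated form of equation (\ref{k2}), obtained by integrating from $a$ to $\eta$ and applying (\ref{kkb1}):
\begin{align*}
-\varepsilon\phi'(-f(\eta))f'(\eta)=\gamma+\frac{1}{2}\int_a^\eta sf'(s)\,{\rm d}s.
\end{align*}
Since $f'<0$ and $s\ge a>0$, this yields both the upper bound $-\varepsilon\phi'(-f)f'\le\gamma$ and, via $sf'(s)\ge\eta f'(s)$ for $s\le\eta$, the lower bound $-\varepsilon\phi'(-f(\eta))f'(\eta)\ge\gamma+\frac{1}{2}\eta f(\eta)$. Integrating the identity a second time gives the useful form $\varepsilon\phi(-f(\eta))=\gamma(\eta-a)+\tfrac{1}{2}\int_a^\eta\int_a^s rf'(r)\,{\rm d}r\,{\rm d}s$.

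For (i), I would mirror Lemma \ref{lbg}(i). Suppose $\gamma_1>\gamma_2>0$ and, for contradiction, $d(a,\gamma_1)\ge d(a,\gamma_2)$. The twice-integrated identity shows $\varepsilon\phi(-f_{\gamma_i}(\eta))=\gamma_i(\eta-a)+o(\eta-a)$ near $\eta=a$, so $f_{\gamma_1}<f_{\gamma_2}$ just to the right of $a$. Either the trajectories cross at some first $\eta_0\in(a,\infty)$, where $f_{\gamma_1}'(\eta_0)\ge f_{\gamma_2}'(\eta_0)$, or they remain strictly ordered on $(a,\infty)$ with $d(a,\gamma_1)=d(a,\gamma_2)$. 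In the crossing case, subtracting the integrated identities at $\eta_0$ and applying integration by parts (boundary terms vanish at both $a$ and $\eta_0$) leaves
\begin{align*}
\varepsilon\phi'(-\bar f)\big[f_{\gamma_1}'(\eta_0)-f_{\gamma_2}'(\eta_0)\big]+(\gamma_1-\gamma_2)=\tfrac{1}{2}\int_a^{\eta_0}(f_{\gamma_1}-f_{\gamma_2})\,{\rm d}s,
\end{align*}
with left side positive and right side negative, a contradiction. In the equal-limit case, I would pass $\eta\to\infty$ in the integrated identity (using $f'\to 0$ from Lemma \ref{ld0}) to obtain $\int_a^\infty sf_{\gamma_i}'\,{\rm d}s=-2\gamma_i$; subtracting and integrating by parts then yields $\lim_{\eta\to\infty}\eta(f_{\gamma_1}-f_{\gamma_2})(\eta)-\int_a^\infty(f_{\gamma_1}-f_{\gamma_2})\,{\rm d}s=-2(\gamma_1-\gamma_2)<0$, which contradicts $f_{\gamma_1}<f_{\gamma_2}$ provided $\eta(f_{\gamma_1}-f_{\gamma_2})\to 0$ and $f_{\gamma_i}-d_i\in L^1(a,\infty)$. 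Both of these follow from the exponential-type decay of $F(\eta):=-\varepsilon\phi'(-f)f'$, which satisfies the separable equation $F'/F=-\eta/(2\varepsilon\phi'(-f))$, whence $F$ (and hence $f-d$) decays faster than any polynomial.

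For (ii), I would argue by contradiction: if $d(a,\gamma)\ge-M$ along a sequence $\gamma\to\infty$, then $|f(\eta)|\le M$ on $(a,a+1)$ by monotonicity, so the lower bound gives $\varepsilon\phi'(-f)(-f')\ge\gamma-(a+1)M/2$ on that interval; integrating in $\eta$ and using $\int_a^{a+1}\phi'(-f)(-f')\,{\rm d}\eta=\phi(|f(a+1)|)\le\phi(M)$ yields $\varepsilon\phi(M)\ge\gamma-(a+1)M/2$, incompatible with $\gamma\to\infty$. For (iii), I would split $|d(a,\gamma)|\le|f(a_1)|+\int_{a_1}^\infty|f'|\,{\rm d}\eta$ for a fixed $a_1>a$; the upper bound on $\varepsilon\phi'(-f)(-f')$ integrated over $(a,a_1)$ gives $|f(a_1)|\le\phi^{-1}(\gamma(a_1-a)/\varepsilon)\to 0$, while Lemma \ref{ld0} controls the tail by $4\gamma\int_{a_1}^\infty(\eta^2-a^2)^{-1}\,{\rm d}\eta=C(a,a_1)\gamma\to 0$.

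The main obstacle is the equal-limit sub-case of (i): ruling it out requires decay of $f-d$ faster than the polynomial rate given directly by Lemma \ref{ld0}, which I would obtain by integrating the logarithmic derivative $F'/F$ and using that $\phi'(-f(\eta))$ is bounded below for large $\eta$. All remaining steps are routine manipulations of the integrated identity combined with bounds already established.
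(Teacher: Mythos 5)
Your proof is correct, and for part (i) it follows the same comparison-of-trajectories strategy that the paper invokes (the paper simply says (i) "follows from the similar argument to that of Lemma \ref{lbg} (i)"). The genuinely valuable difference is that you explicitly handle the subcase where $f_{\gamma_1}<f_{\gamma_2}$ on all of $(a,\infty)$ with $d(a,\gamma_1)=d(a,\gamma_2)$: in the $b(a,\gamma)$ setting the "limit" is taken at the finite point $\eta=0$, so equality of the limits is itself a touching point and the crossing argument applies directly, whereas for $d(a,\gamma)$ the limit sits at $\eta=\infty$ and one must pass to the limit in the integrated identity and justify $\eta(f_{\gamma_1}-f_{\gamma_2})\to 0$ and integrability of $f_{\gamma_i}-d_i$. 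Your Gaussian-decay estimate via $F'/F=-\eta/(2\varepsilon\phi'(-f))$ with $\phi'(-f)$ pinched between $\phi'(-f(\eta_1))>0$ and $\phi'(-d)$ does exactly this, and it closes a gap the paper leaves implicit. For (ii) the paper instead combines the bound $-f'(\eta_0)\le 4\gamma/(2\eta_0-1)$ from Lemma \ref{ld0} with the integrated identity at a single large $\eta_0$, obtaining $M\eta_0>\gamma$ for a fixed $\eta_0$; your version integrates the pointwise lower bound $\varepsilon\phi'(-f)(-f')\ge\gamma-(a+1)M/2$ over $(a,a+1)$ to get $\varepsilon\phi(M)\ge\gamma-(a+1)M/2$ — equally valid and arguably cleaner. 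For (iii) the paper's argument is shorter than yours: from $-\tfrac12\int_a^\eta sf'\,{\rm d}s\ge-\tfrac a2\int_a^\eta f'\,{\rm d}s$ and $f'\to0$ one gets $0\le-\tfrac a2 d(a,\gamma)\le\gamma$ in one line, whereas you split into $|f(a_1)|$ plus a tail; both are fine, but you could simplify. No gaps.
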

\begin{proof}The proof of (i) follows from the similar argument to that of Lemma \ref{lbg} (i).

(ii) Suppose $d(a,\gamma)$ does not satisfy $\displaystyle\lim_{\gamma\rightarrow\infty}d(a,\gamma)=-\infty$. Then there exists $M>0$ such that $d(a,\gamma)\ge-M$ for all $\gamma$, which implies $|f(\eta_0)|\le M$ for each fixed $\eta_0>a$.

Integrating (\ref{k2}) from $a$ to $\eta_0$ gives
\begin{align*}
\gamma=-\frac{1}{2}\eta_0f(\eta_0)+\frac{1}{2}\int_a^{\eta_0}f(s){\rm d}s-\varepsilon\phi'(-f(\eta_0))f'(\eta_0).
\end{align*}
Since $\displaystyle\int_a^{\eta_0}f(s){\rm d}s$ is negative, using the upper bound of $|f|$, we get
\begin{align*}
-\varepsilon\phi'(M)f'(\eta_0)>\gamma-\frac{M\eta_0}{2}-\frac{1}{2}\int_a^{\eta_0}f(s){\rm d}s>\gamma-\frac{M\eta_0}{2}.
\end{align*}
By (\ref{fpr1}) we know that for all $\eta_0>a+1$
\begin{align*}
\frac{4\gamma\phi'(M)}{2\eta_0-1}>-\phi'(M)f'(\eta_0)>\gamma-\frac{M\eta_0}{2}.
\end{align*}
If we rewrite as $\displaystyle\frac{M\eta_0}{2}>\gamma\left(1-\frac{4\phi'(M)}{2\eta_0-1}\right)$ and choosing and fixing $\eta_0$ sufficient large such that $1-\displaystyle\frac{4\phi'(M)}{2\eta_0-1}>\frac{1}{2}$, we then have $M\eta_0>\gamma$ for all $\gamma$. But this is a contradiction, so if $\gamma\rightarrow\infty$, we have $d(a,\gamma)\rightarrow-\infty$.

(iii) Integrating (\ref{k2}) from $a$ to $\eta$ and letting $\eta\rightarrow\infty$, together with $\displaystyle\lim_{\eta\rightarrow\infty}f'(\eta)=0$ by (\ref{f0}), we get
\begin{align*}
-\frac{a}{2}\int_a^\infty f'(s){\rm d}s=-\frac{a}{2}d(a,\gamma)\le\gamma.
\end{align*}

Then the result follows from
\begin{align*}
-\frac{a}{2}d(a,\gamma)\rightarrow0 \quad {\rm as} \ \gamma\rightarrow0.
\end{align*}
\end{proof}

\begin{lemma}\label{lda}
$d(a,\gamma)$ has the following properties with fixed $\gamma$:
\begin{itemize}
\item[{\rm (i)}] $d(a,\gamma)$ is strictly monotonically increasing in $a$$;$
\item[{\rm (ii)}]$\displaystyle\lim_{a\rightarrow\infty}d(a,\gamma)=0$.
\end{itemize}
\end{lemma}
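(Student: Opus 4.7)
The plan for part (ii) is direct: integrate (\ref{k2}) from $a$ to $\infty$ and use $\varepsilon\phi'(-f(\eta))f'(\eta)\to 0$ as $\eta\to\infty$ (which follows from (\ref{f0}) since $\phi'(-f)$ remains finite) to derive the identity
\[\gamma=\tfrac{1}{2}\int_a^\infty s(-f'(s))\,ds.\]
Since $s\ge a$ throughout the domain of integration, $\gamma\ge\tfrac{a}{2}\int_a^\infty(-f'(s))\,ds=-\tfrac{a}{2}d(a,\gamma)$, whence $0>d(a,\gamma)\ge-2\gamma/a\to 0$ as $a\to\infty$.

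For part (i), I would argue by contradiction in the spirit of Lemma \ref{lbg} (i). Assume $a_1<a_2$ but $d(a_1,\gamma)\ge d(a_2,\gamma)$. By Lemma \ref{mono}, $f_{a_1}(a_2)<0=f_{a_2}(a_2)$, so if strict inequality $d(a_1,\gamma)>d(a_2,\gamma)$ holds, continuity forces a first crossing $\eta_0\in(a_2,\infty)$ with $f_{a_1}(\eta_0)=f_{a_2}(\eta_0)=:f_0$ and $f_{a_1}<f_{a_2}$ on $(a_2,\eta_0)$. Integrating (\ref{k2}) from $a_i$ to $\eta_0$, using $f_i(a_i)=0$, the boundary condition $\varepsilon\phi'(-f_i)f_i'\to-\gamma$ at $a_i^+$, and integration by parts on $\int s f_i'$, yields
\[\varepsilon\phi'(-f_0)f_i'(\eta_0)=-\gamma-\tfrac{1}{2}\eta_0 f_0+\tfrac{1}{2}\int_{a_i}^{\eta_0}f_i(s)\,ds.\]
Subtracting the two equations gives
\[\varepsilon\phi'(-f_0)[f_1'(\eta_0)-f_2'(\eta_0)]=\tfrac{1}{2}\int_{a_1}^{a_2}f_1+\tfrac{1}{2}\int_{a_2}^{\eta_0}(f_1-f_2),\]
whose right-hand side is strictly negative, while the left-hand side is non-negative since $(f_1-f_2)'(\eta_0)\ge 0$ at the first crossing. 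Contradiction.

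The main obstacle is the degenerate case $d(a_1,\gamma)=d(a_2,\gamma)=:d<0$, when no finite crossing exists and $f_{a_1}<f_{a_2}$ throughout $(a_2,\infty)$. Here I would pass to the asymptotic identity $-\eta f_i(\eta)+\int_{a_i}^\eta f_i\to 2\gamma$ (which follows from the integrated form of (\ref{k2}) by letting $\eta\to\infty$ and using $f_i'\to 0$), and subtract for $i=1,2$ to obtain
\[\lim_{\eta\to\infty}\Bigl[\eta(f_2-f_1)(\eta)-\int_{a_2}^\eta(f_2-f_1)(s)\,ds\Bigr]=-\int_{a_1}^{a_2}f_1(s)\,ds>0.\]
Using (\ref{fd}), one can show $(f_2-f_1)(\eta)=O(1/\eta)$, so $\eta(f_2-f_1)$ is bounded. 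If $\int_{a_2}^\infty(f_2-f_1)$ diverges, the left-hand side above tends to $-\infty$; otherwise, the positive finite limit forces $\eta(f_2-f_1)\to M>0$, so that $(f_2-f_1)\sim M/\eta$, whereby $\int_{a_2}^\infty(f_2-f_1)$ diverges logarithmically. Either subcase yields the desired contradiction.
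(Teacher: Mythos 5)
Your proposal is correct. Part (ii) is essentially the paper's own argument: the paper integrates (\ref{k2}) from $a$ to $\eta$ and uses $f'<0$ to get the pointwise bound $f(\eta)>-2\gamma/a$, while you pass to the limit first using (\ref{f0}); both yield $0>d(a,\gamma)\ge-2\gamma/a\rightarrow0$. For part (i) the paper says only that the proof ``follows from similar arguments to Lemma \ref{lbg} (i)'', and your Case A is exactly that crossing argument transplanted to $(a,\infty)$. The genuinely valuable addition is your treatment of the degenerate case $d(a_1,\gamma)=d(a_2,\gamma)$ with no finite crossing: on the bounded interval $[0,a)$ of Lemma \ref{lbg} (i) equality of the shot values gives a crossing at $\eta_0=0$, but on the unbounded interval ``equal limits at infinity'' produces no such point, so the cross-reference does not literally cover this case. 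Your asymptotic identity $-\eta f_i(\eta)+\int_{a_i}^{\eta}f_i\rightarrow2\gamma$, combined with the $O(1/\eta)$ bound on $f_2-f_1$ from (\ref{fd}) and the dichotomy on the convergence of $\int_{a_2}^{\infty}(f_2-f_1)$, closes this gap cleanly (note the strictness of the conclusion also uses $\int_{a_1}^{a_2}f_1<0$, which holds since $f_1$ is strictly negative there). One small point worth recording explicitly: in Case A you need $\phi'(-f_0)>0$, i.e.\ $f_0<0$, to conclude the left-hand side is nonnegative rather than identically zero; this holds because the first crossing occurs strictly to the right of $a_2$ and $f_{a_2}$ is strictly decreasing, and even if $f_0=0$ the contradiction with the strictly negative right-hand side survives.
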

\begin{proof}The proof of (i) follows from the similar arguments to that of Lemma \ref{lbg} (i).

(ii) Integrating (\ref{k2}) from $a$ to $\eta$ we get
\begin{align*}
\varepsilon\phi'(-f(\eta))f'(\eta)\ge-\gamma-\frac{a}{2}\int^\eta_af'(s){\rm d}s=\frac{a}{2}\left(-\frac{2\gamma}{a}-f(\eta)\right),
\end{align*}
then the result follows from the fact that $f'<0$, gives $-\frac{2\gamma}{a}-f<0$.
\end{proof}

Next, we prove $f$ is a continuous function of $a$ and $\gamma$ respectively by using an iterative method. This result will be used to prove $d(a,\gamma)$ is a continuous function of both $a$ and $\gamma$.
\begin{lemma}
For each fixed $\eta^*>a$, if $f$ satisfies (\ref{k2}) and (\ref{kkb1}), then
\begin{itemize}
\item[{\rm (i)}] $f(\eta^*;a,\gamma)$ is a continuous function of $\gamma$ for fixed $a$;
\item[{\rm (ii)}] $f(\eta^*;a,\gamma)$ is a continuous function of $a$ for fixed $\gamma$.
\end{itemize}
\label{lfc}
\end{lemma}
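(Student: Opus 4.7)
The plan is to mimic the two-step continuity arguments used for $b(a,\gamma)$ in Lemma \ref{lbg}(ii) and Lemma \ref{lba}(iii), adapted to the right-neighbourhood of $\eta=a$ via the integro-differential reformulation of Lemma \ref{lkb2}, and then to extend continuity from a small right-neighbourhood of $\eta=a$ out to the fixed value $\eta^{*}$ by standard continuous-dependence theory for regular ODEs.

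First I would establish continuity on a common small right-neighbourhood $(a, a+\delta)$. For part (i), fix $a$ and let $\gamma_{1},\gamma_{2}\in[\gamma_{0},\gamma_{3}]$ with $\gamma_{0}>0$. Using the function $\tau(f;\gamma)$ and the fixed-point map $M$ from the proofs of Lemmas \ref{lkb2}--\ref{lkb3}, I would form the difference $\tau(f;\gamma_{1})-\tau(f;\gamma_{2})$, extract from the numerators $\int_{\theta}^{0}\frac{1}{\tau(s)}{\rm d}s-\frac{2\gamma}{a}$ a term proportional to $|\gamma_{1}-\gamma_{2}|$, and bound the remaining double integral by a constant multiple of $\int_{-\mu}^{0}\frac{\varepsilon\phi'(-\theta)}{\theta+\frac{2\gamma_{0}}{a}}{\rm d}\theta$. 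Gronwall's inequality then yields a Lipschitz bound $|\tau(f;\gamma_{1})-\tau(f;\gamma_{2})|\le K|\gamma_{1}-\gamma_{2}|$ uniform for $f\in[-\mu,0]$, and the mean-value-theorem trick used at the end of the proof of Lemma \ref{lbg}(ii) translates this into continuity of $f(\eta;a,\gamma)$ in $\gamma$ on $(a,a+\delta)$. The argument for (ii) is analogous, with an extra term involving $a_{2}^{2}-a_{1}^{2}$ appearing exactly as in the proof of Lemma \ref{lba}(iii).

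Next I would extend this near-boundary continuity out to the given $\eta^{*}>a$ by choosing $\delta$ so small that $a+\delta\le\eta^{*}$, and then applying standard continuous-dependence theory to (\ref{k2}) on the compact interval $[a+\delta,\eta^{*}]$. On this interval the equation is non-degenerate: since $f$ is monotonically decreasing, $-f(\eta)\ge -f(a+\delta)>0$, so $\phi'(-f(\eta))$ is bounded below away from zero, and by Lemma \ref{ld0} $-f'(\eta)$ is bounded above. Hence (\ref{k2}) reduces to a regular first-order system for $(f,f')$ whose solutions depend continuously on the Cauchy data at $\eta=a+\delta$. Combining this with the continuity of those Cauchy data in $\gamma$ (respectively $a$) from the first step gives continuity of $f(\eta^{*};a,\gamma)$ in the relevant parameter.

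The main obstacle is the degeneracy of (\ref{k2}) at $\eta=a$, where $\phi'(-f)\to 0$ and classical continuous-dependence theorems cannot be applied on any interval containing $a$ as an endpoint. The integro-differential reformulation (\ref{baa6}) is precisely what desingularises the problem, and the technical core of the proof lies in obtaining uniform difference estimates for $\tau$ near $f=0$. Extra care is required compared with Lemmas \ref{lbg}--\ref{lba} because here $\tau(f)$ takes values in $[\tfrac{1}{2},1]$ rather than $[0,\tfrac{1}{2}]$, and because $\gamma$ must be kept bounded below away from $0$ in order to control the factor $\bigl(\theta+\tfrac{2\gamma}{a}\bigr)^{-1}$ appearing throughout the estimates.
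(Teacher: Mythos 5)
Your first step---continuity of $f(\eta;a,\gamma)$ in $\gamma$ (resp.\ $a$) on a small right-neighbourhood $(a,a+\delta)$ via the $\tau$-reformulation (\ref{baa6}), a Gronwall estimate on $|\tau(f;\gamma_1)-\tau(f;\gamma_2)|$, and the mean-value inversion from Lemma \ref{lbg}(ii)---is essentially the argument the paper gives for $\eta^*\in(a,\eta_0]$, including your correct observations that $\tau$ now lives in $[\tfrac12,1]$ and that $\gamma$ must stay in a compact subinterval of $(0,\infty)$. Where you genuinely diverge is the extension to an arbitrary $\eta^*>a$: the paper does \emph{not} invoke continuous dependence for the non-degenerate ODE on $[a+\delta,\eta^*]$; instead it propagates continuity iteratively from $\eta_0$ to $\eta_1>\eta_0$ by a two-case argument (Cases A and B in the proof of Lemma \ref{lfc}) that combines the monotonicity of $f(\eta;a,\cdot)$ from Lemma \ref{ldg}(i), the gradient bound (\ref{fpr1}), and integral identities obtained by integrating (\ref{k2}) over $(ka,\eta)$ and $(a,ka)$. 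Your route is cleaner and more standard, and it is viable: on $[a+\delta,\eta^*]$ one has $f(\eta)\le f(a+\delta;a,\gamma_{\min})<0$ by monotonicity in $\eta$ and in $\gamma$, so $\phi'(-f)$ is bounded below and the system for $(f,\varepsilon\phi'(-f)f')$ is regular with a priori bounds guaranteeing existence on the whole interval. The one point you should make explicit is that classical continuous dependence needs continuity in the parameter of \emph{both} components of the Cauchy data at $\eta=a+\delta$, whereas the near-boundary $\tau$-estimate directly controls only the value $f(a+\delta;a,\gamma)$; continuity of the flux $\varepsilon\phi'(-f)f'$ at $a+\delta$ then follows by integrating (\ref{k2}) from $a$ to $a+\delta$ and using the boundary condition (\ref{kkb1}) together with the uniform (in $\eta$) continuity of $f$ on $(a,a+\delta]$ already established. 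What the paper's iterative monotonicity argument buys is precisely that it never needs the derivative data or a Lipschitz vector field; what your argument buys is a shorter, non-iterative extension step once that small gap is filled.
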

\begin{proof}First we prove $f(\eta^*;a,\gamma)$ is a continuous function $\gamma$.

Let $0<\gamma_0\le \gamma_1<\gamma_2\le \gamma_3$. Recall the function $\t(f)$ from the proof of Lemma \ref{lkb2} and set $\tau(f)=\tau(f;\gamma_i)=\tau_i$, where $i=1,2$. Let $\eta\in(a,\eta_0]$ satisfies $\frac{1}{2}\le\t(f)<1$ and
\begin{align}
\eta_0f(\eta_0;a,\gamma_3)-2\gamma_0<0,
\end{align}
for $f(\eta)\in[-\mu,0]$. Then
\begin{align*}
|\t(f;\gamma_1)-\t(f;\gamma_2)|=&\left|\frac{2a^{-2}\int_f^0\left(\frac{\varepsilon\phi'(-\theta)}{\int^0_\theta\frac{1}{\t(s)}{\rm d}s-\frac{2\gamma_1}{a}}-\frac{\varepsilon\phi'(-\theta)}{\int^0_\theta\frac{1}{\t(s)}{\rm d}s-\frac{2\gamma_2}{a}}\right){\rm d}\theta}{\left(1-2a^{-2}\int_f^0\frac{\varepsilon\phi'(-\theta)}{\int_\theta^0 \frac{1}{\tau(s)}{\rm d}s-\frac{2\gamma_1}{a}}{\rm d}\theta\right)\left(1-2a^{-2}\int_f^0\frac{\varepsilon\phi'(-\theta)}{\int_\theta^0 \frac{1}{\tau(s)}{\rm d}s-\frac{2\gamma_2}{a}}{\rm d}\theta\right)}\right|\\
\le &2a^{-2}\left|\int_f^0\frac{\varepsilon\phi'(-\theta)\left(\int_\theta^0\frac{1}{\t_2(s)}-\frac{1}{\t_1(s)}{\rm d}s+\frac{2}{a}(\gamma_1-\gamma_2)\right)}{\left(\int_\theta^0\frac{1}{\t_1(s)}{\rm d}s-\frac{2\gamma_1}{a}\right)\left(\int_\theta^0\frac{1}{\t_2(s)}{\rm d}s-\frac{2\gamma_2}{a}\right)}{\rm d}\theta\right|,
\end{align*}
since $\t(f)<1$ implies $1-2a^{-2}\int_f^0\frac{\varepsilon\phi'(-\theta)}{\int_\theta^0 \frac{1}{\tau(s)}{\rm d}s-\frac{2\gamma}{a}}{\rm d}\theta>1$.

Then we have
\begin{align*}
|\t(f;\gamma_1)-\t(f;\gamma_2)|\le &2a^{-2}\left|\int_f^0\frac{\varepsilon\phi'(-\theta)\left(\int_\theta^0\frac{\t_1(s)-\t_2(s)}{\t_1(s)\t_2(s)}{\rm d}s+\frac{2}{a}(\gamma_1-\gamma_2)\right)}{\left(\int_\theta^0\frac{1}{\t_1(s)}{\rm d}s-\frac{2\gamma_1}{a}\right)\left(\int_\theta^0\frac{1}{\t_2(s)}{\rm d}s-\frac{2\gamma_2}{a}\right)}{\rm d}\theta\right|\\ \le&2a^{-2}\left|\int_f^0\frac{\varepsilon\phi'(-\theta)\left(4\int_\theta^0\t_1(s)-\t_2(s){\rm d}s+\frac{2}{a}(\gamma_1-\gamma_2)\right)}{\left(\int_\theta^0\frac{1}{\t_1(s)}{\rm d}s-\frac{2\gamma_1}{a}\right)\left(\int_\theta^0\frac{1}{\t_2(s)}{\rm d}s-\frac{2\gamma_2}{a}\right)}{\rm d}\theta\right|,
\end{align*}
since $\t(f)>\frac{1}{2}$.

Consider the function
\begin{align*}
L(\theta;\gamma)=\left(\theta-\frac{2\gamma}{a}\right)^{-1}\left(\int_\theta^0\frac{1}{\t(s)}{\rm d}s-\frac{2\gamma}{a}\right)>0, \quad -\mu\le\theta\le0.
\end{align*}
$L$ is a monotonically increasing function of $\theta$ since
\begin{align*}
\frac{\partial L}{\partial \theta}=\frac{\int_\theta^0\frac{1}{\t(\theta)}-\frac{1}{\t(s)}{\rm d}s+\frac{2\gamma}{a}\left(1+\t(\theta)\right)}{\left(\theta-\frac{2\gamma}{a}\right)^2}>0,
\end{align*}
and $L\rightarrow 1$ as $\theta\rightarrow0$. Therefore $L(f(\eta_0;a,\gamma);\gamma)\le L(\theta;\gamma)<1$ when $-\mu\le\theta\le0$.

We can now write
\begin{align*}
|\t(f;\gamma_1)-\t(f;\gamma_2)|\le A(\gamma_2-\gamma_1)+B\int_f^0\frac{\varepsilon\phi'(-\theta)}{\frac{2\gamma_1}{a}-\theta}\max_{f\le s\le0}|\t(s;\gamma_1)-\t(s;\gamma_2)|{\rm d}\theta,
\end{align*}
where
\begin{align*}
&A=16a^{-1}\gamma_0^2\varepsilon\phi'(-f(\eta_0;a,\gamma_1))[L(f(\eta_0;a,\gamma_1))]^{-1}[L(f(\eta_0;a,\gamma_2))]^{-1},\\
&B=8a^{-2}[L(f(\eta_0;a,\gamma_1))]^{-1}[L(f(\eta_0;a,\gamma_2))]^{-1},
\end{align*}
and if we set $\omega(f)=\displaystyle\max_{f\le\theta\le0}|\t(\theta;\gamma_1)-\t(\theta;\gamma_2)|$, then
\begin{align*}
\omega(f)\le A(\gamma_2-\gamma_1)+B\int_f^0\frac{\varepsilon\phi'(-\theta)}{\frac{2\gamma_1}{a}-\theta}\omega(\theta){\rm d}\theta.
\end{align*}
Define the function
\begin{align*}
M(\gamma)=L(f(\eta_0;a,\gamma);\gamma) =(a\,f(\eta_0;a,\gamma);\gamma)-2\gamma)^{-1}\left(\int_a^{\eta_0}f(s;a,\gamma){\rm d}s-\eta_0f(\eta_0;a,\gamma)-2\gamma\right).
\end{align*}
It was shown in Lemma \ref{ldg} (i) that, since $\gamma_i\le\gamma_3$, $f(\eta;\gamma_i)\ge f(\eta;\gamma_3)$ on $(a,\eta_0]$. Since $f(\eta;\gamma_i)<0$, it follows that
\begin{align*}
M(\gamma_i)\ge(af(\eta_0;a,\gamma_3);\gamma_3)-2\gamma_3)^{-1}\left(\int_a^{\eta_0}f(s;a,\gamma){\rm d}s-\eta_0f(\eta_0;a,\gamma_3)-2\gamma_0\right)>0.
\end{align*}
Thus it can be seen that the constants $A$ and $B$ are uniformly bounded for $\gamma\in[\gamma_0,\gamma_3]$.

It now follows from Gronwall's Lemma (\cite[p24]{ode}) and the fact that $f(\eta;a,\gamma)\ge f(\eta_0;a,\gamma_3)$, $\t(f;\gamma)$ satisfies a Lipshichtz condition in $\gamma$ which is uniform with respect to $f\in[f(\eta_0;a,\gamma_3),0]$ and $\gamma\in[\gamma_0,\gamma_3]$.

The observation that $\t$ is continuously differentiable on $\left[\frac{1}{2},1\right)$ with
\begin{align*}
\frac{\partial \t}{\partial f}=\frac{1}{2}a^{-2}\frac{\varepsilon\phi'(-f)}{\frac{2\gamma}{a}-f}[L(f;\gamma)]^{-1}\ge\frac{1}{2}a^{-2}\frac{\varepsilon\phi'(-f)}{\frac{2\gamma}{a}-f}.
\end{align*}
Then the result follows from the similar argument to that in proof of Lemma \ref{lbg} (ii) that when the function $f(\eta;a,\gamma)$ for $\eta\in(a,\eta_0]$ is Lipschitz continuous in $\gamma$ and the Lipschitz constant is uniform in $\gamma\in[\gamma_0,\gamma_3]$, since $\eta=\eta_0$ is not special.

Now we prove $f(\eta_1;a,\gamma)$ is a continuous function when $\eta_1>\eta_0$. We consider in two cases.

\noindent{\bf Case A. }Consider $\gamma_0>\gamma$. First, since $f'$ is bounded for $\eta>a+\frac{\zeta}{2}$, we can choose a fixed $\eta_1$ such that $|f(\eta_0;a,\gamma_0)-f(\eta_1;a,\gamma_0)|<\frac{\delta}{2}$. We know there exists $\mu$ such that $|f(\eta_0;a,\gamma)-f(\eta_0;a,\gamma_0)|<\frac{\delta}{2}$ for $|\gamma-\gamma_0|<\mu$. Then
\begin{align*}
|f(\eta_0;a,\gamma)-f(\eta_1;a,\gamma_0)| \le|f(\eta_0;a,\gamma)-f(\eta_0;a,\gamma_0)|+|f(\eta_0;a,\gamma_0)-f(\eta_1;a,\gamma_0)|<\frac{\delta}{2}+\frac{\delta}{2}=\delta.
\end{align*}
Since $\gamma_0>\gamma$, then by Lemma \ref{ldg} (i) we have
\begin{align*}
f(\eta_1;a,\gamma_0)+\delta>f(\eta_0;a,\gamma)>f(\eta_1;a,\gamma)>f(\eta_1;a,\gamma_0),
\end{align*}
so that $f(\eta_1;a,\gamma)-f(\eta_1;a,\gamma_0)<\delta$ if $\gamma_0-\gamma<\mu$.

\noindent{\bf Case B. }Now consider $\gamma_0<\gamma$ and denote $f(\eta;a,\gamma)=f$ and $f(\eta;a,\gamma_0)=f_0$. We know that
\begin{align*}
-\frac{a}{2}f'(\eta)\le\varepsilon[\phi'(-f(\eta))f'(\eta)]',\quad \eta>a,
\end{align*}
letting $k>1$ and $ka<\eta_0$, integrating (\ref{k2}) from $ka$ to $\eta$ yield
\begin{align*}
-\varepsilon\phi'(-f(\eta))f'(\eta)-\frac{a}{2}f(\eta)\le-\varepsilon\phi'(-f(ka))f'(ka)-\frac{a}{2}f(ka).
\end{align*}
Letting $\eta\rightarrow\eta_1$ gives
\begin{align}
-f(\eta_1)<-\frac{2\varepsilon}{a}\phi'(-f(ka))f'(ka)-f(ka).
\label{dg4f}
\end{align}
Now consider the equation for $f_0$. Integrating from $a$ to $ka$, we get
\begin{align*}
-\frac{ka}{2}f_0(ka)+\frac{1}{2}\int_a^{ka}f_0(s){\rm d}s=\varepsilon\phi'(-f_0(ka))f_0(ka)+\gamma_0,
\end{align*}
we know that $f_0(\eta_1)<f_0(\eta)$ for $\eta\in(\eta_0,\eta_1]$, then
\begin{align}
f_0(\eta_1)<\frac{k}{k-1}f_0(ka)+\frac{2\varepsilon}{(k-1)a}\phi'(-f_0(ka))f_0'(ka)+\frac{2\gamma_0}{(k-1)a}.
\label{dg3f}
\end{align}
Combining (\ref{dg4f}) and (\ref{dg3f}) we have
\begin{align*}
f_0(\eta_1)-f(\eta_1)<f_0(ka)-f(ka)+\frac{1}{k-1}f_0(ka)+\frac{2\gamma_0}{(k-1)a}-\frac{2\varepsilon}{a}\phi'(-f(ka))f'(ka).
\end{align*}
Choosing $k$ such that $a<ka<\eta_1$ to satisfy $\displaystyle\frac{2\gamma_0}{(k-1)a}+\frac{f_0(ka)}{k-1}<\frac{\delta}{3}$. For $ka>a+\frac{\zeta}{2}$ we have $-f'(ka)<\displaystyle\frac{16\gamma}{4\zeta+\zeta^2}$ by (\ref{fd}). Now choose and fix $k$ so that $-\displaystyle\frac{2\varepsilon}{a}\phi'(-f(ka))f'(ka)<\frac{\delta}{3}$. With this $k$, we know there exists $\mu>0$ such that for $\gamma-\gamma_0<\mu$
\begin{align*}
f_0(ka)-f(ka)<\frac{\delta}{3}.
\end{align*}

Therefore $f(\eta_1;a,\gamma_0)-f(\eta_1;a,\gamma)<\delta$ if $\gamma-\gamma_0\le\mu$.

We can now conclude $f(\eta_1;a,\gamma)$ is a continuous function of $\gamma$ for fixed $a$. It can be prove iteratively that $f(\eta;a,\gamma)$ is a continuous function of $\gamma$ at fixed $\eta\in (a,\infty)$ with fixed $a$. Moreover, for fixed $\gamma$, $f(\eta;a,\gamma)$ is a continuous function of $a$ can be proved by using the similar argument.
\end{proof}

The following corollary is obtained directly from Lemma \ref{lfc}
\begin{corollary}\label{lcd}
If $f$ satisfies (\ref{k2}) and (\ref{kkb1}), then
\begin{itemize}
\item[{\rm (i)}] $d(a,\gamma)$ is a continuous function of $\gamma$ for fixed $a$;
\item[{\rm (ii)}] $d(a,\gamma)$ is a continuous function of $a$ for fixed $\gamma$.
\end{itemize}
\end{corollary}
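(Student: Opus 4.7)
The plan is to upgrade the pointwise-in-$\eta$ continuity statements of Lemma \ref{lfc} to continuity of the limit $d(a,\gamma)=\lim_{\eta\to\infty}f(\eta;a,\gamma)$ by first establishing that the convergence $f(\eta;a,\gamma)\to d(a,\gamma)$ is uniform when $\gamma$ (respectively $a$) ranges over a compact subset on which we wish to prove continuity. Once that uniformity is in hand, a standard $\varepsilon/3$ argument closes the proof.

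For part (i), fix $a>0$ and $\gamma_*>0$ and restrict attention to a compact interval $[\gamma_0,\gamma_3]$ containing $\gamma_*$ in its interior. Lemma \ref{ld0} supplies
\begin{align*}
-f'(\eta;a,\gamma)\le\frac{4\gamma}{\eta^2-a^2}\le\frac{4\gamma_3}{\eta^2-a^2},\quad\eta>a,\ \gamma\in[\gamma_0,\gamma_3].
\end{align*}
Since $f'<0$ and $f(\eta;a,\gamma)\searrow d(a,\gamma)$, integrating from $\eta$ to $\infty$ yields
\begin{align*}
|f(\eta;a,\gamma)-d(a,\gamma)|=\int_\eta^\infty(-f'(s;a,\gamma))\,{\rm d}s\le\int_\eta^\infty\frac{4\gamma_3}{s^2-a^2}\,{\rm d}s=:\rho(\eta),
\end{align*}
and $\rho(\eta)\to0$ as $\eta\to\infty$ since the integrand is $O(s^{-2})$. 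Given $\varepsilon>0$, choose and fix $\eta^*$ large enough that $\rho(\eta^*)<\varepsilon/3$; this gives $|f(\eta^*;a,\gamma)-d(a,\gamma)|<\varepsilon/3$ uniformly in $\gamma\in[\gamma_0,\gamma_3]$. By Lemma \ref{lfc}(i), $f(\eta^*;a,\cdot)$ is continuous at $\gamma_*$, so there exists $\delta>0$ such that $|\gamma-\gamma_*|<\delta$ implies $|f(\eta^*;a,\gamma)-f(\eta^*;a,\gamma_*)|<\varepsilon/3$. The triangle inequality then gives $|d(a,\gamma)-d(a,\gamma_*)|<\varepsilon$, proving (i).

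For part (ii), fix $\gamma>0$ and $a_*>0$, and confine $a$ to a compact subinterval $[a_0,a_3]$ containing $a_*$ in its interior. For $\eta>a_3+1$ and any $a\in[a_0,a_3]$ we have $s^2-a^2\ge s^2-a_3^2$, so the same reasoning yields the uniform-in-$a$ tail bound $|f(\eta;a,\gamma)-d(a,\gamma)|\le\int_\eta^\infty 4\gamma/(s^2-a_3^2)\,{\rm d}s$, which vanishes as $\eta\to\infty$. Choosing $\eta^*>a_3+1$ so large that this tail is at most $\varepsilon/3$ and invoking Lemma \ref{lfc}(ii) for the pointwise continuity of $f(\eta^*;\cdot,\gamma)$ at $a_*$ completes the argument exactly as above.

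There is no serious obstacle: the tail estimate from Lemma \ref{ld0} is tailor-made to provide the required uniform control. The only subtlety is that the tail bound must be made uniform in the shooting parameter being varied, which is why we work on compact subintervals of $(0,\infty)$ and, in part (ii), restrict $\eta$ to exceed $a_3+1$ so that the denominator $s^2-a^2$ stays bounded away from zero independent of $a$.
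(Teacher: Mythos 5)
Your proof is correct, but it handles the key step --- passing from continuity of $f(\eta^*;a,\gamma)$ at a fixed $\eta^*$ (Lemma \ref{lfc}) to continuity of the limit $d(a,\gamma)$ --- by a different mechanism than the paper. The paper states the corollary as following ``directly'' from Lemma \ref{lfc} and, in the analogous detailed argument of Lemma \ref{lcag}, justifies the interchange of limits by a one-sided sandwich: since $f$ is decreasing in $\eta$ one has $f(\eta_0;a,\gamma)>d(a,\gamma)$, and combining this with the monotonicity of $d$ in $a$ and in $\gamma$ (Lemmas \ref{lda}(i), \ref{ldg}(i)) traps $d(a,\gamma)$ between $d(a_0,\gamma_0)$ and $f(\eta_0;a,\gamma)$, at the cost of a case analysis on the signs of $a-a_0$ and $\gamma-\gamma_0$. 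You instead extract a uniform tail estimate from Lemma \ref{ld0}, namely $|f(\eta;a,\gamma)-d(a,\gamma)|\le\int_\eta^\infty 4\gamma_3/(s^2-a^2)\,{\rm d}s$ for $\gamma\in[\gamma_0,\gamma_3]$ (and the analogous bound with $s^2-a_3^2$ for $a\in[a_0,a_3]$, $\eta>a_3+1$), which gives uniform convergence $f(\eta;\cdot)\to d(\cdot)$ on compact parameter sets and hence a clean $\varepsilon/3$ argument with no case splitting and no appeal to monotonicity of $d$ in the parameters. Your route is more quantitative and arguably more robust --- it would survive even if $d$ were not monotone in $a$ or $\gamma$ --- while the paper's sandwich is shorter once the monotonicity lemmas are in hand; both correctly rest on the same two ingredients, the decay of $f'$ from Lemma \ref{ld0} (or monotonicity of $f$ in $\eta$ from Lemma \ref{mono}) and the pointwise continuity of Lemma \ref{lfc}.
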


Now we prove that $d(a,\gamma)$ is a continuous function of both $a$ and $\gamma$ by using Lemma \ref{ldg} (ii) and Lemma \ref{lda} (ii).
\begin{lemma}
$d(a,\gamma)$ is a continuous function of $a$ and $\gamma$.
\label{lcag}
\end{lemma}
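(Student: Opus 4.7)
The plan is to mirror the triangle-inequality argument used for $b(a,\gamma)$ in Lemma \ref{lc1}. Fix a target point $(a_0,\gamma_0)$ and, for arbitrary $(a,\gamma)$ close to it, split
\[
|d(a,\gamma)-d(a_0,\gamma_0)|\le |d(a,\gamma)-d(a_0,\gamma)|+|d(a_0,\gamma)-d(a_0,\gamma_0)|.
\]
The second term is handled directly by Corollary \ref{lcd}(i): given $\delta>0$, there exists $\mu_1>0$ such that this term is less than $\delta/2$ whenever $|\gamma-\gamma_0|<\mu_1$.

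For the first term, I would apply Corollary \ref{lcd}(ii), continuity of $a\mapsto d(a,\gamma)$ for fixed $\gamma$, but only after verifying that the associated modulus of continuity can be chosen uniformly in $\gamma$ on the compact interval $[\gamma_0-\mu_1,\gamma_0+\mu_1]$. To do this I would re-inspect the Gronwall-based estimates from the proofs of Lemmas \ref{lkb2} and \ref{lfc}: on a right-neighbourhood $(a,a+\delta)$ of the free boundary the Lipschitz constants in $a$ depend on $\phi'(-f)$, on the auxiliary quantity $L(f;\gamma)$, and on lower bounds for $\frac{2\gamma}{a}-\theta$. By the monotonicity of $f(\eta;a,\gamma)$ in $a$ and $\gamma$ (Lemmas \ref{ldg}(i) and \ref{lda}(i)) and the bounds for $d$ coming from Lemmas \ref{ldg} and \ref{lda}, each of these quantities is uniformly bounded away from the degeneracies for $(a,\gamma)$ in a compact rectangle around $(a_0,\gamma_0)$. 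Thus the local Lipschitz estimate is uniform in $\gamma$, and the iterative extension of Lemma \ref{lfc}, which propagates continuity to any $\eta^*>a$, preserves this uniformity. Letting $\eta^*\to\infty$ then transfers the uniform estimate to $d(a,\gamma)$ itself.

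Combining, there exists $\mu_2>0$ with $|d(a,\gamma)-d(a_0,\gamma)|<\delta/2$ whenever $|a-a_0|<\mu_2$ and $\gamma\in[\gamma_0-\mu_1,\gamma_0+\mu_1]$; setting $\mu=\min\{\mu_1,\mu_2\}$ gives joint continuity at $(a_0,\gamma_0)$.

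The main obstacle is precisely the uniformity issue: separate continuity in each variable does not by itself yield joint continuity, so the crux is verifying that the estimates in Lemma \ref{lfc} can be made uniform in the parameter being held fixed. The iterative extension from a right-neighbourhood of $a$ to arbitrary $\eta^*$, together with the subtle passage to the limit $\eta^*\to\infty$ on an unbounded interval, is where care is needed; the bounds in Ledmma \ref{ldg}(ii) and Lemma \ref{lda}(ii) are used to ensure that nothing degenerates as $\eta\to\infty$.
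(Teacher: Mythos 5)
Your decomposition
\[
|d(a,\gamma)-d(a_0,\gamma_0)|\le |d(a,\gamma)-d(a_0,\gamma)|+|d(a_0,\gamma)-d(a_0,\gamma_0)|
\]
is not the route the paper takes. The paper's proof of Lemma \ref{lcag} never tries to make the modulus of continuity of $a\mapsto d(a,\gamma)$ uniform in $\gamma$; instead it splits into three sign cases ($a>a_0,\gamma<\gamma_0$; $a<a_0$; $\gamma>\gamma_0$) and in each case uses a monotone sandwich: choose a finite $\eta_0$ with $f(\eta_0;a_0,\gamma_0)$ within $\delta/2$ of $d(a_0,\gamma_0)$, use continuity of $f(\eta_0;\cdot,\cdot)$ at that single finite $\eta_0$ (Lemma \ref{lfc}), and then exploit the monotonicity of $d$ in $a$ and $\gamma$ (Lemmas \ref{ldg}(i), \ref{lda}(i)) together with $f(\eta_0;a,\gamma)>d(a,\gamma)$ (monotonicity of $f$ in $\eta$) to trap $d(a,\gamma)$ between $d(a_0,\gamma_0)$ and $d(a_0,\gamma_0)+\delta$. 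This one-sided argument is precisely what lets the paper avoid the uniformity question you are wrestling with.

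You correctly identify that uniformity is the crux of your version, but the step where you claim to resolve it contains a genuine gap. Even granting that the Gronwall/fixed-point estimates near the free boundary give a Lipschitz constant in $a$ that is uniform for $\gamma$ in a compact interval, the extension to general $\eta^*>a$ in Lemma \ref{lfc} (Cases A and B) is not a Gronwall iteration: it is itself an $\varepsilon$--$\delta$ argument built on monotonicity, and each iteration step can degrade the modulus, so ``the iterative extension preserves this uniformity'' is an assertion, not a proof. More seriously, the final step ``letting $\eta^*\to\infty$ transfers the uniform estimate to $d(a,\gamma)$'' requires either a modulus of continuity uniform in $\eta^*$ over the whole unbounded range, or uniform-in-$(a,\gamma)$ convergence of $f(\eta^*;a,\gamma)$ to $d(a,\gamma)$; you establish neither, and the results you cite for this purpose, Lemmas \ref{ldg}(ii) and \ref{lda}(ii), concern the behaviour of $d$ as $\gamma\to\infty$ and $a\to\infty$, not the rate at which $f(\eta)\to d$ as $\eta\to\infty$. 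The missing ingredient is the decay estimate (\ref{fd}) of Lemma \ref{ld0}, $-f'(\eta)\le 4\gamma/(\eta^2-a^2)$, which integrates to a tail bound $|f(\eta^*;a,\gamma)-d(a,\gamma)|\le C(\eta^*)$ with $C(\eta^*)\to0$ uniformly for $(a,\gamma)$ in a compact rectangle; with that in hand you only ever need continuity of $f(\eta^*;\cdot,\cdot)$ at one fixed finite $\eta^*$, and the argument closes. As written, however, the passage to the limit is unjustified.
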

\begin{proof}If $d(a,\gamma)$ is continuous with $a$ and $\gamma$, then for all $\delta>0$ there exists $\mu>0$ such that if $|(a,\gamma)-(a_0,\gamma_0)|<\mu$ then $|d(a,\gamma)-d(a_0,\gamma_0)|<\delta$.

\noindent{\bf Case 1.} For $a>a_0$ and $\gamma<\gamma_0$, we choose a fixed $\eta_0$ such that $|f(\eta_0;a_0,\gamma_0)-d(a_0,\gamma_0)|<\frac{\delta}{2}$. We know from Lemma \ref{lfc} that there exists $\mu$ such that $|f(\eta_0;a,\gamma)-f(\eta_0;a,\gamma_0)|<\frac{\delta}{2}$ for $|(a,\gamma)-(a,\gamma_0)|<\frac{\mu}{2}$. Then
\begin{align*}
|f(\eta_0;a,\gamma)- d(a_0,\gamma_0)| \le|f(\eta_0;a,\gamma)-f(\eta_0;a,\gamma_0)|+|f(\eta_0;a,\gamma_0)- d(a_0,\gamma_0)|<\frac{\delta}{2}+\frac{\delta}{2}=\delta.
\end{align*}
Since the sequence $a>a_0$ and $\gamma<\gamma_0$, then we have
\begin{align*}
 d(a_0,\gamma_0)+\delta>f(\eta_0;a,\gamma)>d(a,\gamma)> d(a_0,\gamma_0),
\end{align*}
then $ d(a,\gamma)- d(a_0,\gamma_0)<\delta$ as $|(a,\gamma)-(a_0,\gamma_0)|<\mu$.

\noindent{\bf Case 2.} For $a<a_0$, it follow by using the similar argument as in Lemma \ref{lfc} Case B, but considering
\begin{align*}
|d(a,\gamma)-d(a_0,\gamma_0)|\le|d(a,\gamma)-d(a_0,\gamma)|+|d(a_0,\gamma)-d(a_0,\gamma_0)|.
\end{align*}
\noindent{\bf Case 3.} For $\gamma_0<\gamma$, the result follows from the similar approach as in Lemma \ref{lfc} Case B, but considering
\begin{align*}
|d(a,\gamma)-d(a_0,\gamma_0)|\le|d(a,\gamma)-d(a,\gamma_0)|+|d(a,\gamma_0)-d(a_0,\gamma_0)|.
\end{align*}
\end{proof}

\subsection{Two-parameter shooting method}
In this section we will use two-parameter shooting to show that for each $U_0,V_0>0$, there exist $a,\gamma>0$ such that the solution $f(\eta;a,\gamma)$ of (\ref{j}) satisfies $b(a,\gamma)=U_0$ and $d(a,\gamma)=-V_0$.\\

We will use the following lemma which can be found in \cite[Lemma 2.8]{clas}.
\begin{lemma}
Suppose that $\Lambda_1$ and $\Lambda_2$ are two connected open sets of $\mathbb{R}^2$, with components (maximal connected subset) $\tilde\Lambda_1\subset\Lambda_1$ and $\tilde\Lambda_2\subset\Lambda_2$ such that $\tilde\Lambda_1\cap\tilde\Lambda_2$ is disconnected. Then $\Lambda_1\cup\Lambda_2\neq\mathbb{R}^2$.
\label{top}
\end{lemma}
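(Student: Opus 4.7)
The plan is to prove the contrapositive: if $\Lambda_1\cup\Lambda_2=\mathbb{R}^2$ then $\tilde\Lambda_1\cap\tilde\Lambda_2$ cannot be disconnected. Under the natural reading that $\Lambda_i$ is itself connected, so that its unique maximal connected subset is $\tilde\Lambda_i=\Lambda_i$, this reduces to showing that two connected open sets whose union is $\mathbb{R}^2$ must have connected intersection.

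The first step is to dispose of the trivial case $\Lambda_1\cap\Lambda_2=\varnothing$: then $\Lambda_1\cup\Lambda_2$ would be a disjoint union of two nonempty open sets, which is disconnected, contradicting that $\mathbb{R}^2$ is connected. So we may assume $\Lambda_1\cap\Lambda_2\neq\varnothing$.

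The main step is to apply the Mayer--Vietoris sequence in reduced singular homology to the open cover $\{\Lambda_1,\Lambda_2\}$ of $\mathbb{R}^2$:
\begin{equation*}
H_1(\mathbb{R}^2)\longrightarrow\tilde H_0(\Lambda_1\cap\Lambda_2)\longrightarrow\tilde H_0(\Lambda_1)\oplus\tilde H_0(\Lambda_2).
\end{equation*}
Since $\mathbb{R}^2$ is contractible, $H_1(\mathbb{R}^2)=0$; since each $\Lambda_i$ is connected, $\tilde H_0(\Lambda_i)=0$. Exactness then forces $\tilde H_0(\Lambda_1\cap\Lambda_2)=0$, i.e.\ $\Lambda_1\cap\Lambda_2$ is connected, which is the desired contradiction.

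The main obstacle, if one prefers an argument that avoids algebraic topology, is to reproduce this conclusion directly from simple connectedness of $\mathbb{R}^2$. The alternative route I would pursue: pick $p,q$ in distinct components of $\Lambda_1\cap\Lambda_2$, form paths $\alpha\subset\Lambda_1$ and $\beta\subset\Lambda_2$ joining them, contract the loop $\alpha\cdot\beta^{-1}$ in $\mathbb{R}^2$, and then use a Lebesgue-number subdivision of the domain of the contracting homotopy, together with the open cover $\{\Lambda_1,\Lambda_2\}$, to extract a path in $\Lambda_1\cap\Lambda_2$ from $p$ to $q$, contradicting their lying in different components. This Seifert--van Kampen style argument is the technically delicate step; the Mayer--Vietoris proof above is cleaner and would be my preferred presentation.
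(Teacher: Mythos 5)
Your argument is correct, but it is worth noting that the paper does not prove this lemma at all: it is quoted verbatim from Hastings and McLeod \cite[Lemma 2.8]{clas} and used as a black box, so there is no ``paper proof'' to match. What you have supplied is a clean, self-contained justification. The reduction $\tilde\Lambda_i=\Lambda_i$ is indeed forced by the stated hypothesis that each $\Lambda_i$ is connected, the empty-intersection case is correctly disposed of via connectedness of $\mathbb{R}^2$ (which also covers the convention issue of whether $\varnothing$ counts as disconnected), and the Mayer--Vietoris segment $H_1(\mathbb{R}^2)\to\tilde H_0(\Lambda_1\cap\Lambda_2)\to\tilde H_0(\Lambda_1)\oplus\tilde H_0(\Lambda_2)$ does exactly what you claim. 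The only pedantic point: $\tilde H_0=0$ gives path-connectedness, which you should convert to connectedness by observing that $\Lambda_1\cap\Lambda_2$ is open in $\mathbb{R}^2$, hence locally path-connected, so its components and path components agree. A bonus of the homological route is that it immediately yields the paper's follow-up remark that the lemma holds on any set homeomorphic to the plane (the authors apply it to $(0,\infty)\times(0,\infty)$ and to $\mathbb{R}\times(0,\infty)$), since only the vanishing of $H_1$ and contractibility-independent exactness are used. Your sketched elementary alternative (contracting the loop $\alpha\cdot\beta^{-1}$ and extracting a path in the intersection via a Lebesgue-number subdivision) is the standard van Kampen--style proof of the same fact and would also work, but as presented it is only an outline; the Mayer--Vietoris version is the one I would keep.
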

This result also applies to every subset of $\mathbb{R}^2$ which is homeomorphic to the entire plane \cite[p.31]{clas}. We can apply it to the set $(0,\infty)\times(0,\infty)$, for example, if we define a homomorphism $g:(0,\infty)\times(0,\infty)\rightarrow\mathbb{R}^2$ such that $g(x,y)=(\log x,\log y)$.

\begin{theorem}\label{twe1}
Suppose $\varepsilon>0$, then there exists a unique solution $f$ of problem (\ref{j}).
\end{theorem}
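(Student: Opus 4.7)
The plan is to apply a two-parameter shooting argument, reducing existence to a one-dimensional intermediate value argument (or equivalently to Lemma \ref{top}) by combining the continuity and monotonicity properties of $b(a,\gamma)$ and $d(a,\gamma)$ accumulated in Lemmas \ref{lbg}--\ref{lcag}. For each $\gamma>0$, the map $a\mapsto b(a,\gamma)$ is continuous (Lemma \ref{lc1}) and strictly increasing from $0$ (Lemma \ref{lba}(ii)) to $\infty$ (Lemma \ref{lba}(iv)), so there is a unique $a_1(\gamma)>0$ with $b(a_1(\gamma),\gamma)=U_0$. Strict monotonicity of $b$ in both variables (Lemmas \ref{lbg}(i), \ref{lba}(i)) together with joint continuity yields that $\gamma\mapsto a_1(\gamma)$ is continuous and strictly decreasing on $(0,\infty)$.

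I then set $\chi(\gamma):=d(a_1(\gamma),\gamma)$, continuous by Lemma \ref{lcag}, and verify two one-sided limits. First, $\chi(\gamma)\to-\infty$ as $\gamma\to\infty$: fix any $A>0$; by Lemma \ref{lbg}(iii), $b(A,\gamma)\to\infty$, so $a_1(\gamma)<A$ eventually by monotonicity of $b$ in $a$, and hence $\chi(\gamma)\le d(A,\gamma)\to-\infty$ using Lemmas \ref{lda}(i) and \ref{ldg}(ii). Second, $\chi(\gamma)\to 0$ as $\gamma\to 0^+$: the bound $d(a,\gamma)\ge -2\gamma/a$ extracted in the proof of Lemma \ref{ldg}(iii), together with the fact that $a_1$ is decreasing (so $a_1(\gamma)\ge a_1(\gamma_0)>0$ for any fixed $\gamma_0>0$ and all $\gamma\le\gamma_0$), gives $\chi(\gamma)\ge -2\gamma/a_1(\gamma_0)\to 0$, and combined with $\chi(\gamma)<0$ this forces $\chi(\gamma)\to 0^-$. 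The intermediate value theorem applied to $\chi$ then produces $\gamma^*\in(0,\infty)$ with $\chi(\gamma^*)=-V_0$, and the pair $(a_1(\gamma^*),\gamma^*)$ solves (\ref{j}).

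Uniqueness follows directly from strict monotonicity: if $(\bar a_1,\bar\gamma_1)$ and $(\bar a_2,\bar\gamma_2)$ are two solutions with $\bar a_1\le\bar a_2$, then $b(\bar a_1,\bar\gamma_1)=b(\bar a_2,\bar\gamma_2)=U_0$ combined with strict monotonicity of $b$ in $a$ forces $\bar\gamma_1\ge\bar\gamma_2$, and then strict monotonicity of $d$ in $a$ (increasing) and in $\gamma$ (decreasing) yields $d(\bar a_1,\bar\gamma_1)<d(\bar a_2,\bar\gamma_2)$ unless the two pairs coincide, contradicting $d=-V_0$ at both.

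The main obstacle is the limit $\chi(\gamma)\to 0$ as $\gamma\to 0^+$: we do not have direct control on $\lim_{\gamma\to 0^+}b(a,\gamma)$ and so cannot immediately conclude $a_1(\gamma)\to\infty$, but the lower bound $d\ge-2\gamma/a$ together with the monotonicity of $a_1$ sidesteps this entirely. An alternative formulation closer to the language of the excerpt is to take $\Lambda_1=\{b\ne U_0\}$ and $\Lambda_2=\{d\ne-V_0\}$, identify their two connected components each (superlevel and sublevel sets of monotone functions, separated by continuous level curves), and invoke Lemma \ref{top} after the homeomorphism $(a,\gamma)\mapsto(\log a,\log\gamma)$; the asymptotic information above is precisely what ensures a suitable pair of components has disconnected intersection, yielding a contradiction with $\Lambda_1\cup\Lambda_2=(0,\infty)^2$.
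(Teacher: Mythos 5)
Your argument is correct, but it takes a genuinely different route from the paper. The paper runs a planar topological shooting: it defines four ``bad'' sets $\Gamma_1,\dots,\Gamma_4$ in the $(a,\gamma)$-quadrant, combines them into $\Lambda_1=\Gamma_1\cup\Gamma_4$ and $\Lambda_2=\Gamma_2\cup\Gamma_3$, proves each $\Lambda_i$ is open, non-empty and connected (via step-paths built from the monotonicity of $b$ and $d$), shows $\Lambda_1\cap\Lambda_2$ is disconnected because $\Gamma_1\cap\Gamma_3$ and $\Gamma_2\cap\Gamma_4$ are non-empty and disjoint, and then invokes Lemma \ref{top} to produce a point outside $\Lambda_1\cup\Lambda_2$; uniqueness is then delegated to the uniqueness of the weak solution via Theorem \ref{tj}. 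You instead collapse the two-parameter problem to one dimension: strict monotonicity and the limits in Lemma \ref{lba} give a well-defined, continuous, strictly decreasing level curve $a_1(\gamma)$ solving $b=U_0$, and a single application of the intermediate value theorem to $\chi(\gamma)=d(a_1(\gamma),\gamma)$ finishes existence. The two limits you need for $\chi$ are correctly sourced: the bound $d(a,\gamma)\ge-2\gamma/a$ is exactly what the paper's proof of Lemma \ref{ldg}(iii) establishes, and your use of the monotone decrease of $a_1$ to keep $a_1(\gamma)$ bounded away from $0$ as $\gamma\to0^+$ neatly avoids needing any control of $b(a,\gamma)$ as $\gamma\to0$. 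What your approach buys is a more elementary argument (no appeal to the Hastings--McLeod connectedness lemma) and, importantly, a direct proof that the parameter pair $(a,\gamma)$ is \emph{unique}, which the paper's topological argument does not give by itself. What the paper's approach buys is robustness: it would survive if $b$ were only monotone non-strictly in $a$, or if the level set $\{b=U_0\}$ failed to be a graph, since it never needs to solve for $a$ as a function of $\gamma$. Both arguments rest on the same ingredients (Lemmas \ref{lbg}, \ref{lba}, \ref{ldg}, \ref{lda}, \ref{lc1}, \ref{lcag}), so your proof is a legitimate alternative rather than a gap-filling variant.
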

\begin{proof}First we identify four ``bad" sets
\begin{align*}
&\Gamma_1=\left\{(a,\gamma)\,\big|\,b(a,\gamma)>U_0\right\},\\
&\Gamma_2=\left\{(a,\gamma)\,\big|\,b(a,\gamma)<U_0\right\},\\
&\Gamma_3=\left\{(a,\gamma)\,\big|\,d(a,\gamma)>-V_0\right\},\\
&\Gamma_4=\left\{(a,\gamma)\,\big|\,d(a,\gamma)<-V_0\right\}.
\end{align*}
We combine the $\Gamma_i$ to form two new sets, as follows:
\begin{align*}
\Lambda_1=\Gamma_1\cup\Gamma_4,\\
\Lambda_2=\Gamma_2\cup\Gamma_3.
\end{align*}
It is easy to see that if $(a,\gamma)$ is in $(0,\infty)\times(0,\infty)$ but not in $\Lambda_1\cup\Lambda_2$, then we have a solution $f(\eta;a,\gamma)$ such that $b(a,\gamma)=U_0$ and $d(a,\gamma)=-V_0$. Now we want to show that $\Lambda_1$ and $\Lambda_2$ satisfy the hypothesis of Lemma \ref{top}.

These sets are clearly open in $(0,\infty)\times(0,\infty)$ since $b(a,\gamma)$ and $d(a,\gamma)$ are continuous functions of $a$ and $\gamma$ by Lemma \ref{lc1} and \ref{lcag}. $\Gamma_1$ and $\Gamma_2$ are non-empty since $\displaystyle\lim_{a\rightarrow 0}b(a,\gamma)=0$ and $\displaystyle\lim_{a\rightarrow \infty}b(a,\gamma)=\infty$ by Lemma \ref{lba}. Moreover, $\displaystyle\lim_{a\rightarrow \infty}d(a,\gamma)=0$ and $\displaystyle\lim_{\gamma\rightarrow\infty}d(a,\gamma)=-\infty$ by Lemma \ref{ldg}, yielding $\Gamma_3$ and $\Gamma_4$ are non-empty. Therefore, $\Lambda_1$ and $\Lambda_2$ are open and non-empty.

\begin{lemma}
The sets $\Lambda_1$ and $\Lambda_2$ are connected.\label{lcon}
\end{lemma}
\begin{proof}In the following, we will exploit the monotonicity of $b(a,\gamma)$ and $d(a,\gamma)$ in $a$ and $\gamma$. First we prove that $\Gamma_1$, $\Gamma_2$, $\Gamma_3$ and $\Gamma_4$ are each connected. As an example, we prove that $\Gamma_1$ is connected. Given two points $(\tilde a,\tilde\gamma),(\hat a,\hat\gamma)\in\Lambda_1$, there are two cases:
\begin{itemize}
\item[(i).] $\tilde a>\hat a$ and $\tilde\gamma\ge\hat\gamma$;
\item[(ii).] $\tilde a\ge\hat a$ and $\tilde\gamma<\hat\gamma$.
\end{itemize}
The following figures describe an admissible step path, contained in $\Gamma_1$, that connects $(\tilde a,\tilde\gamma)$ and $(\hat a,\hat\gamma)$ in each of two cases
\begin{figure}[H]
\centering
\begin{minipage}{0.49\linewidth}
\flushleft
\includegraphics[width=1.1\textwidth]{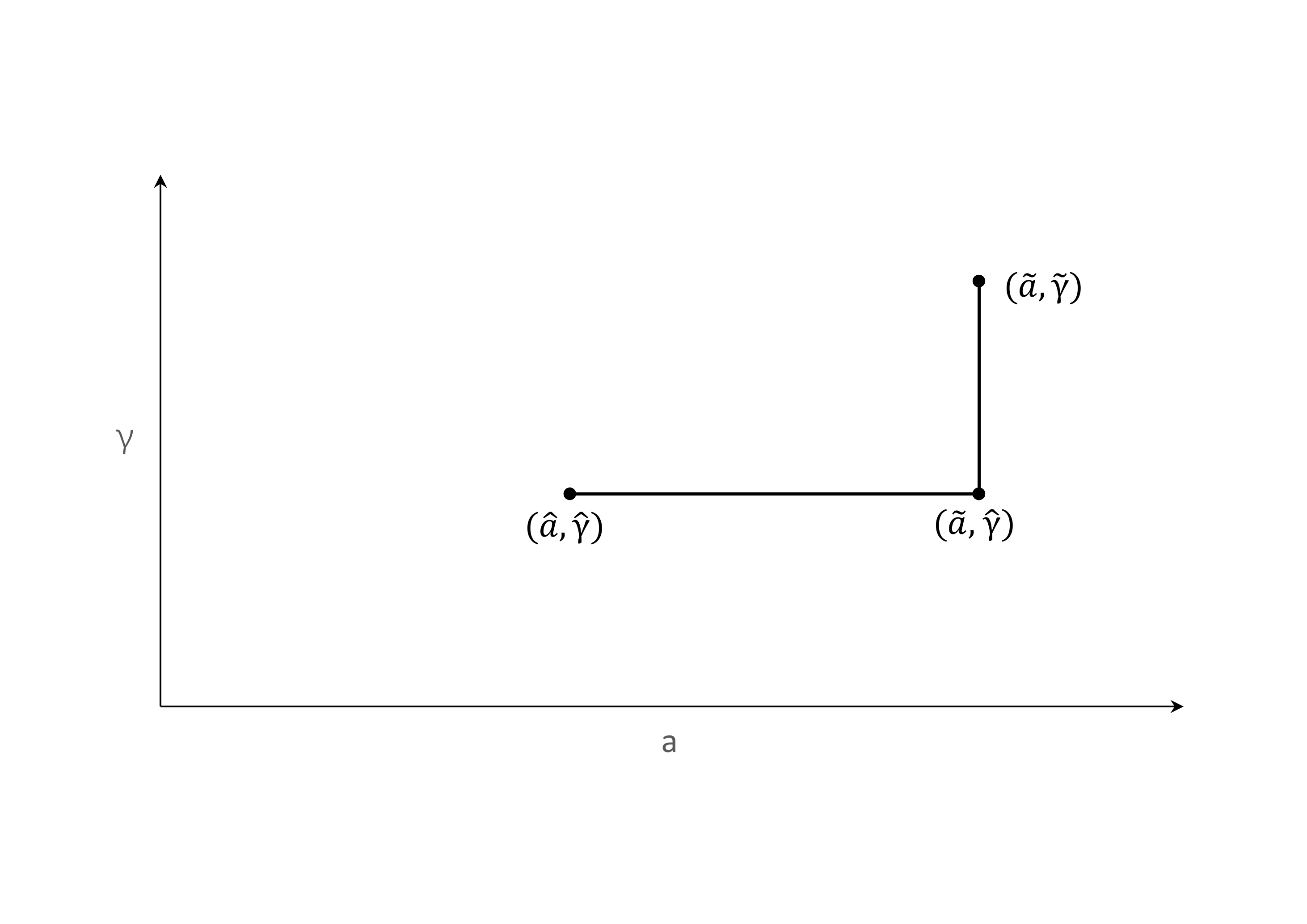}
\captionsetup{font={small}}\caption{step-path of $\Gamma_1$(i)}\label{figure1}
\end{minipage}
\begin{minipage}{0.49\linewidth}
\flushright
\includegraphics[width=1.1\textwidth]{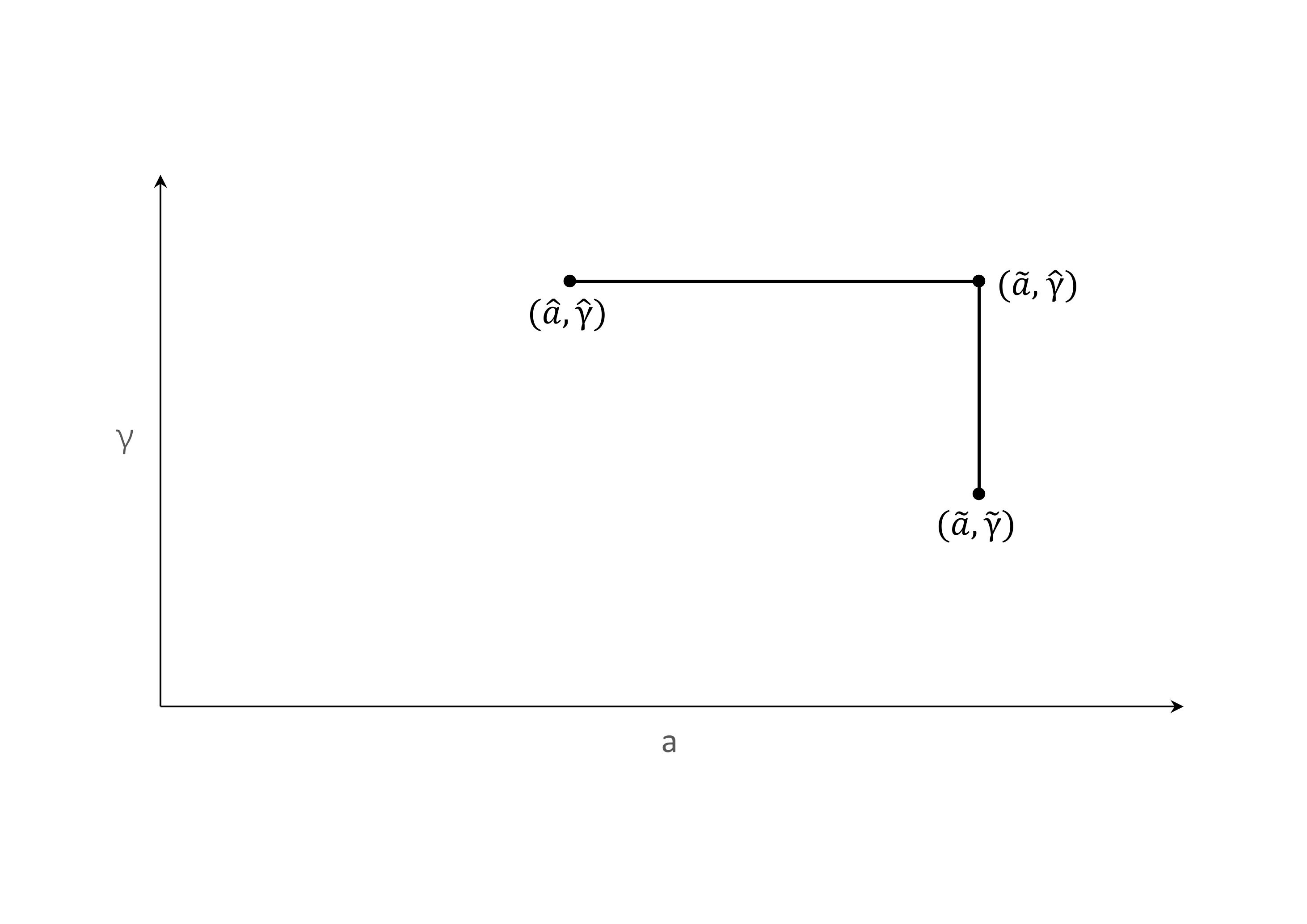}
\captionsetup{font={small}}\caption{step-path of $\Gamma_1$(ii)}\label{figure2}
\end{minipage}
\end{figure}
\noindent In Figure \ref{figure1}, if $\tilde a>\hat a$, then we have $(\tilde a,\hat\gamma)\in\Gamma_1$, since $b(\tilde a,\hat\gamma)>b(\hat a,\hat\gamma)>U_0$ by Lemma \ref{lba} (i). It follows that the path connecting $(\tilde a,\hat\gamma)$ and $(\tilde a,\tilde\gamma)$ belongs to $\Gamma_1$, since $(\tilde a,\hat\gamma),(\tilde a,\tilde\gamma)\in\Gamma_1$ and $b(a,\gamma)$ is monotonically increasing in $\gamma$ by Lemma \ref{lbg} (i). Similarly, in Figure \ref{figure2}, $(\tilde a,\hat\gamma)\in\Gamma_1$ as $\tilde a\ge\tilde a$, since $b(a,\gamma)$ is increasing in $a$ by Lemma \ref{lba} (i), then the path connecting $(\tilde a,\hat\gamma)$ and $(\tilde a,\tilde\gamma)$ belongs to $\Gamma_1$ by Lemma \ref{lba} (i). We can prove by using a similar argument that $\Gamma_2$, $\Gamma_3$ and $\Gamma_4$ are each connected.

We now prove $\Gamma_1\cap\Gamma_4$ and $\Gamma_2\cap\Gamma_3$ are non-empty.

For fixed $a>0$, since $\displaystyle\lim_{\gamma\rightarrow\infty}b(a,\gamma)=\infty$ by Lemma \ref{lbg} (iii) and $\displaystyle\lim_{\gamma\rightarrow\infty}d(a,\gamma)=-\infty$ by Lemma \ref{ldg} (ii) , we can find $\check\gamma$ large enough such that $b(a,\check\gamma)>U_0$ and $d(a,\check\gamma)<-V_0$. It follows that for $\check\gamma$ sufficiently large, $(a,\check\gamma)\in\Gamma_1\cap\Gamma_4$, so $\Gamma_1\cap\Gamma_4\ne\emptyset$. Similarly, given $\widetilde\gamma>0$, there exists $\widehat a$ small enough that $b(\widehat a,\widetilde\gamma)<U_0$ since $\displaystyle\lim_{a\rightarrow0}b(a,\gamma)=0$ by Lemma \ref{lba} (ii). Then choose $\widehat\gamma$ smaller than $\widetilde\gamma$ if necessary to ensure that $d(\widehat a,\widehat\gamma)>-V_0$ and $b(\widehat a,\widehat\gamma)<U_0$ since $\displaystyle\lim_{\gamma\rightarrow0}d(a,\gamma)=0$ by Lemma \ref{ldg} (iii) and $b(a,\gamma)$ is monotonically increasing in $\gamma$ by Lemma \ref{lbg} (i). It then follows that $(\widehat a,\widehat\gamma)\in\Gamma_2\cap\Gamma_3$, so $\Gamma_2\cap\Gamma_3\ne\emptyset$.

Then, since $\Gamma_1\cap\Gamma_4\ne\emptyset$, we can always find a point belonging to $\Gamma_1\cap\Gamma_4$ that is path connected to both $(\hat a,\hat\gamma)\in\Gamma_1$ and $(a^*,\gamma^*)\in\Gamma_4$, since $\Gamma_1$ and $\Gamma_4$ are each connected.

\begin{figure}[H]
\centering
\includegraphics[width=0.6\textwidth]{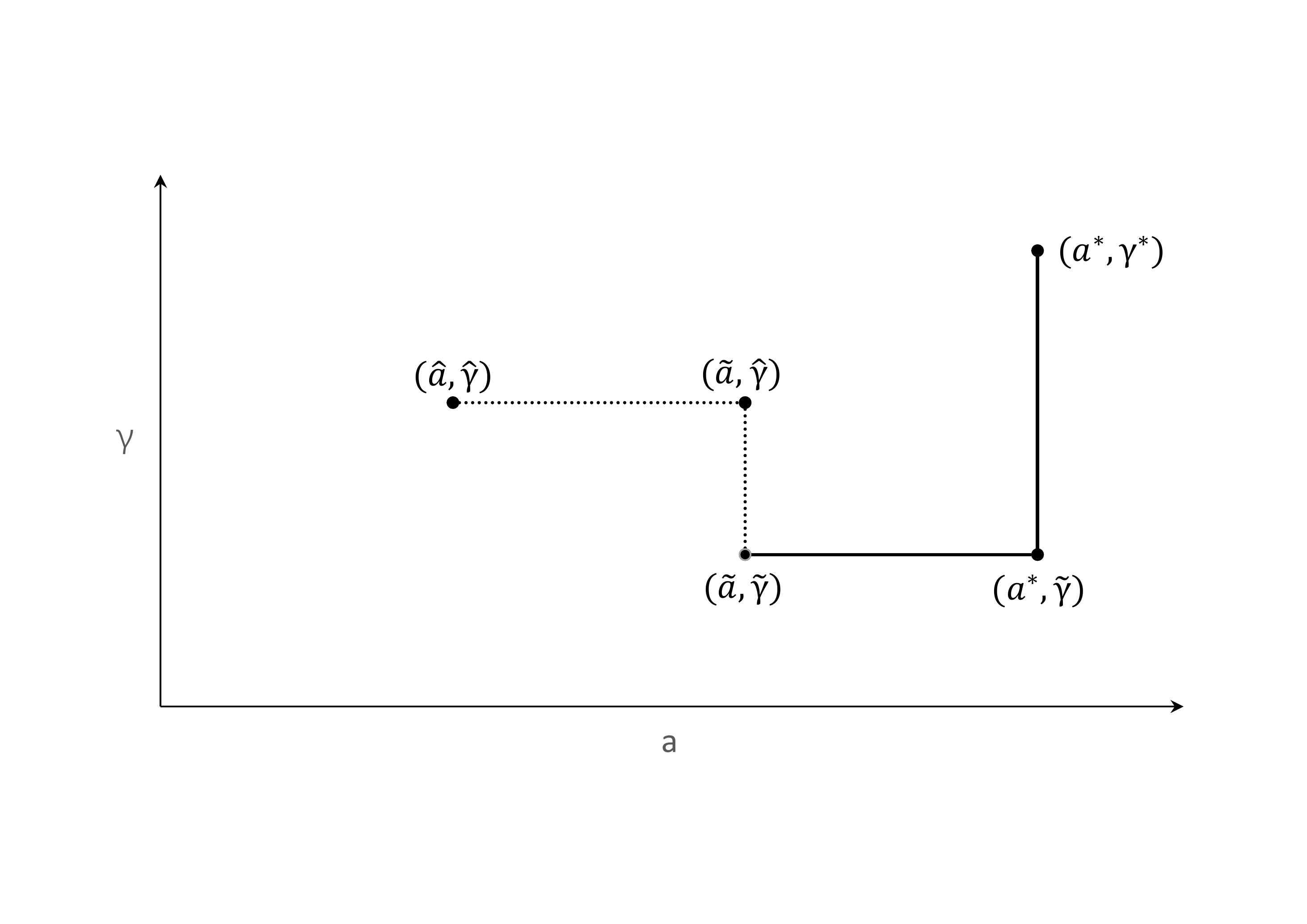}
\captionsetup{font={small}}\caption{path-connectedness of $\Lambda_1$}\label{figure3}
\end{figure}

\noindent For example, in Figure \ref{figure3}, the solid lines indicate that the path belongs to $\Gamma_1$ and the dashed lines indicate that the path belongs to $\Gamma_4$. We can find $(\tilde a,\tilde\gamma)\in \Gamma_1\cap\Gamma_4$ since $\Gamma_1\cap\Gamma_4\ne\emptyset$. If $(\hat a,\hat\gamma)\in\Gamma_4$ and $(a^*,\gamma^*)\in\Gamma_1$, then there are step paths each connecting $(\hat a,\hat\gamma)$ and $(\tilde a,\tilde\gamma)$, $(\tilde a,\tilde\gamma)$ and $(a^*,\gamma^*)$, since $\Gamma_1$ and $\Gamma_4$ are each connected.

Therefore, $\Lambda_1$ is connected, and similarly, $\Lambda_2$ is connected since $\Gamma_2\cap\Gamma_3\ne\emptyset$ and $\Gamma_2,\Gamma_3$ are each connected.\end{proof}

Now we take $\tilde\Lambda_1=\Lambda_1$, $\tilde\Lambda_2=\Lambda_2$.

Next we will show that $\Lambda_1\cap\Lambda_2$ is disconnected. We have
\begin{align*}
\Lambda_1\cap\Lambda_2=(\Gamma_1\cap\Gamma_2)\cup(\Gamma_1\cap\Gamma_3)\cup(\Gamma_2\cap\Gamma_4)\cup(\Gamma_3\cap\Gamma_4).
\end{align*}
Clearly $\Gamma_1\cap\Gamma_2$, $\Gamma_3\cap\Gamma_4$ are empty.

\begin{lemma}
$\Lambda_1\cap\Lambda_2$ is disconnected.\label{lds}
\end{lemma}
\begin{proof}For fixed $\gamma$, we can find $\tilde a$ large enough such that $b(\tilde a,\gamma)>U_0$ and $d(\tilde a,\gamma)>-V_0$, since $\displaystyle\lim_{a\rightarrow\infty}b(a,\gamma)=\infty$ by Lemma \ref{lba} (iv) and $\displaystyle\lim_{a\rightarrow\infty}d(a,\gamma)=0$ by Lemma \ref{lda} (iii). It follows that for $\tilde a$ sufficient large, $(\tilde a,\gamma)\in\Gamma_1\cap\Gamma_3$, so $\Gamma_1\cap\Gamma_3\ne\emptyset$. Similarly, given $\hat a>0$, there exits $\gamma^*$ large enough such that $d(\hat a,\gamma^*)<-V_0$, since $\displaystyle\lim_{\gamma\rightarrow\infty}d(a,\gamma)=-\infty$ by Lemma \ref{ldg} (ii). Then choose $a^*$ smaller than $\hat a$ if necessary to ensure that $d(a^*,\gamma^*)<-V_0$ and $b(a^*,\gamma^*)<U_0$, since $\displaystyle\lim_{a\rightarrow0}b(a,\gamma)=0$ by Lemma \ref{lba} (ii) and $d(a,\gamma)$ is monotonically decreasing in $a$ by Lemma \ref{lda} (i). It then follows that $(a^*,\gamma^*)\in\Gamma_2\cap\Gamma_4$, so $\Gamma_2\cap\Gamma_4\ne\emptyset$.

 Therefore $\Gamma_1\cap\Gamma_3$ and $\Gamma_2\cap\Gamma_4$ are non-empty and disjoint. Therefore, $\Lambda_1\cap\Lambda_2$ is disconnected since it is the union of non-empty, disjoint and open sets. \end{proof}

Now Lemma \ref{top} yields that there is a point $(\bar a,\bar\gamma)\in(0,\infty)\times(0,\infty)$ which is not in $\Lambda_1\cup\Lambda_2$. Hence $b(\bar a,\bar\gamma)=U_0$ since $(\bar a,\bar\gamma)\notin\Gamma_1\cup\Gamma_2$ and $d(\bar a,\bar\gamma)=-V_0$ since $(\bar a,\bar\gamma)\notin\Gamma_3\cup\Gamma_4$. The result then follows from Theorem \ref{tj}. \end{proof}

\section{Half-line case: self-similar solutions with $\varepsilon=0$}
Now we consider
\begin{align}
-\frac{1}{2}\eta f'(\eta)=[\phi'(f(\eta))f'(\eta)]',\quad  \eta<a,
\label{l}
\end{align}
with boundary conditions
\begin{align}
  &f(0)=U_0,\\
  &\lim_{\eta\nearrow a}f(\eta)=0,\quad \lim_{\eta\nearrow a}\phi'(f(\eta))f'(\eta)=-\frac{aV_0}{2}.\label{kb6}
\end{align}
In Lemma \ref{lkb1}, we showed that for each $a>0,\gamma>0$, there exists solution $f$ for $\eta\in(a-\delta,a)$ for some $\delta>0$. For $\varepsilon=0$, we know that $f(\eta)=-V_0$ for $\eta>a$. Then with the special choice $\gamma=\frac{aV_0}{2}$, we obtain directly from Lemma \ref{lkb1} and Lemma \ref{lbw} the following proposition.
\begin{proposition}
For given $a$ and $\gamma$, there exists $\delta>0$ such that for $\eta\in(a-\delta,a)$, equation (\ref{l}) has a unique solution which is positive and satisfies the boundary condition (\ref{kb6}). This solution can be continuous back to $\eta=0$.
\label{pkb1}
\end{proposition}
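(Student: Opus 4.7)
The statement is essentially a direct specialisation of the results already obtained in Section 3 for the case $\varepsilon>0$, so the plan is simply to verify that the hypotheses of those lemmas apply and then invoke them.

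The key observation is that equation (\ref{l}) is literally the same ODE as (\ref{k}), and the boundary conditions (\ref{kb6}) are the boundary conditions (\ref{kb1}) with the particular value
\begin{equation*}
\gamma=\frac{aV_0}{2}.
\end{equation*}
Since $a>0$ and $V_0>0$, this choice of $\gamma$ is strictly positive, so it falls within the range of parameters for which the earlier local existence and continuation results were established. Thus the plan is first to point out this identification, and then to invoke Lemma \ref{lkb1} to obtain existence and uniqueness of a positive solution on some left-neighbourhood $(a-\delta,a)$ satisfying (\ref{kb6}).

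For the global continuation back to $\eta=0$, I would then apply Lemma \ref{lbw}, whose hypothesis is simply that one has the local solution provided by Lemma \ref{lkb1} with given $a,\gamma>0$; no additional structure on $\gamma$ is used in that argument, so the same conclusion holds with $\gamma=aV_0/2$. Note that Lemma \ref{lbw} relies on condition (\ref{phi1}), which is in force throughout the paper, so nothing further needs to be checked.

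There is no real obstacle here; the only thing worth flagging is that the free-boundary condition in (\ref{kb6}) prescribes $\gamma$ as a function of $a$ rather than as an independent parameter, so the content of the proposition is that for the specific $\gamma=aV_0/2$ one still has the same local and global existence as in the two-parameter $\varepsilon>0$ setting. This is automatic from the proofs of Lemmas \ref{lkb1} and \ref{lbw}, which treat $\gamma$ as an arbitrary positive constant.
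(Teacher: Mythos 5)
Your proposal is correct and matches the paper's own argument exactly: the paper also obtains Proposition \ref{pkb1} by observing that (\ref{l}) with (\ref{kb6}) is just (\ref{k}) with (\ref{kb1}) for the particular positive value $\gamma=aV_0/2$, and then citing Lemma \ref{lkb1} for local existence and uniqueness and Lemma \ref{lbw} for continuation back to $\eta=0$. Nothing further is needed.
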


The following discussion on the behaviour of $f(\eta)$ as $\eta\rightarrow 0$ is analogous to that in Lemma \ref{lba}. Note that $\gamma=\frac{aV_0}{2}$ when $\varepsilon=0$.
\begin{lemma}
$b(a):=\displaystyle\lim_{\eta\rightarrow0}f\left(\eta;a,\displaystyle\frac{aV_0}{2}\right)$ has the following properties:
\begin{itemize}
\item[{\rm (i)}] $b(a)$ is strictly monotonically increasing in $a$$;$
\item[{\rm (ii)}] $\displaystyle\lim_{a\rightarrow0} b(a)=0$$;$
\item[{\rm (iii)}] $b(a)$ is a continuous function of $a$$;$
\item[{\rm (iv)}] $\displaystyle\lim_{a\rightarrow\infty} b(a)=\infty$$;$
\end{itemize}
\label{lb0}
\end{lemma}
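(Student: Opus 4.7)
The plan is to reduce each part of Lemma \ref{lb0} to the corresponding two-variable result for $b(a,\gamma)$ proved in Section 3.2, using the fact that when $\varepsilon=0$ the free-boundary condition (\ref{kb6}) forces $\gamma=aV_0/2$, so that $b(a)=b(a,aV_0/2)$.

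For part (i), I would sandwich. Given $0<a_1<a_2$, monotonicity of $b(a,\gamma)$ in $\gamma$ (Lemma \ref{lbg}(i)) and in $a$ (Lemma \ref{lba}(i)) gives
\begin{align*}
b(a_1,\tfrac{a_1V_0}{2})<b(a_2,\tfrac{a_1V_0}{2})<b(a_2,\tfrac{a_2V_0}{2}),
\end{align*}
so $b$ is strictly increasing. Part (iii) is immediate from Lemma \ref{lc1}, which gives joint continuity of $b(a,\gamma)$ in both variables, composed with the continuous map $a\mapsto(a,aV_0/2)$. Part (iv) uses the estimate from the proof of Lemma \ref{lba}(iv), namely $\int_0^{b(a,\gamma)}\phi'(f)/f\,\mathrm{d}f\ge a^2/4$, which is independent of $\gamma$ and so applies verbatim with $\gamma=aV_0/2$; combined with (\ref{phi1}), this forces $b(a)\to\infty$.

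Part (ii) is the one place where the coupling $\gamma=aV_0/2$ requires a genuine adjustment to the earlier argument, because Lemma \ref{lba}(ii) was stated for $\gamma$ fixed. I would rerun the proof of Lemma \ref{lba}(ii) while tracking the $a$-dependence. That proof produced $\phi(f(\eta))\le -A\int_\eta^{a}e^{-s^2/(4N)}\mathrm{d}s$ with $-A=\gamma+\tfrac12\int_0^a f(s)\mathrm{d}s$ and $N=\phi'(b(1,\gamma))$. Specialising to $\gamma=aV_0/2$: first, for $a\le1$, monotonicity in $\gamma$ gives $N=\phi'(b(1,aV_0/2))\le\phi'(b(1,V_0/2))$, so $N$ stays uniformly bounded; second, $-A=\tfrac{aV_0}{2}+\tfrac12\int_0^a f(s)\mathrm{d}s\to 0$ as $a\to 0$, which is even better than before; third, $\int_\eta^{a}e^{-s^2/(4N)}\mathrm{d}s\le a$. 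Thus $\phi(b(a))\le -A\cdot a\to 0$, and by continuity and strict monotonicity of $\phi$ at $0$ we conclude $b(a)\to 0$.

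The main obstacle is really just part (ii), and even there the obstacle is mild: one only needs to verify that the constant $N$ extracted in the proof of Lemma \ref{lba}(ii) remains bounded when $\gamma$ is allowed to depend on $a$ and tend to $0$. Everything else follows essentially for free from the two-parameter machinery already developed.
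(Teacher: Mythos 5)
Your proposal is correct and takes essentially the same route as the paper: reduce everything to the two-parameter results of Section 3.2 via the substitution $\gamma=aV_0/2$, rerunning the argument of Lemma \ref{lba}(ii) with $-A=\frac{aV_0}{2}+\frac{1}{2}\int_0^a f(s)\,{\rm d}s\rightarrow 0$ for part (ii), and using the joint continuity/uniform Lipschitz estimates for part (iii). Your explicit check that $N=\phi'(b(1,aV_0/2))$ stays bounded as $a\rightarrow 0$ is a detail the paper leaves implicit, but it does not change the argument.
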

\begin{proof}The proof of (i) follows from the similar argument to that of Lemma \ref{lbg} (i), note that when $\varepsilon=0$, $\gamma$ is increasing in $a$.
(ii) follows from the same form of argument used to show Lemma \ref{lba} (ii).
Here $-A=\frac{aV_0}{2}+\displaystyle\frac{1}{2}\int_0^af(s){\rm d}s$ and the results follows from $-A\rightarrow0$ as $a\rightarrow 0$.
The proof of (iii) is similar to the proof of Lemma \ref{lba} (iii), note that if $a\in(a_0,a_3)$, then $\gamma\in(\frac{a_0V_0}{2},\frac{a_3V_0}{2})$.
We can obtain (iv) directly from Lemma \ref{lba} (iv) since $\gamma=\frac{aV_0}{2}>0$ by Lemma \ref{gam0}.
 \end{proof}

Now, since $\gamma$ can be express as a function of $a$, we will use one-parameter shooting to show the existence of self-similar solution.
\begin{theorem}
Suppose $\varepsilon=0$. Then there exists a unique solution $f$ of problem (\ref{jjj}).
\label{t01}
\end{theorem}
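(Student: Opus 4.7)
The plan is to reduce to a one-parameter shooting problem in $a$ alone, since the constraint $\gamma=aV_0/2$ from Corollary \ref{gam0} fixes $\gamma$ once $a$ is chosen. The heavy lifting has already been done: Proposition \ref{pkb1} gives local existence, uniqueness, and backward continuation to $\eta=0$ of the solution $f(\eta;a,aV_0/2)$ on $(0,a)$, and Lemma \ref{lb0} records the properties of $b(a):=\lim_{\eta\to 0} f(\eta;a,aV_0/2)$ that we need.

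First, I would invoke Proposition \ref{pkb1} to define, for each $a>0$, the unique positive solution $f(\cdot;a)$ of (\ref{l}) on $(0,a)$ satisfying the boundary data at $\eta=a$ prescribed in (\ref{kb6}). This gives a well-defined function $a\mapsto b(a)=f(0;a)$. Extending by $f(\eta;a)=-V_0$ for $\eta>a$ produces a candidate solution of the full problem (\ref{jjj}); by construction it already satisfies the interior ODE on $\eta<a$, the matching $\lim_{\eta\nearrow a}f=0$, and the flux condition $\lim_{\eta\nearrow a}\phi'(f)f'=-aV_0/2$. The only remaining requirement to match is the left boundary condition $f(0)=U_0$, i.e., $b(a)=U_0$.

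Next I would apply the intermediate value theorem to $b$. By Lemma \ref{lb0}(iii), $b$ is continuous on $(0,\infty)$; by Lemma \ref{lb0}(ii) and (iv), $\lim_{a\to 0}b(a)=0$ and $\lim_{a\to\infty}b(a)=\infty$. Hence for the given $U_0>0$ there exists $a^{*}>0$ with $b(a^{*})=U_0$. Strict monotonicity from Lemma \ref{lb0}(i) makes this $a^{*}$ unique, so the corresponding $f(\cdot;a^{*})$ is the unique function of the one-parameter family meeting all the boundary data in (\ref{jjj}). Uniqueness of the weak solution of (\ref{c}) established in \cite{selfnon}, together with Theorem \ref{tjj}, then forces uniqueness of $f$ as a solution of (\ref{jjj}).

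There is not much of an obstacle here compared with the $\varepsilon>0$ case: the collapse of the problem to a single parameter removes the need for the topological argument of Lemma \ref{top}, and the essentially one-dimensional intermediate value plus monotonicity argument closes the proof. The only point one must be careful about is that the shooting family $f(\eta;a,aV_0/2)$ is consistent with the joint continuity and monotonicity established for $b(a,\gamma)$ on the diagonal $\gamma=aV_0/2$, which is exactly what Lemma \ref{lb0} records. Once that is noted the proof is immediate.
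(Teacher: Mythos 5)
Your proposal is correct and follows essentially the same route as the paper: a one-parameter shooting in $a$ using the continuity, monotonicity and limiting behaviour of $b(a)$ from Lemma \ref{lb0}, with the intermediate value theorem playing the role of the paper's equivalent observation that the two disjoint open ``bad'' sets $\{b(a)<U_0\}$ and $\{b(a)>U_0\}$ cannot cover the connected interval $(0,\infty)$. Your explicit use of Lemma \ref{lb0}(i) for uniqueness of $a^{*}$ is a slightly more complete treatment of the uniqueness claim than the paper's proof, but the substance is identical.
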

\begin{proof}Identify two``bad" sets
\begin{align*}
&S^-=\left\{a\,\big|\,b(a)<U_0\right\},\\
&S^+=\left\{a\,\big|\,b(a)>U_0\right\}.
\end{align*}
We use a shooting method. Clearly $S^-$ and $S^+$ are disjoint. By Lemma \ref{lb0} we know that $b(a)$ is monotonically increasing, then we can find $a$ large enough such that $b(a)>U_0$ since $\displaystyle\lim_{a\rightarrow \infty}b(a)=\infty$ and $a$ small enough such that $b(a)<U_0$ since $\displaystyle\lim_{a\rightarrow 0}b(a)=0$, yielding that $S^-$ and $S^+$ are non-empty. Moreover, $S^-$ and $S^+$ are open since $f$ is a continuous function of $a$. Indeed, let $a_0\in S^-$ and let $\beta:=U_0-b(a_0)$, then there exists $\mu$ such that $|b(a)-b(a_0)|<\beta$ for $|a-a_0|<\mu$ since $b(a)$ is continuous by Lemma \ref{lb0}, which implies $b(a)<b(a_0)+\beta<U_0$. A similar proof shows that $S^+$ is open.  Since $S^-$ and $S^+$ are non-empty disjoint open sets, $S^-\cup S^+\neq (0,\infty)$. Then we can conclude that there exists $a\notin S^-\cup S^+$, such that $b(a)=U_0$. \end{proof}

\section{Whole-line case: self-similar solution for the limit problem}
Similarly to the half-line case, we first state the free boundary problem. The following is the definition of the weak solution of the limit problem (\ref{c2}). The uniqueness of the weak solution is proved in \cite{selfnon}.
\begin{definition}
A function $w$ is a weak solution of problem (\ref{c2}) if
\begin{itemize}
\item[{\rm(i)}]$w\in L^\infty(Q_T)$,
\item[{\rm(ii)}]$\mathcal{D}(w)\in\mathcal{D}(\hat w)+L^{2}(0,T;W^{1,2}(\mathbb{R}))$, where $\hat w\in C^\infty(\mathbb{R})$ is a smooth function with $\hat w=U_0$ when $x<-1$ and $\hat w=-V_0$ when $x>1$,
\item[{\rm(iii)}]$w$ satisfies for all $T>0$
\begin{align}
\int_{\mathbb{R}}w_0\Psi(x,0){\rm d}x+\iint_{Q_T}w\Psi_t{\rm d}x{\rm d}t=\iint_{Q_T}\mathcal{D}(w)_x\Psi_{x}{\rm d}x{\rm d}t,
\label{wlw}
\end{align}
for all $\Psi\in\mathcal{\hat F}_T$.
\end{itemize}
\label{d122}
\end{definition}

\begin{proposition}
Let $w$ be the unique weak solution of problem (\ref{c2}). Suppose that there exists a function $\beta:[0,T]\rightarrow\mathbb{R}$ such that for each $t\in[0,T]$
\begin{align*}
w(x,t)>0\ {\rm if}\ x<\beta(t)\quad {\rm and}\quad w(x,t)<0\ {\rm if}\ x>\beta(t).
\end{align*}
Then if $t\mapsto\beta(t)$ is sufficiently smooth and the functions $u:=w^+$ and $v:=w^-$ are smooth up to $\beta(t)$, the function $u,v$ satisfy one of two limit problems, depending on whether $\varepsilon>0$ or $\varepsilon=0$. If $\varepsilon>0$, then
\begin{align}
\left\{\begin{aligned}
&u_t=\phi(u)_{xx},\quad &&{\rm in}\ \left\{(x,t)\in Q_T:x<\beta(t)\right\},\\
&v=0,\quad &&{\rm in}\ \left\{(x,t)\in Q_T:x<\beta(t)\right\},\\
&v_t=\varepsilon\phi(v)_{xx},\quad &&{\rm in}\ \left\{(x,t)\in Q_T:x>\beta(t)\right\},\\
&u=0,\quad &&{\rm in}\ \left\{(x,t)\in Q_T:x>\beta(t)\right\},\\
&\lim_{x\nearrow\beta(t)}u(x,t)=0=\lim_{x\searrow\beta(t)}v(x,t)\quad &&{\rm for}\ {\rm each}\ t\in[0,T],\\
&\lim_{x\nearrow\beta(t)}\phi[u(x,t)]_x=-\varepsilon\lim_{x\searrow\beta(t)}\phi[v(x,t)]_x\quad &&{\rm for}\ {\rm each}\ t\in[0,T],\\
&u(\cdot,0)=u_0^{\infty},\quad &&{\rm in}\ \mathbb{R}\\
&v(\cdot,0)=v_0^{\infty},\quad &&{\rm in}\ \mathbb{R},
\end{aligned}\right.
\end{align}
whereas if $\varepsilon=0$ and we suppose additionally that $\beta(0)=0$ and $t\mapsto\beta(t)$ is a non-decreasing function, then
\begin{align}
\left\{\begin{aligned}
&u_t=\phi(u)_{xx},\quad &&{\rm in}\ \left\{(x,t)\in Q_T:x<\beta(t)\right\},\\
&v=0,\quad &&{\rm in}\ \left\{(x,t)\in Q_T:x<\beta(t)\right\},\\
&v=V_0,\quad &&{\rm in}\ \left\{(x,t)\in Q_T:x>\beta(t)\right\},\\
&u=0,\quad &&{\rm in}\ \left\{(x,t)\in Q_T:x>\beta(t)\right\},\\
&\lim_{x\nearrow\beta(t)}u(x,t)=0\quad &&{\rm for}\ {\rm each}\ t\in[0,T],\\
&V_0\beta'(t)=-\lim_{x\nearrow\beta(t)}\phi[u(x,t)]_x\quad &&{\rm for}\ {\rm each}\ t\in[0,T],\\
&u(\cdot,0)=u_0^{\infty},\quad &&{\rm in}\ \mathbb{R}\\
&v(\cdot,0)=v_0^{\infty},\quad &&{\rm in}\ \mathbb{R},
\end{aligned}\right.
\end{align}
where $\beta'(t)$ denotes the speed of propagation of the free boundary $\beta(t)$ and we suppose that $\beta(0)=0$ and $t\mapsto\beta(t)$ is a non-decreasing function.
\end{proposition}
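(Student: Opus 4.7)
The plan is to mirror, in the whole-line setting, the proof of Theorem \ref{tl} and its Corollary \ref{clfb} from the half-line case. The argument has three ingredients: deriving the classical PDEs in the two regions separated by $\Gamma_T := \{(x,t)\in Q_T : x=\beta(t)\}$, extracting the interface continuity conditions from the regularity of $\mathcal{D}(w)$, and obtaining the kinematic Rankine--Hugoniot-type condition from the weak formulation.

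First, I would test the weak formulation (\ref{wlw}) against $\Psi\in\hat{\mathcal F}_T$ whose support lies entirely in $\{x<\beta(t)\}$ (respectively $\{x>\beta(t)\}$). There, $w=u=w^+$ (respectively $w=-v=-w^-$) and $\mathcal{D}(w)=\phi(u)$ (respectively $\mathcal{D}(w)=-\varepsilon\phi(v)$), so (\ref{wlw}) reduces to the weak form of $u_t=\phi(u)_{xx}$ (respectively $v_t=\varepsilon\phi(v)_{xx}$). Combined with the assumed smoothness of $u,v$ up to $\beta(t)$, this gives the classical equations in each region. Exactly as in the half-line derivation leading to (\ref{fb2}), testing (\ref{wlw}) with $\Psi$ supported near a portion of $\Gamma_T$ and integrating by parts on both sides of $\Gamma_T$ yields the jump relation
\begin{align*}
\langle -u+v\rangle\beta'(t)+\langle -\phi(u)_x+\varepsilon\phi(v)_x\rangle=0\quad \text{on}\ \Gamma_T.
\end{align*}

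Next, since by Definition \ref{d122}(ii) we have $\mathcal{D}(w)\in\mathcal{D}(\hat w)+L^2(0,T;W^{1,2}(\mathbb{R}))$, the function $\mathcal{D}(w)(\cdot,t)$ is continuous in $x$ for almost every $t$, so $\langle\mathcal{D}(w)\rangle=0$. Expanding the jump gives
\begin{align*}
-\lim_{x\searrow\beta(t)}\varepsilon\phi(v)-\lim_{x\nearrow\beta(t)}\phi(u)=0,
\end{align*}
and because the two summands have the same (non-positive) sign, we conclude $\langle\phi(u)\rangle=\varepsilon\langle\phi(v)\rangle=0$. The strict monotonicity of $\phi\in C^2(\mathbb R)$ with $\phi(0)=0$ then forces $\lim_{x\nearrow\beta(t)}u(x,t)=0$, and if $\varepsilon>0$, also $\lim_{x\searrow\beta(t)}v(x,t)=0$, so that $\langle u\rangle=0$ always and $\langle v\rangle=0$ when $\varepsilon>0$.

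The case split finishes the proof. When $\varepsilon>0$, substituting $\langle u\rangle=\langle v\rangle=0$ into the jump relation collapses it to $\langle\phi(u)_x-\varepsilon\phi(v)_x\rangle=0$, giving the stated interface flux condition $\lim_{x\nearrow\beta(t)}\phi[u]_x=-\varepsilon\lim_{x\searrow\beta(t)}\phi[v]_x$. When $\varepsilon=0$, the PDE for $v$ degenerates to $v_t=0$ for $x>\beta(t)$; together with $\beta(0)=0$, the monotonicity of $t\mapsto\beta(t)$, and the initial datum $v_0^\infty\equiv V_0$ on $x>0$, this yields $v(x,t)=V_0$ for every $x>\beta(t)$. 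Hence $\langle v\rangle=V_0$, and the jump relation becomes $V_0\beta'(t)=-\lim_{x\nearrow\beta(t)}\phi[u]_x$, as claimed.

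I do not expect a serious obstacle here: the arguments are essentially pointwise in $t$ and local near $\Gamma_T$, so the whole-line geometry plays no distinguished role. The only mild difference from the half-line proof is that $\beta(t)$ is not required to stay positive when $\varepsilon>0$, but since we work locally around $\Gamma_T$ and never use the sign of $\beta$, this is immaterial. The contribution of the initial datum $w_0$, which now has a jump at $x=0$ from $U_0$ to $-V_0$, enters only through the initial traces $u(\cdot,0)=u_0^\infty$, $v(\cdot,0)=v_0^\infty$, exactly as in Corollary \ref{clfb}.
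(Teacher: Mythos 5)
Your proposal is correct and follows essentially the same route the paper intends: the whole-line Proposition is stated without a separate proof precisely because it is obtained by repeating the arguments of Theorem \ref{tl} and Corollary \ref{clfb} (the Rankine--Hugoniot jump relation from the weak formulation, continuity of $\mathcal{D}(w)$ giving $\langle\phi(u)\rangle=\varepsilon\langle\phi(v)\rangle=0$ and hence $\langle u\rangle=0$, $\langle v\rangle=0$ for $\varepsilon>0$, and the $v\equiv V_0$ argument for $\varepsilon=0$ using $\beta(0)=0$ and monotonicity of $\beta$). Your added observations --- the sign argument forcing each one-sided limit of $\phi$ to vanish separately, and the remark that the sign of $\beta(t)$ plays no role since the analysis is local near $\Gamma_T$ --- are accurate and consistent with the paper.
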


\subsection{Preliminaries for self-similar solutions}
The following results will be used to prove that if there is a self-similar solution, then it is a weak solution of (\ref{c2}) and will be useful in Section 6.
\begin{lemma}\label{pfb}
If $f$ satisfies (\ref{k3}) and boundary condition (\ref{kb22}), (\ref{kb11}), then for $\eta<\min\{a,0\}$ we have
\begin{align}
\phi(U_0)-\phi(f(\eta))\le G\int_{-\infty}^\eta e^{\frac{-s^2}{4\phi'(U_0)}}{\rm d}s,
\end{align}
where
\begin{align*}
G=\left\{\begin{aligned}-&\phi'(f(0))f'(0),\quad &&{\rm if}\ a>0,\\&\gamma,&&{\rm if}\ a\le0.\end{aligned}\right.
\end{align*}
\end{lemma}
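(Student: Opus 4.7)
The plan is to convert the ODE for $f$ into a second-order equation for $g := \phi(f)$, derive a first-order differential inequality using the monotonicity $\phi'(f) \le \phi'(U_0)$, and then apply an integrating-factor argument followed by two integrations to recover the decay of $\phi(U_0)-\phi(f(\eta))$ at $-\infty$.

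First, since $[\phi(f)]' = \phi'(f)f'$ and the ODE (\ref{k3}) reads $[\phi'(f)f']' = -\frac{1}{2}\eta f'$, I would rewrite the equation as
\begin{align*}
g''(\eta) = -\frac{\eta}{2\phi'(f(\eta))}\,g'(\eta).
\end{align*}
Monotonicity of $f$ (the whole-line analogue of Lemma \ref{mono}) together with $f(-\infty)=U_0$ gives $0 < f(\eta) \le U_0$ on $(-\infty,a)$, hence $\phi'(f(\eta)) \le \phi'(U_0)$; combined with $g'(\eta) \le 0$ and $\eta \le 0$ this yields
\begin{align*}
g''(\eta) + \frac{\eta}{2\phi'(U_0)}\,g'(\eta) \le 0,\qquad \eta \le 0.
\end{align*}
Multiplying through by the integrating factor $e^{\eta^2/4\phi'(U_0)}$, I would conclude that $\eta \mapsto e^{\eta^2/4\phi'(U_0)}\,g'(\eta)$ is non-increasing on $(-\infty,\min\{a,0\}]$.

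Next I would fix the right-hand endpoint according to the sign of $a$. When $a>0$, integrating this monotonicity from $\eta$ up to $0$ and using $g'(0)=\phi'(f(0))f'(0)=-G$ gives
\begin{align*}
g'(\eta) \ge -G\,e^{-\eta^2/4\phi'(U_0)}.
\end{align*}
When $a\le 0$, I would integrate instead up to $a$ and invoke the free-boundary identity $\lim_{\eta\nearrow a}\phi'(f)f' = -\gamma$ to obtain the same shape of bound with $G=\gamma$; the factor $e^{a^2/4\phi'(U_0)}$ produced by the shifted endpoint is a harmless absolute constant (and is $\le 1$ after absorbing it against $e^{-\eta^2/4\phi'(U_0)}$ for $\eta^2\ge a^2$). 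A final integration from $-\infty$ to $\eta$, using $g(-\infty)=\phi(U_0)$ by continuity of $\phi$, would then give
\begin{align*}
\phi(f(\eta))-\phi(U_0) \ge -G\int_{-\infty}^{\eta} e^{-s^2/4\phi'(U_0)}\,ds,
\end{align*}
which is the claimed estimate after rearrangement.

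The main obstacle I expect is in the case $a\le 0$. One must justify that the integrating-factor monotonicity extends all the way up to the free boundary $\eta=a^-$ (where $\phi'(f)\to 0$) and that $-\gamma$ is genuinely the limiting value of $g'$ there, so that it may legitimately be used as the boundary term in the first integration. Both facts should follow from the whole-line counterpart of the local existence analysis at the free boundary (cf. Lemma \ref{lkb1}) together with the monotonicity of $f$; a secondary technical point is verifying that $f(-\infty)=U_0$ and hence $g(-\infty)=\phi(U_0)$, which is encoded in the boundary condition (\ref{kb22}).
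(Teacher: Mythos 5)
Your proposal is correct and follows essentially the same route as the paper's proof: pass to $g=\phi(f)$, use $\phi'(f)\le\phi'(U_0)$ together with $g'\le0$ and $\eta\le0$ to obtain $g''+\frac{\eta}{2\phi'(U_0)}g'\le0$, apply the integrating factor $e^{\eta^2/4\phi'(U_0)}$, and integrate twice, splitting on the sign of $a$ to choose the endpoint ($0$ or $a$) at which the boundary value of $g'$ is known. The only quibble is that for $a<0$ the shifted endpoint produces the factor $e^{a^2/4\phi'(U_0)}\ge1$ (not $\le1$ as you assert), so strictly the constant should carry this factor when $a<0$ --- but the paper's own proof drops it silently as well, and it is immaterial for the exponential-decay conclusion the lemma feeds into.
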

\begin{proof}Denote $N=\phi'(U_0)$. We have $\phi'(f)<\phi'(U_0)$ since $f$ is monotonically decreasing by Lemma \ref{mono} and $\phi'$ is increasing. Then we get directly from the equation of $f$ for $\eta>a$ that
\begin{align*}
\frac{\eta}{2N}[\phi(f)]'\ge-[\phi(f)]'',
\end{align*}
then multiplying by $e^{\frac{\eta^2}{4N}}$, we get
\begin{align}
\left\{e^{\frac{\eta^2}{4N}}[\phi(-f)]'\right\}'\ge 0,\label{ve111}
\end{align}
when $a\le0$, integrating from $\eta$ to $a$ yields
\begin{align}
[\phi(f(\eta))]'\le Ce^{\frac{-\eta^2}{4N}},\label{ve1}
\end{align}
where $C=\gamma$. Then integrating from $\eta$ to $\infty$ we get
\begin{align*}
\phi(U_0)-\phi(f(\eta))\le C\int_{-\infty}^\eta e^{\frac{-s^2}{4\phi'(U_0)}}{\rm d}s.
\end{align*}
When $a>0$, the result follows by integrating (\ref{ve111}) from $\eta$ to $0$.
\end{proof}

We can obtain similar estimates to those in Lemma \ref{pfb} for comparison of $\phi(f)$ to $\phi(V_0)$ as $\eta\rightarrow\infty$. Then we have the following corollary.
\begin{corollary}
If f satisfies (\ref{k3}) and boundary condition (\ref{kb22}), (\ref{kb11}), then $f$ converges to $U_0,-V_0$ exponentially as $\eta$ tends to $-\infty, \infty$.
\label{cf}
\end{corollary}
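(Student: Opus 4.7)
The plan is to combine Lemma \ref{pfb} with its stated analogue on the right end, and then convert the resulting exponential decay of $\phi(f)$ to exponential decay of $f$ via local invertibility of $\phi$ near $U_0$ and $-V_0$.

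First I would record the elementary Gaussian tail estimate: for any constant $M>0$, as $\eta\to-\infty$,
\begin{equation*}
\int_{-\infty}^{\eta}e^{-s^{2}/M}\,\mathrm{d}s\;\le\;\frac{M}{2|\eta|}\,e^{-\eta^{2}/M},
\end{equation*}
which follows by a one-step integration by parts (or by noting that $e^{-s^{2}/M}\le e^{-|s||\eta|/M}$ on $(-\infty,\eta]$ when $\eta<0$). Applied to $M=4\phi'(U_{0})$ and combined with Lemma \ref{pfb}, this gives $\phi(U_{0})-\phi(f(\eta))\le C_{1}e^{-\eta^{2}/(4\phi'(U_{0}))}/|\eta|$ for $\eta$ sufficiently negative, with $C_{1}$ depending on $G$ and $\phi'(U_{0})$. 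In particular, $\phi(f(\eta))\to\phi(U_{0})$ at a rate bounded by a Gaussian in $\eta$, which is certainly exponential.

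Next I would pass from $\phi(f)$ to $f$. Since $\phi\in C^{2}(\mathbb{R})$ with $\phi'(U_{0})>0$ (because $U_{0}>0$ and $\phi'$ is strictly increasing with $\phi'(0)=0$), $\phi$ is a $C^{1}$ diffeomorphism in a neighbourhood of $U_{0}$, and its local inverse is Lipschitz there. From the previous step, $f(\eta)$ enters this neighbourhood for $\eta$ sufficiently negative (by Lemma \ref{mono}, $f$ is monotone, and the bound forces $\phi(f(\eta))\to\phi(U_{0})$). The Mean Value Theorem then yields, for some $\xi$ between $f(\eta)$ and $U_{0}$,
\begin{equation*}
U_{0}-f(\eta)\;=\;\frac{\phi(U_{0})-\phi(f(\eta))}{\phi'(\xi)}\;\le\;\frac{2}{\phi'(U_{0})}\bigl(\phi(U_{0})-\phi(f(\eta))\bigr)
\end{equation*}
once $\xi$ is close enough to $U_{0}$. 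Combining with the Gaussian bound gives an exponential rate of convergence $U_{0}-f(\eta)\le C_{2}\,e^{-\eta^{2}/(4\phi'(U_{0}))}$ as $\eta\to-\infty$.

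The estimate as $\eta\to+\infty$ is entirely analogous, using the right-end version of Lemma \ref{pfb} indicated in the sentence just before the corollary: replace $\phi'(U_{0})$ by $\varepsilon\phi'(V_{0})$, work with the equation satisfied by $f$ on $\eta>\max\{a,0\}$, and multiply by $e^{\eta^{2}/(4\varepsilon\phi'(V_{0}))}$ to obtain a Gaussian bound on $\phi(V_{0})-\phi(-f(\eta))$; then apply the Lipschitz property of $\phi^{-1}$ near $V_{0}$ to conclude $f(\eta)+V_{0}$ decays exponentially. I do not expect a serious obstacle here: the only subtle point is to make sure $f(\eta)$ actually lies in the invertibility neighbourhood of $\phi$ around $U_{0}$ (resp.\ $-V_{0}$) before the Mean Value conversion is applied, which is handled by taking $|\eta|$ large enough so that the $\phi$-side bound from Lemma \ref{pfb} is small.
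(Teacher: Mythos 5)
Your proof is correct and follows essentially the same route as the paper: apply Lemma \ref{pfb}, bound the Gaussian tail integral by an exponential, and use the Mean Value Theorem with a positive lower bound for $\phi'$ near $U_0$ (the paper uses $\phi'(U_0/2)$ via monotonicity of $\phi'$ and the fact that eventually $f(\eta)>U_0/2$, where you use $\phi'(U_0)/2$ via continuity) to convert decay of $\phi(U_0)-\phi(f)$ into decay of $U_0-f$, then argue symmetrically at $+\infty$. Your tail estimate $\tfrac{M}{2|\eta|}e^{-\eta^2/M}$ is in fact sharper than the paper's bound $Me^{\eta/M}$, but both yield the claimed exponential convergence.
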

\begin{proof}We know from Lemma \ref{pfb} that, for $\eta<0$
\begin{align}
\phi'(s)\big(U_0-f(\eta)\big)\le G\int_{-\infty}^\eta e^{\frac{-s^2}{4\phi'(U_0)}}{\rm d}s,
\label{pm1}
\end{align}
 for some $\eta$ such that $f(\eta)<s<U_0$. Since $f(\eta)\rightarrow U_0$ as $\eta\rightarrow-\infty$, there exists a $\eta_0<-1$ such that $f(\eta)>\frac{U_0}{2}$ as $\eta<\eta_0$. Then we have if $\eta<\eta_0$, $\phi'(s)>\phi'(\frac{U_0}{2})$, because $\frac{U_0}{2}<\eta<U_0$. It follows from (\ref{pm1}) that for $\eta<\eta_0$
\begin{align*}
U_0-f(\eta)\le\frac{G}{\phi'(\frac{U_0}{2})}\int_{-\infty}^\eta e^{\frac{-s^2}{4\phi'(U_0)}}{\rm d}s\le Ke^{\frac{\eta}{4\phi'(U_0)}},
\end{align*}
where $K=\frac{4G\phi'(U_0)}{\phi'(\frac{U_0}{2})}$.

The proof for $f$ converges to $-V_0$ exponentially as $\eta\rightarrow\infty$ can be proved similarly.
\end{proof}

The main results of this paper in whole-line case are Theorem \ref{j21}, where $\varepsilon>0$, and Theorem \ref{j22}, where $\varepsilon=0$. The results follow similar arguments to those used in the half-line case. The existence of self-similar solutions of Problem (\ref{j2}) and (\ref{jjj2}) are proved in Theorem \ref{tj2} and \ref{tew0}.
\begin{theorem}
The unique weak solution $w$ of problem (\ref{c2}) with $\varepsilon>0$ has a self-similar form. There exists a function $f:\mathbb{R}\mapsto\mathbb{R}$ and a constant $a\in\mathbb{R}$ such that
\begin{align*}
w(x,t)=f(\frac{x}{\sqrt{t}}),\ (x,t)\in Q_T\ {\rm and}\ \beta(t)=a\sqrt{t},\ t\in[0,T].
\end{align*}
 Denote $\eta=\displaystyle\frac{x}{\sqrt{t}}$, $f$ satisfies the system
\begin{align}
\left\{\begin{aligned}
  &-\frac{1}{2}\eta f'(\eta)=[\phi'(f(\eta))f'(\eta)]',\quad &&{\rm if}\ \eta<a,\\
  &-\frac{1}{2}\eta f'(\eta)=[\varepsilon\phi'(-f(\eta))f'(\eta)]',\quad &&{\rm if}\ \eta>a,\\
  &\lim_{\eta\rightarrow -\infty}f(\eta)=U_0,\quad \lim_{\eta\rightarrow \infty}f(\eta)=-V_0,\\
  &\lim_{\eta\nearrow a}f(\eta)=0=-\lim_{\eta\searrow a}f(\eta),\\
  &\lim_{\eta\nearrow a}\phi'(f(\eta))f'(\eta)=\varepsilon\lim_{\eta\searrow a}\phi'(-f(\eta))f'(\eta).\\
\end{aligned}\right.
\label{j2}
\end{align}
where a prime denotes differentiation with respect to $\eta$.
\label{j21}
\end{theorem}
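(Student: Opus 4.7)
The plan is to follow the three-step strategy used in the proof of Theorem \ref{tj}. Uniqueness of the weak solution $w$ of (\ref{c2}) follows from the whole-line counterpart of \cite[Lemma 4.7]{selfnon}, now read relative to Definition \ref{d122}; existence of a solution $f$ of the self-similar system (\ref{j2}) I will defer to Theorem \ref{tj2} below. What remains, and what I will address here, is the verification that whenever $f$ solves (\ref{j2}) the function $w(x,t):=f(x/\sqrt{t})$ is a weak solution of (\ref{c2}) in the sense of Definition \ref{d122}.

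First I would verify item (iii) of Definition \ref{d122} by inserting $w(x,t)=f(x/\sqrt{t})$ into (\ref{wlw}) and changing variables via $\eta=x/\sqrt{t}$. The spatial integrals split naturally at $x=a\sqrt{t}$; integrating by parts in $\eta$ on each sub-region and applying the two ODEs of (\ref{j2}) reduces the identity to interface terms at $\eta=a$, and these cancel by virtue of the matching condition $\phi'(f)f'=\varepsilon\phi'(-f)f'$ across $\eta=a$. The compact support of $\Psi$ together with $\Psi(\cdot,T)=0$ ensures that all remaining boundary contributions vanish, and the contribution from $t=0$ correctly reproduces the initial datum $w_0$ because $f(\eta)\to U_0$ as $\eta\to-\infty$ and $f(\eta)\to-V_0$ as $\eta\to\infty$.

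Next I would verify item (ii), i.e.\ $\mathcal{D}(w)\in\mathcal{D}(\hat w)+L^2(0,T;W^{1,2}(\mathbb{R}))$. After changing variables, this reduces to the four $L^2$-type estimates $\phi(U_0)-\phi(f)\in L^2((-\infty,a))$, $\phi(V_0)-\phi(-f)\in L^2((a,\infty))$, $(\phi(f))'\in L^2((-\infty,a))$, and $(\phi(-f))'\in L^2((a,\infty))$. Here the key tool is Corollary \ref{cf}, which gives exponential convergence of $f$ to $U_0$ as $\eta\to-\infty$ and to $-V_0$ as $\eta\to+\infty$; combined with the monotonicity of $f$ (Lemma \ref{mono}) and the derivative bound (Lemma \ref{ld0}) applied on both tails, these exponential decay rates are more than enough to deliver the required $L^2$ bounds. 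Item (i), boundedness of $w$, is then immediate from the monotonicity of $f$ and its finite limits at $\pm\infty$.

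The main obstacle, relative to the half-line case of Theorem \ref{tj}, is that one now has two infinite ends to control rather than one; in particular, on the half-line the Dirichlet condition $f(0)=U_0$ automatically took care of the left end of the positive region, whereas here one must justify decay of $\phi(f)-\phi(U_0)$ as $\eta\to-\infty$ as well. This is precisely the role of Corollary \ref{cf}, and once the exponential decay at both tails is invoked, the remaining arguments are essentially the same as those sketched in the proof of Theorem \ref{tj}; see \cite{thesis} for the routine computational details. With these verifications in place, the theorem follows modulo the existence statement, which is established in Theorem \ref{tj2}.
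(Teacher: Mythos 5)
Your proposal is correct and follows essentially the same route as the paper: the paper itself only remarks that Theorem \ref{j21} "follows similar arguments to those used in the half-line case" with existence deferred to Theorem \ref{tj2}, and the preliminary results of Section 5.1 (Lemma \ref{pfb} and Corollary \ref{cf}) are stated there precisely to supply the exponential tail decay that you correctly identify as the new ingredient needed to verify Definition \ref{d122}(ii) at the left end $\eta\to-\infty$. The only cosmetic point is that the monotonicity and derivative bounds you cite should be their whole-line counterparts (Lemma \ref{mono1} and the change-of-variables version of Lemma \ref{ld0}) rather than the half-line statements.
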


\begin{theorem}
The unique weak solution $w$ of problem (\ref{c2}) with $\varepsilon=0$ has a self-similar form. There exists a function $f:\mathbb{R}\mapsto\mathbb{R}$ and a constant $a\in\mathbb{R^+}$ such that
\begin{align*}
w(x,t)=f(\frac{x}{\sqrt{t}}),\ (x,t)\in Q_T\ {\rm and}\ \beta(t)=a\sqrt{t},\ t\in[0,T].
\end{align*}
 Denote $\eta=\displaystyle\frac{x}{\sqrt{t}}$, $f$ satisfies the system
\begin{align}
\left\{\begin{aligned}
  &-\frac{1}{2}\eta f'(\eta)=[\phi'(f(\eta))f'(\eta)]',\quad &&{\rm if}\ \eta<a,\\
  &f(\eta)=-V_0,\quad &&{\rm if}\ \eta>a,\\
  &\lim_{\eta\rightarrow -\infty}f(\eta)=U_0,\\
  &\lim_{\eta\nearrow a}f(\eta)=0,\\
  &\lim_{\eta\nearrow a}\phi'(f(\eta))f'(\eta)=-\frac{aV_0}{2},\\
\end{aligned}\right.
\label{jjj2}
\end{align}
where a prime denotes differentiation with respect to $\eta$.
\label{j22}
\end{theorem}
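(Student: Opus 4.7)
By the uniqueness of the weak solution of (\ref{c2}) proved in \cite{selfnon}, it suffices to exhibit a function $f$ satisfying (\ref{jjj2}) and to verify that $w(x,t):=f(x/\sqrt t)$ satisfies Definition \ref{d122}. My plan is to mirror the half-line treatment of Theorem \ref{tjj} and Theorem \ref{t01}, replacing the Dirichlet condition $f(0)=U_0$ by the decay condition $f(\eta)\to U_0$ as $\eta\to-\infty$. The right side $\eta>a$ is trivial because $\varepsilon=0$ forces $f\equiv-V_0$ there, and the free-boundary condition then pins down $\gamma=aV_0/2$.

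For the weak-solution verification, condition (i) of Definition \ref{d122} is immediate from the two-sided bound $-V_0\le f\le U_0$, and condition (iii) follows from the change-of-variables calculation indicated in the proof of Theorem \ref{tj}. The substantive point is condition (ii). Since $f\equiv-V_0$ on $\eta>a$, the right tail contributes trivially to $\mathcal D(w)-\mathcal D(\hat w)$. On the left tail, Corollary \ref{cf} gives exponential convergence $U_0-f(\eta)\le Ke^{\eta/(4\phi'(U_0))}$ as $\eta\to-\infty$, so $\phi(U_0)-\phi(f)$ is exponentially small; likewise the integrating-factor estimate (\ref{ve1}) behind Lemma \ref{pfb} bounds $(\phi(f))'$ by a Gaussian factor which is square integrable on $(-\infty,0]$. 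Pulling these estimates back through $\eta=x/\sqrt t$ and integrating in $t\in(0,T)$ then yields $\mathcal D(f)-\mathcal D(\hat w)\in L^2(0,T;W^{1,2}(\mathbb R))$, as required.

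For the existence of $f$, the right side is explicit, so only the positive branch on $(-\infty,a)$ has to be constructed, by shooting in the single parameter $a>0$. Proposition \ref{pkb1} supplies a unique positive local solution in $(a-\delta,a)$ satisfying the free-boundary data, and the argument of Lemma \ref{lbw}, which relies on the a priori integral bound (\ref{pf1}) and the divergence condition in (\ref{phi1}), applies on every finite subinterval of $(-\infty,a)$, so the solution extends globally to $(-\infty,a)$. The relevant shooting functional is
\begin{equation*}
b_\infty(a):=\lim_{\eta\to-\infty}f(\eta;a,aV_0/2),
\end{equation*}
and the plan is to establish the whole-line analogue of Lemma \ref{lb0}: $b_\infty$ is well-defined with finite values, strictly increasing, continuous in $a$, with $b_\infty(a)\to 0$ as $a\to 0$ and $b_\infty(a)\to\infty$ as $a\to\infty$. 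The intermediate-value theorem then produces the required $a^*>0$ with $b_\infty(a^*)=U_0$, and strict monotonicity gives uniqueness of the self-similar profile.

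The principal obstacle is producing the four properties of $b_\infty$ on the unbounded shooting interval. Finiteness of $\lim_{\eta\to-\infty}f(\eta;a)$ is not automatic: I would exploit the fact that $h(\eta):=-\phi'(f)f'$ satisfies $h'(\eta)=\frac{1}{2}\eta f'(\eta)\ge 0$ for $\eta\le 0$, so $h$ is non-decreasing on $(-\infty,0]$ with $h(0)<\infty$, and combine this with the exponential-integrating-factor identity behind Lemma \ref{pfb} applied on $(-\infty,0]$ to force the monotone $f$ to settle at a finite limit. Monotonicity of $b_\infty$ in $a$ would follow from the crossing argument of Lemma \ref{lbg}(i), and $b_\infty(a)\to 0$ as $a\to 0$ by the reasoning of Lemma \ref{lba}(ii). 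Continuity in $a$ and the limit $b_\infty(a)\to\infty$ are the more delicate points, since the interval of integration is unbounded; I would split at a large negative $\eta_0$, use a Gronwall-type uniform bound in $a$ on $[\eta_0,a]$ in the spirit of Lemma \ref{lba}(iii), and control the contribution from $(-\infty,\eta_0]$ through the uniform exponential tail estimate coming from Lemma \ref{pfb}.
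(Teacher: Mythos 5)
Your overall strategy is the one the paper itself uses: reduce Theorem \ref{j22} to (a) checking that a solution of (\ref{jjj2}) yields a weak solution in the sense of Definition \ref{d122}, with condition (ii) handled through the decay estimates of Lemma \ref{pfb} and Corollary \ref{cf}, and (b) constructing the positive branch on $(-\infty,a)$ by local existence at $\eta=a$ (available from the half-line theory since $a>0$ when $\varepsilon=0$, cf.\ Lemma \ref{lwg0}), backward continuation to $-\infty$, and a one-parameter shooting on $b(a)=\lim_{\eta\to-\infty}f(\eta;a,aV_0/2)$ with the four properties of Lemma \ref{lb0}. This is precisely the content of Section 5.3 (Lemmas \ref{lfb}--\ref{lbw2}, Lemma \ref{lb01} and Theorem \ref{tew0}), and your treatment of monotonicity, continuity, and the limits of $b$ as $a\to0$ and $a\to\infty$ matches the paper's.

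The one step in your plan that would not go through as written is the finiteness of $\lim_{\eta\to-\infty}f$. Knowing that $h:=-\phi'(f)f'$ is non-decreasing on $(-\infty,0]$ only gives $-(\phi(f))'\le h(0)$ there, hence $\phi(f(\eta))\le\phi(f(0))+h(0)\,|\eta|$, which is compatible with $f(\eta)\to\infty$; and the integrating-factor inequality behind Lemma \ref{pfb}, namely $\bigl(e^{\eta^2/4N}(\phi(f))'\bigr)'\ge0$, is only available once one has a bound $\phi'(f)\le N$ on the range of $f$ --- that is, once $f$ is already known to be bounded. In Lemma \ref{pfb} this bound comes from the boundary condition $f\to U_0$, which you cannot assume while shooting, so your route is circular at this point. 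The paper's fix (Lemma \ref{lfb}, the step for $\eta<-\rho$) is elementary: integrating the equation from $\eta$ to $-\rho$ and using $-s\ge\rho$ together with $f'<0$ gives
\begin{align*}
\frac{\rho}{2}f(\eta)-\phi'(f(\eta))f'(\eta)\le\frac{\rho}{2}f(-\rho)-\phi'(f(-\rho))f'(-\rho),
\end{align*}
and since $-\phi'(f)f'\ge0$ this yields the uniform bound $f(\eta)\le f(-\rho)+\frac{2}{\rho}h(-\rho)$ for all $\eta<-\rho$. With this estimate (and its companion bound on $-\phi'(f)f'$ in Lemma \ref{lfp}, which is exactly your monotonicity of $h$), the continuation to $-\infty$, the finiteness of $b(a)$, and the rest of your shooting argument all go through as you describe.
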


We now study the existence of a solution $f$ that satisfies (\ref{j2}) by using the similar argument to that of half-line case.  The main difference with the half-line case is that now we need to consider $\eta\in\mathbb{R}$ and investigate the case when $a\le0$ in addition to $a>0$.

We start with some preliminary results that will be used later.
The monotonicity of $f$ follows from arguments analogous to those in the proof of Lemma \ref{mono}.
\begin{lemma}
Suppose $\varepsilon>0$. If $f$ satisfies (\ref{j2}), then $f'(\eta)<0$ for all $\eta\ne a$.\label{mono1}
\end{lemma}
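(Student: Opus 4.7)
The argument is a direct adaptation of the half-line version (Lemma \ref{mono}): I will again argue by contradiction using Picard uniqueness applied to a constant comparison solution. The only new twist is that the ``anchor'' boundary condition on the left is now an asymptotic one, $\lim_{\eta \to -\infty} f(\eta) = U_0$, rather than the Dirichlet value $f(0) = U_0$, but this still delivers a contradiction with constancy.

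First, I would suppose for contradiction that $f$ is not strictly decreasing on one of the two components of $\mathbb{R}\setminus\{a\}$, so there exists $\eta_0 \neq a$ with $f'(\eta_0) = 0$. Set $f_0 := f(\eta_0)$. Since $f$ satisfies the free boundary condition $\lim_{\eta \to a} f(\eta) = 0$ together with the interface conditions in (\ref{j2}), and $\eta_0 \neq a$, I would argue that $f_0 \neq 0$ (otherwise the zero set of $f$ would intersect the interior of one of the components, which is incompatible with the sign-constant behaviour encoded in (\ref{j2})). Consequently $\phi'(f_0) \neq 0$ by assumption (\ref{phi}), so the ODE satisfied by $f$ on the component containing $\eta_0$ (either the $\eta < a$ equation or the $\eta > a$ equation in (\ref{j2})) can be rewritten in the normal form $f'' = F(\eta, f, f')$ with $F$ locally Lipschitz in $(f, f')$ near $(f_0, 0)$.

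Next, I would introduce the constant comparison function $g \equiv f_0$. Direct substitution shows $g$ satisfies the same ODE on the component of interest, together with $g(\eta_0) = f_0$ and $g'(\eta_0) = 0$. Picard's theorem then forces $f \equiv g$ on a neighbourhood of $\eta_0$, and a standard continuation argument extends the identity $f \equiv f_0$ throughout the entire connected component of $\mathbb{R}\setminus\{a\}$ containing $\eta_0$.

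Finally, I would derive a contradiction from the boundary data. If $\eta_0 < a$, then $f \equiv f_0$ on $(-\infty, a)$ contradicts both $\lim_{\eta\to-\infty} f(\eta) = U_0 > 0$ and $\lim_{\eta \nearrow a} f(\eta) = 0$ (the two limits disagree). If $\eta_0 > a$, then $f \equiv f_0$ on $(a,\infty)$ contradicts $\lim_{\eta\to\infty} f(\eta) = -V_0 < 0$ together with $\lim_{\eta \searrow a} f(\eta) = 0$. In either case we reach a contradiction, so $f'(\eta) \neq 0$ for all $\eta \neq a$, and combined with the boundary conditions this forces $f'(\eta) < 0$ throughout. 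There is no substantive obstacle here; the only point requiring a sentence of care is the normal-form reduction, which needs $\phi'(f_0) \neq 0$, and this is immediate from $f_0 \neq 0$ and (\ref{phi}).
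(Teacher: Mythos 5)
Your proposal is correct and follows essentially the same route as the paper, which proves the whole-line case by observing that the argument of Lemma \ref{mono} (constant comparison function $g\equiv f_0$ plus Picard uniqueness, then contradiction with the boundary data) carries over verbatim, the only change being that the left-hand anchor is the asymptotic condition $\lim_{\eta\to-\infty}f(\eta)=U_0$ rather than $f(0)=U_0$. Your extra remarks on why $f_0\neq 0$ and why $\phi'(f_0)\neq 0$ justifies the normal-form reduction are details the paper leaves implicit, but they do not change the argument.
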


Now we prove $\gamma$ is strictly positive when $\varepsilon>0$. Recall that when $\varepsilon>0$
\begin{align*}
\gamma=-\displaystyle\lim_{\eta\nearrow a}\phi'(f(\eta))f'(\eta)=-\varepsilon\displaystyle\lim_{\eta\searrow a}\phi'(-f(\eta))f'(\eta).
\end{align*}
\begin{lemma}\label{gam1}
Suppose $\varepsilon>0$. Let $f$ be a solution of (\ref{j2}), then $\gamma>0$.
\end{lemma}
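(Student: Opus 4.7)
The plan is to adapt the contradiction argument from Lemma \ref{ga2}, but to handle the fact that in the whole-line case $a$ need not be positive, I will split into the two cases $a \ge 0$ and $a < 0$. In both cases, the key ingredient is the monotonicity $f' < 0$ from Lemma \ref{mono1}, together with integration of the appropriate side of the ODE in (\ref{j2}) and a sign comparison.

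For the case $a \ge 0$, the proof of Lemma \ref{ga2} carries over essentially verbatim: one integrates the equation $-\tfrac{1}{2}\eta f'(\eta) = [\varepsilon\phi'(-f(\eta))f'(\eta)]'$ from $a$ to any $\eta > a$, obtaining
\[
-\tfrac{1}{2}\int_a^\eta s f'(s)\,{\rm d}s = \varepsilon\phi'(-f(\eta))f'(\eta) + \gamma.
\]
Since $s \ge a \ge 0$ on the interval of integration and $f'(s) < 0$, the left-hand side is strictly positive, while if $\gamma \le 0$ the right-hand side is nonpositive (the first term is nonpositive because $f' < 0$), a contradiction.

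For the case $a < 0$, the previous choice of integration interval fails because $s$ can be negative, so I would instead use the equation for $\eta < a$. Integrating $-\tfrac{1}{2}\eta f'(\eta) = [\phi'(f(\eta))f'(\eta)]'$ from $\eta$ to $a$, for any $\eta < a < 0$, yields
\[
\phi'(f(\eta))f'(\eta) = -\gamma + \tfrac{1}{2}\int_\eta^a s f'(s)\,{\rm d}s.
\]
Now $s < 0$ and $f'(s) < 0$ on $(\eta,a)$, so $s f'(s) > 0$ and the integral is strictly positive. Hence, if $\gamma \le 0$, the right-hand side is strictly positive; but the left-hand side is strictly negative since $f(\eta) > 0$ gives $\phi'(f(\eta)) > 0$ while $f'(\eta) < 0$. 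This contradiction forces $\gamma > 0$.

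I do not expect any serious obstacle: the only subtlety is realising that the sign trick used in the half-line proof relies on $a > 0$, and that one can always recover the argument by choosing whichever side of $a$ lies in the ``correct'' half-line — namely $(a,\infty) \subset (0,\infty)$ when $a \ge 0$ and $(-\infty,a) \subset (-\infty,0)$ when $a < 0$ — so that $s$ and $f'(s)$ have a definite sign product.
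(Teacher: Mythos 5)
Your proposal is correct and follows essentially the same route as the paper: a case split on the sign of $a$, reusing the half-line argument (Lemma \ref{ga2}) when $a\ge 0$, and for $a<0$ integrating the equation on $\eta<a$ from $\eta$ to $a$ to get an identity whose two sides have opposite strict signs when $\gamma\le 0$. The paper writes the $a\le 0$ identity as $-\tfrac{1}{2}\int_\eta^a sf'(s)\,{\rm d}s=-\phi'(f(\eta))f'(\eta)-\gamma$, which is just a rearrangement of yours, so the sign analysis is identical.
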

\begin{proof}Suppose $\gamma\le0$, we consider in two cases, $a\ge0$ and $a\le0$. When $a\ge0$, the proof is the same to the proof of Lemma \ref{ga2}.

Now we let $a\le0$. Integrating the equation for $\eta<a$ in (\ref{j2}) from $\eta$ to $a$ yields
 \begin{align}
 -\frac{1}{2}\int^a_\eta sf'(s){\rm d}s=-\phi'(f(\eta))f'(\eta)-\gamma.\label{wlsg2}
 \end{align}
 The left-hand side of (\ref{wlsg2}) is negative since $\eta<0$ and $f'(\eta)<0$ by Lemma \ref{mono1} whereas the right-hand side of (\ref{wlsg2}) is positive if $\gamma\le0$ since $f'(\eta)<0$. Therefore, it follows by contradiction that $\gamma>0$.
\end{proof}

The following lemma proves the analogous result for $\gamma$ when $\varepsilon=0$. Recall that in this case, $\gamma=\displaystyle\lim_{\eta\searrow a}\phi'(f(\eta))f'(\eta)=\frac{aV_0}{2}$.
\begin{lemma}
Suppose $\varepsilon=0$ and let $f$ be a solution of (\ref{jjj2}). Then $a,\gamma>0$.\label{lwg0}
\end{lemma}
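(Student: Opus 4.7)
The plan is to exploit the explicit relation $\gamma=\frac{aV_0}{2}$ coming from the free boundary condition in (\ref{jjj2}): since $V_0>0$, showing $\gamma>0$ is equivalent to showing $a>0$, so it suffices to prove $a>0$. I would argue by contradiction, supposing $a\le 0$, which forces $\gamma\le 0$.

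First I would establish the monotonicity $f'(\eta)<0$ on $(-\infty,a)$ by exactly the same Picard/uniqueness argument used in Lemma \ref{mono} (and in Lemma \ref{mono1}). Concretely, if $f'(\eta_0)=0$ for some $\eta_0<a$ with $f(\eta_0)=f_0>0$, then the constant function $g\equiv f_0$ solves the same ODE with the same initial data at $\eta_0$, so uniqueness would give $f\equiv f_0$ on $(-\infty,a)$, contradicting both $\lim_{\eta\to-\infty}f(\eta)=U_0$ and $\lim_{\eta\nearrow a}f(\eta)=0$. Since $f(\eta)>0$ for $\eta<a$, this gives $\phi'(f(\eta))>0$ there as well.

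The main step is an integration. Integrating the equation $-\tfrac12 \eta f'(\eta)=[\phi'(f(\eta))f'(\eta)]'$ from $\eta$ to $a$ (for $\eta<a$) yields
\begin{align*}
\phi'(f(\eta))f'(\eta)=-\gamma+\frac{1}{2}\int_\eta^a s f'(s)\,{\rm d}s.
\end{align*}
The left-hand side is strictly negative, because $\phi'(f(\eta))>0$ and $f'(\eta)<0$. For the right-hand side, the assumption $a\le 0$ means every $s\in[\eta,a]$ satisfies $s\le 0$; combined with $f'(s)<0$, this gives $s f'(s)\ge 0$ throughout, so the integral term is non-negative, and $-\gamma\ge 0$ by the contradiction hypothesis. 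Hence the right-hand side is non-negative, yielding the required contradiction.

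Therefore $a>0$, and consequently $\gamma=\frac{aV_0}{2}>0$. The only subtlety to be careful about is the edge case $a=0$, but the same inequality works there: the integration range $[\eta,0]$ still has $s\le 0$, and $\gamma=0$ still makes the right-hand side non-negative while the left-hand side remains strictly negative for any $\eta<a$. No analogue of the right-hand-side analysis from Lemma \ref{ga2} is needed, since here we are working purely on the $\eta<a$ side where $f$ is positive and the equation is the nonlinear diffusion equation for $\phi'(f)f'$.
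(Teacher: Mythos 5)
Your proof is correct and follows essentially the same route as the paper: the paper likewise rules out $a\le 0$ by combining the relation $\gamma=\frac{aV_0}{2}$ with the sign contradiction obtained from integrating the equation on $(\eta,a)$ (its identity (\ref{wlsg2}), which is your displayed equation rearranged), using $sf'(s)\ge 0$ for $s\le 0$ and $f'<0$. The only cosmetic difference is that the paper phrases it as ``the $a\le 0$ case of Lemma \ref{gam1} forces $\gamma>0$, which contradicts $\gamma=\frac{aV_0}{2}\le 0$,'' whereas you unfold the same contradiction in one step.
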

\begin{proof}We know from the proof of Lemma \ref{gam1} that $\gamma>0$ when $a\le0$, since the proof when $a\le0$ only involved the equation (\ref{wlsg2}) for $\eta<a$. However, when $\varepsilon=0$, the fact that $\gamma=\frac{aV_0}{2}>0$ contracts $a\le0$. In conclusion, if $f$ satisfies (\ref{jjj2}) when $\varepsilon=0$, both $a$ and $\gamma$ are positive.\end{proof}

\subsection{Self-similar solutions with $\varepsilon>0$}
First we consider $f$ that satisfies the equation
\begin{align}
-\frac{1}{2}\eta f'(\eta)=[\phi'(f(\eta))f'(\eta)]',\quad  \eta<a.
\label{k3}
\end{align}
At the boundaries we require
\begin{align}
&\lim_{\eta\rightarrow -\infty}f(\eta)=U_0,\label{kb22}\\
&\displaystyle\lim_{\eta\nearrow a}f(\eta)=0,\quad \displaystyle\lim_{\eta\nearrow a}\phi'(f(\eta))f'(\eta)=-\gamma,\label{kb11}
\end{align}
where $a$ and $\gamma>0$ are constant.

We can obtain various results when $a<0$ from the half-line problem by a change of variables. We define
\begin{align}
-g(-\eta):=f(\eta).
\label{cov}
\end{align}
 Denoting $\hat a=-a$ and $\hat\eta=-\eta$, we get
 \begin{align*}
 -\frac{1}{2}\hat\eta g'(\hat\eta)=[\phi'(-g(\hat\eta))g'(\hat\eta)]'.
 \end{align*}

 The following result is immediate from Lemma \ref{ld0}, by using the change of variables (\ref{cov}).
\begin{lemma}
If $f$ satisfies (\ref{k3}) and the boundary conditions (\ref{kb22}) and (\ref{kb11}), then we have the derivative of $f$ vanishes as $\eta\rightarrow -\infty$
\begin{align*}
\displaystyle\lim_{\eta\rightarrow -\infty}f'(\eta)=0.
\end{align*}
\label{ld1}
\end{lemma}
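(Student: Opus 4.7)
The plan is to reduce the claim to Lemma \ref{ld0} via the reflection (\ref{cov}). Setting $g(\hat\eta):=-f(-\hat\eta)$ and $\hat a:=-a$, one computes $g'(\hat\eta)=f'(-\hat\eta)$, so that proving $\lim_{\eta\to-\infty}f'(\eta)=0$ is equivalent to proving $\lim_{\hat\eta\to\infty}g'(\hat\eta)=0$.

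First I would verify, as already indicated in the paragraph introducing (\ref{cov}), that on $(\hat a,\infty)$ the function $g$ satisfies
\[
-\tfrac{1}{2}\hat\eta\,g'(\hat\eta)=\bigl[\phi'(-g(\hat\eta))g'(\hat\eta)\bigr]',
\]
together with the boundary data $g(\hat a)=0$, $\phi'(-g(\hat\eta))g'(\hat\eta)\to-\gamma$ as $\hat\eta\searrow\hat a$, and $g(\hat\eta)\to-U_0$ as $\hat\eta\to\infty$. Structurally this matches the negative-branch problem (\ref{k2})--(\ref{kkb1}) treated in Lemma \ref{ld0}, with $\varepsilon=1$ and $V_0$ replaced by $U_0$.

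Next I would rerun the argument of Lemma \ref{ld0} for $g$. Lemma \ref{mono1} gives $g'<0$, and inspecting the $g$-equation (as in the unnamed monotonicity lemma immediately preceding Lemma \ref{ld0}) shows $g''>0$ for $\hat\eta>\max\{0,\hat a\}$. Fixing any $\hat\eta_0>\max\{0,\hat a\}$ and integrating the $g$-equation from $\hat\eta_0$ to $\hat\eta$, the monotonicity of $g'$ together with the estimate $s\,g'(s)\le s\,g'(\hat\eta)$ for $s\in[\hat\eta_0,\hat\eta]$ yields
\[
-\tfrac{1}{4}g'(\hat\eta)(\hat\eta^2-\hat\eta_0^2)\le -\phi'(-g(\hat\eta_0))g'(\hat\eta_0),
\]
so that $-g'(\hat\eta)\le C/(\hat\eta^2-\hat\eta_0^2)$ for a positive constant $C$ determined by the data at $\hat\eta_0$. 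Letting $\hat\eta\to\infty$ gives $g'(\hat\eta)\to 0$, and translating via $g'(\hat\eta)=f'(-\hat\eta)$ completes the argument.

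The only subtle point is the case $\hat a<0$ (equivalently $a>0$), in which one cannot simply quote Lemma \ref{ld0} with the integration starting from $\hat a$: the integrand $-s\,g'(s)$ changes sign at $s=0$, so the clean bound $4\gamma/(\hat\eta^2-\hat a^2)$ no longer follows directly. The remedy sketched above --- starting the integration from some positive $\hat\eta_0$ instead of from $\hat a$ --- sidesteps this difficulty, at the price of a constant depending on $\hat\eta_0$ rather than on $\gamma$ alone; the asymptotic conclusion $g'\to 0$, and hence $f'(\eta)\to 0$ as $\eta\to-\infty$, is unaffected.
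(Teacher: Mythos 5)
Your proposal is correct and follows the paper's route exactly: the paper proves this lemma by the single remark that it is ``immediate from Lemma \ref{ld0} by the change of variables (\ref{cov})''. Your additional observation --- that when $a>0$ the reflected point $\hat a=-a$ is negative, so the integrand $-s\,g'(s)$ changes sign and Lemma \ref{ld0} cannot be quoted verbatim, but restarting the integration from some $\hat\eta_0>\max\{0,\hat a\}$ repairs the bound at the cost of a constant depending on the data at $\hat\eta_0$ --- is a genuine and correctly executed refinement of a detail the paper glosses over.
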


From the previous results in half-line case, we know immediately that: the solution $f$ exists is unique locally in a left-neighbourhood $(a-\delta,a)$ of $a$ when $a>0$, and it is monotonically decreasing. By the change of variables (\ref{cov}), we know from Lemma \ref{lkb2} and \ref{lkb3} that solution $f$ is unique locally in a left-neighbourhood $(a-\delta,a)$ of $a$ when $a<0$.

We therefore have the following lemma, for which it remains to prove the local existence and uniqueness of the solution $f$ when $a=0$.
\begin{lemma}
For given $a\in\mathbb{R}$ and $\gamma>0$, there exists $\delta>0$ such that in $(a-\delta,a)$ equation (\ref{k2}) has a unique solution which is positive and satisfies the boundary condition (\ref{kb11}).
\label{lkb11}
\end{lemma}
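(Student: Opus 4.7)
The plan is as follows. When $a \neq 0$, the result reduces to what has already been proved. For $a > 0$, equation (\ref{k3}) with boundary conditions (\ref{kb11}) coincides with (\ref{k}), (\ref{kb1}) in a left-neighbourhood of $\eta = a$, so Lemma \ref{lkb1} applies directly. For $a < 0$, the change of variables $g(\hat{\eta}) := -f(-\hat{\eta})$ with $\hat{a} := -a > 0$ transforms (\ref{k3}) into an equation of the form (\ref{k2}) for $g$ in a right-neighbourhood of $\hat{a}$, with boundary conditions matching (\ref{kkb1}) (with $\varepsilon = 1$); existence and uniqueness then follow from Lemmas \ref{lkb2} and \ref{lkb3}.

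The new content is the case $a = 0$, where the rescaling $\tau = 1 - \sigma/a$ used in Lemma \ref{lkb1} is not available. My plan is to work directly with the inverse function $\eta = \sigma(f)$, which must satisfy $\sigma(0) = 0$ and $\sigma(f) < 0$ for $f > 0$. Integrating (\ref{k3}) from $\eta$ to $0$ and changing variable via $s = \sigma(\theta)$ in the resulting integral term yields the integro-differential equation
\begin{align*}
\sigma(f) = -\int_0^f \frac{2\phi'(\theta)}{2\gamma - \int_\theta^0 \sigma(s)\,{\rm d}s}\,{\rm d}\theta, \qquad f \in [0, \mu],
\end{align*}
for some $\mu > 0$ to be chosen; a continuous solution of this equation will provide, by inversion, the required local positive solution of (\ref{k3}).

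I would then mimic the contraction-mapping argument of Lemma \ref{lkb1}. On the complete metric space $X$ of continuous functions $\sigma:[0,\mu]\to [-K,0]$ equipped with the supremum norm, where $K > 0$ is fixed first and $\mu$ is then taken small, let $M(\sigma)(f)$ denote the right-hand side of the displayed equation above. For $\sigma \in X$ one has $0 \le \int_\theta^0 \sigma(s)\,{\rm d}s = -\int_0^\theta \sigma(s)\,{\rm d}s \le K\mu$, so choosing $\mu$ with $K\mu \le \gamma$ keeps the denominator in $[\gamma, 2\gamma]$. The estimates
\begin{align*}
|M(\sigma)(f)| \le \frac{2\phi(\mu)}{\gamma}, \qquad \Vert M(\sigma_1)-M(\sigma_2)\Vert \le \frac{2\mu\phi(\mu)}{\gamma^2}\Vert\sigma_1 - \sigma_2\Vert,
\end{align*}
become arbitrarily small as $\mu\to 0$ because $\phi(0) = \phi'(0) = 0$ by (\ref{phi}). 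Hence, for $\mu$ sufficiently small, $M$ maps $X$ into itself and is a contraction, so the Banach fixed-point theorem supplies a unique $\sigma$, and consequently a unique local positive $f = \sigma^{-1}$.

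The main obstacle I anticipate is not the fixed-point construction itself but the verification of the free-boundary derivative condition $\lim_{\eta \nearrow 0}\phi'(f(\eta))f'(\eta) = -\gamma$. Differentiating the integro-differential equation gives $\sigma'(f) = -2\phi'(f)/[2\gamma - \int_f^0 \sigma(s)\,{\rm d}s]$, so $\phi'(f) f'(\eta) = \phi'(f)/\sigma'(f) = -\gamma + \frac{1}{2}\int_f^0 \sigma(s)\,{\rm d}s$; letting $f \to 0^+$ (equivalently $\eta \to 0^-$) makes the integral vanish and delivers the required limit. Positivity of $f$ on $(-\delta,0)$ and the monotonicity of $\sigma$ follow from $\sigma'(f) < 0$, which is built into the sign of the right-hand side of the integral equation.
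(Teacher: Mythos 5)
Your proposal is correct and follows essentially the same route as the paper: the cases $a>0$ and $a<0$ are reduced to Lemma \ref{lkb1} and to Lemmas \ref{lkb2}--\ref{lkb3} via the reflection $g(\hat\eta)=-f(-\hat\eta)$ (the paper's change of variables (\ref{cov})), and the case $a=0$ is handled by a contraction mapping for the inverse function $\eta=\sigma(f)$ with the denominator $2\gamma+\int_0^\theta\sigma$ kept bounded below by $\gamma$. The only cosmetic difference is that the paper normalises to $\tau=-\sigma\in[0,\tfrac{1}{2}]$ rather than working with $\sigma\in[-K,0]$ directly.
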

\begin{proof}The proof for $a>0$ is similar to the proof of Lemma \ref{lkb1} and \ref{lbw}. If $a\le0$, by using the similar approach to that of Lemma \ref{lkb1}, writing $\eta=\sigma(f)$, then
\begin{align}
\sigma(f)=-2\int_0^f\frac{\phi'(\theta)}{\int_0^\theta \sigma(s){\rm d}s+2\gamma}{\rm d}\theta,\label{wls2}
\end{align}
and if we set
\begin{align*}
\t(f)=-\sigma(f)=-\eta,
\end{align*}
then (\ref{wls2}) becomes
\begin{align}
\t(f)=2\int_0^f\frac{\phi'(\theta)}{-\int_0^\theta \t(s){\rm d}s+2\gamma}{\rm d}\theta.
\end{align}
Now we denote by $X$ the set of continuous functions $\t(f)$ on $[0,\mu]$, satisfying $0\le\t(f)\le\frac{1}{2}$, and $\Vert\cdot\Vert$ the supremum norm on $X$. Then $X$ is a complete metric space. Choose $\mu$ small enough that $\mu<2\gamma$, on $X$ we introduce the map
\begin{align*}
M(\t)(f)=2\int_0^f\frac{\phi'(\theta)}{-\int_0^\theta \t(s){\rm d}s+2\gamma}{\rm d}\theta\le 2\int_0^\mu\frac{\phi'(\theta)}{\gamma}{\rm d}\theta.
\end{align*}
It is clear that $M(\t)(f)$ is well-defined, non-negative and continuous. Moreover, $M(\t)(f)\le\frac{1}{2}$ if
\begin{align}
\int_0^\mu\frac{\phi'(\theta)}{\gamma}{\rm d}\theta\le\frac{1}{4}. \label{wls3}
\end{align}
Therefore, if $\mu$ is chosen small enough that (\ref{wls3}) is satisfied, $M$ maps $X$ into itself.

We wish to ensure that $M$ is a contraction map, so let $\t_1,\t_2\in X$, we have for chosen $\mu<2\gamma$

\begin{align*}
\Vert M(\t_1)-M(\t_2)\Vert \le&4\int_0^\mu\frac{\phi'(\theta)}{\gamma}{\rm d}\theta\Vert \t_1-\t_2\Vert,
\end{align*}
and it follows that $M$ is a contraction map if
\begin{align*}
4\int_0^\mu\frac{\phi'(\theta)}{\gamma}{\rm d}\theta< 1.
\end{align*}
This constitutes our third restriction on $\mu$, which implies the first one (\ref{wls3}). The result follows from a contraction mapping principle \cite{line}. \end{proof}

We know that if $a>0$, the local solution in Lemma \ref{lkb11} can be continued back to $\eta=0$ by Lemma \ref{lbw} in the half-line case. It can be shown that $f'(0)<C$ \cite{thesis}, which ensures $f$ can be continued back a little bit from $0$ by Picard's theorem and the fact that $f(0)>0$. In order to show that the unique local solution in Lemma \ref{lkb11} can be continued back to $\eta=-\infty$ by the Global Picard Theorem, we will prove some estimates for $-f'$ and $f$.

The following lemma proves the boundedness for $f$ in three cases: $a>0$, $a=0$ and $a<0$.

\begin{lemma}
If $f$ satisfies (\ref{k3}) and the boundary conditions (\ref{kb22}) and (\ref{kb11}), then we have for fixed $a,\gamma$, there exists $K>0$ such that $0<f(\eta)<K$ for all $\eta<a$.
\label{lfb}
\end{lemma}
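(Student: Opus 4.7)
The plan is to take $K = U_0$ and obtain the bound by an elementary monotonicity argument. The three sub-cases $a > 0$, $a = 0$, $a < 0$ announced immediately before the statement do not in fact need separate treatment for the bound on $f$ itself; they become relevant only when deriving the companion estimates on $-f'$ used to invoke the Global Picard Theorem.

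The first step is to verify that $f$ is strictly decreasing on $(-\infty, a)$, following the template of Lemma \ref{mono} (equivalently Lemma \ref{mono1}). Suppose for contradiction that $f'(\eta_0) = 0$ at some $\eta_0 < a$. Rewriting (\ref{k3}) as the first-order system
\begin{align*}
u' = \frac{v}{\phi'(u)}, \qquad v' = -\frac{\eta\, v}{2\phi'(u)},
\end{align*}
in $(u, v) = (f, \phi'(f)f')$, we note that because $\phi \in C^2$ with $\phi'(s) > 0$ for $s > 0$ by (\ref{phi}), this system is locally Lipschitz on $\{u > 0\}$. At $\eta_0$ we have $u(\eta_0) = f(\eta_0) > 0$ and $v(\eta_0) = 0$, and the constant pair $(f(\eta_0), 0)$ is also a solution. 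Picard uniqueness then forces $f \equiv f(\eta_0)$ locally; as long as the continuation stays in $\{u > 0\}$ (which it does, being constant), the identification extends across the whole of $(-\infty, a)$. This contradicts one of the limits imposed by (\ref{kb11}) or (\ref{kb22}), since $0$ and $U_0$ cannot both equal the constant $f(\eta_0)$. Hence $f'(\eta) < 0$ throughout $(-\infty, a)$.

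Once strict monotonicity is in hand, the two-sided bound is immediate. Strict decrease together with $\lim_{\eta\nearrow a} f(\eta) = 0$ from (\ref{kb11}) gives $f(\eta) > 0$ on $(-\infty, a)$, and strict decrease together with $\lim_{\eta\to-\infty} f(\eta) = U_0$ from (\ref{kb22}) gives $f(\eta) < U_0$ on $(-\infty, a)$ (a strictly monotone function does not attain its supremum, only approaches it in the limit). Combining, $0 < f(\eta) < U_0$ on $(-\infty, a)$, so one may take $K := U_0$ uniformly in the sign of $a$. The only delicate point in the whole argument is confirming that (\ref{k3}) can be placed in a Picard-applicable form away from the free boundary; this is essentially free here, because the only singularity of the system above occurs at $u = 0$, which corresponds to $f = 0$ and hence only to the endpoint $\eta = a$ itself. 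I do not expect this to be a real obstacle.
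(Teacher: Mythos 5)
Your argument is formally valid for the statement as literally printed, but the key step --- taking $K=U_0$ from the hypothesis $\lim_{\eta\to-\infty}f(\eta)=U_0$ in (\ref{kb22}) --- is precisely the step the paper cannot afford, and it is the step the paper's own proof is designed to avoid. Lemma \ref{lfb} is introduced, together with Lemma \ref{lfp}, in order to prove Lemma \ref{lbw2}: that the unique local solution of Lemma \ref{lkb11}, defined a priori only on some $(a-\delta,a)$, can be continued back to $\eta=-\infty$ at all. At that stage one does not know that $f$ is globally defined, nor that it has a limit at $-\infty$, and certainly not that the limit equals $U_0$; the quantity $b(a,\gamma)=\lim_{\eta\to-\infty}f(\eta;a,\gamma)$ is only matched to $U_0$ later by the shooting argument of Theorem \ref{tj2}. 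So an upper bound obtained by invoking (\ref{kb22}) renders the continuation argument circular. Your lower bound $f>0$ and your monotonicity argument (which mirrors Lemma \ref{mono1}) are fine and do not suffer from this defect; the problem is confined to the upper bound, which is the substantive half of the lemma.

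What is actually needed, and what the paper supplies, is an a priori bound depending only on $a$ and $\gamma$, derived from the equation (\ref{k3}) and the free-boundary data at $\eta=a$ on whatever interval the solution currently exists. For $a>0$ one integrates to get $\int_0^{f(0)}\phi'(s)\big(\tfrac{2\gamma}{a}+s\big)^{-1}{\rm d}s\le\tfrac{1}{2}a^2$, which bounds $f(0)$ by a constant depending on $a,\gamma$ because of the divergence hypothesis $\int_1^\infty\phi'(f)/f\,{\rm d}f=\infty$ in (\ref{phi1}); one then propagates this bound to $[-2\rho,0)$ by integrating the equation twice, and to $\eta<-\rho$ via the inequality $\tfrac{\rho}{2}f(\eta)\le\tfrac{\rho}{2}f(\eta)-\phi'(f(\eta))f'(\eta)\le\tfrac{\rho}{2}f(-\rho)-\phi'(f(-\rho))f'(-\rho)$. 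The cases $a=0$ and $a<0$ require different integration endpoints (integrating up to $a$ rather than up to $0$), so the case split you dismiss as unnecessary is in fact needed for the bound on $f$ itself, not only for the companion estimate on $-f'$. To repair your proof you would need to replace the appeal to (\ref{kb22}) by an estimate of this kind.
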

\begin{proof}
\noindent{\bf Case 1. } For $a>0$, first we consider $\eta\in[0,a)$, from (\ref{k3}) we know that
\begin{align}
\frac{-\phi'(f(\eta))f'(\eta)}{\frac{2\gamma}{a}+f(\eta)}\le\frac{a}{2},
\label{fa2}
\end{align}
integrating (\ref{fa2}) from $0$ to $\eta$ gives
\begin{align*}
\int^{f(0)}_0\frac{\phi'(s)}{\frac{2\gamma}{a}+f(s)}{\rm d}s\le\frac{a^2}{2},
\end{align*}
then $f(\eta)\le f(0)$ is bounded for $0\le\eta<a$, since $f$ is monotonic decreasing in $\eta$.

Next we consider $\eta\in[-2\rho,0)$ for some positive $\rho$.
Integrating (\ref{k3}) from $\eta$ to $0$ we have
\begin{align}
\phi'(f(0))f'(0)-\phi'(f(\eta))f'(\eta)\le 0,
\label{fa3}
\end{align}
then integrating (\ref{fa3}) yields
\begin{align*}
\phi(f(\eta))\le \phi(f(0))+2\rho\phi'(f(0))f'(0).
\end{align*}

Then for $\eta<-\rho$, integrating (\ref{k3}) from $\eta$ to $-\rho$ we get
\begin{align*}
\frac{\rho}{2}f(\eta)-\phi'(f(\eta))f'(\eta)\le \frac{\rho}{2}f(-\rho)-\phi'(f(-\rho))f'(-\rho),
\end{align*}
since $f'<0$. Therefore $f(\eta)\le K$ for fixed $a,\gamma$ and all $\eta<a$.

\noindent{\bf Case 2. }For $a=0$, the same proof can be used as in $a>0$ case, with $\phi(f(\eta))\le 2\rho\gamma$ for $\eta\in[-2\rho,a)$.

\noindent{\bf Case 3. }For $a<0$, if $\eta\in[-2\rho, a)$, integrating (\ref{k3}) from $\eta$ to $a$ we have
\begin{align}
-(\phi(f(\eta)))'<\gamma,
\label{fa4}
\end{align}
then integrating (\ref{fa4}) from $\eta$ to $a$ gives
\begin{align*}
\phi(f(\eta))\le(a+2\rho)\gamma.
\end{align*}
For $\eta<-\rho$ the proof is the same as $a>0$.\end{proof}

Next, we prove the boundedness for $-f'$ in three cases: $a>0$, $a=0$ and $a<0$.
\begin{lemma}
If $f$ satisfies (\ref{k3}) and the boundary conditions (\ref{kb22}) (\ref{kb11}), then for fixed $a,\gamma$, there exists $\hat K$ such that $0<-\phi'(f(\eta))f'(\eta)<\hat K$ for all $\eta<a$.
\label{lfp}
\end{lemma}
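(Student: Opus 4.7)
The plan is to study the quantity $P(\eta) := -\phi'(f(\eta))f'(\eta)$ directly as a function of $\eta$, exploiting the fact that the ODE (\ref{k3}) can be rewritten as a first-order equation for $P$. Differentiating the definition of $P$ and using (\ref{k3}), one obtains
\begin{align*}
P'(\eta) = -[\phi'(f(\eta))f'(\eta)]' = \frac{1}{2}\eta f'(\eta), \quad \eta<a.
\end{align*}
Since $f'(\eta)<0$ for $\eta\neq a$ by Lemma \ref{mono1}, the sign of $P'$ is determined entirely by the sign of $\eta$: $P'(\eta)>0$ for $\eta<0$, $P'(0)=0$, and $P'(\eta)<0$ for $\eta>0$. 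Moreover, the boundary condition (\ref{kb11}) gives $\lim_{\eta\nearrow a}P(\eta)=\gamma>0$, and positivity of $P$ on $(-\infty,a)$ follows from $f>0$ there together with (\ref{phi}) and Lemma \ref{mono1}.

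I would then split into the three cases reflecting the sign of $a$, exactly parallel to the structure of Lemma \ref{lfb}. If $a\le 0$, then $\eta<a\le 0$ throughout, so $P'\ge 0$ and $P$ is non-decreasing on $(-\infty,a)$. Therefore $P(\eta)\le \lim_{\eta\nearrow a}P(\eta)=\gamma$, and one may take $\hat K=\gamma$. If $a>0$, then $P$ increases on $(-\infty,0]$ and decreases on $[0,a)$, so its maximum on $(-\infty,a)$ is attained at $\eta=0$. It therefore suffices to bound $P(0)$. Integrating the relation $P'(\eta)=\tfrac{1}{2}\eta f'(\eta)$ from $0$ to $a$, applying integration by parts on the right, and using $f(a)=0$, gives
\begin{align*}
\gamma-P(0)=\int_0^a\frac{1}{2}\eta f'(\eta)\,{\rm d}\eta=-\frac{1}{2}\int_0^a f(\eta)\,{\rm d}\eta,
\end{align*}
so that $P(0)=\gamma+\tfrac{1}{2}\int_0^a f(\eta)\,{\rm d}\eta$. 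The previous lemma (Lemma \ref{lfb}) provides the uniform bound $f\le K$, and hence $P(0)\le \gamma+\tfrac{aK}{2}$. Setting $\hat K:=\gamma+\tfrac{aK}{2}$ then yields $0<P(\eta)<\hat K$ on $(-\infty,a)$.

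I do not expect any real obstacle beyond bookkeeping: the argument reduces to monotonicity of $P$ together with a single integration by parts. The only subtle point is making sure the boundary values $P(a^-)=\gamma$ and $\lim_{\eta\to -\infty}P(\eta)=0$ (the latter from $f\to U_0$ and Lemma \ref{ld1}) are consistent with the monotonicity picture, and that the case $a=0$ is correctly absorbed into the $a\le 0$ branch (where $P$ simply grows from $0$ at $-\infty$ to $\gamma$ at $a^-$). Finally, the positivity statement in the lemma follows trivially, since for $\eta<a$ one has $f(\eta)>0$, hence $\phi'(f(\eta))>0$ by (\ref{phi}), while $f'(\eta)<0$ by Lemma \ref{mono1}, so $P(\eta)>0$.
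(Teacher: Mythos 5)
Your proof is correct and follows essentially the same route as the paper's: both rest on integrating (\ref{k3}) (equivalently, on the identity $\bigl(-\phi'(f)f'\bigr)'=\tfrac{1}{2}\eta f'$), on the resulting monotonicity of $-\phi'(f)f'$ for $\eta<0$, and on the bound $f<K$ from Lemma \ref{lfb}. Your version is marginally tidier in locating the maximum exactly at $\eta=0$ (when $a>0$) and evaluating it there by integration by parts, whereas the paper splits at an arbitrary point $-2\rho$, but the ingredients are identical.
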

\noindent{\bf Proof. }
For all $a\in\mathbb{R}$, first consider $\eta\in[-2\rho,a)$ for some $\rho>0$. Integrating (\ref{k3}) from $\eta$ to $a$ we have that $-\phi'(f(\eta))f'(\eta)\le\gamma+K(a+\rho)$, where $K$ is positive constant that $f(\eta)\le K$, by Lemma \ref{lfb}. Then for $\eta\le-2\rho$, we know that $-\phi'(f(\eta))f'(\eta)\le-\phi'(f(-2\rho))f'(-2\rho)\le \hat K$ for fixed $a,\gamma$.\hfill $\B$ \\

The following lemma is a consequence of Lemma \ref{lfb}, Lemma \ref{lfp} together with \cite[Theorem 1.186]{odea}.

\begin{lemma}
For given $a,\gamma$, the unique local solution in Lemma \ref{lkb11} can be continued back to $\eta=-\infty$.
\label{lbw2}
\end{lemma}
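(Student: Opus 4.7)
The plan is to recast (\ref{k3}) as a two-dimensional first-order system and then invoke the standard ODE continuation theorem \cite[Theorem 1.186]{odea}, with Lemmas \ref{lfb} and \ref{lfp} supplying the uniform \textit{a priori} bounds that rule out blow-up in finite $\eta$.

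Concretely, I would set $y_1=f$ and $y_2=\phi'(f)f'$, so that (\ref{k3}) is equivalent to the system
\begin{align*}
y_1'=\frac{y_2}{\phi'(y_1)},\qquad y_2'=-\frac{\eta}{2}\cdot\frac{y_2}{\phi'(y_1)},
\end{align*}
whose right-hand side is continuous in $\eta$ and locally Lipschitz in $(y_1,y_2)$ on the open set $\{y_1>0\}\times\mathbb{R}$ (since $\phi\in C^2$ with $\phi'>0$ on $(0,\infty)$).

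Next I would let $(a_*,a)$ be the maximal interval of backward continuation of the local solution from Lemma \ref{lkb11}, and aim to show $a_*=-\infty$ by contradiction: suppose $a_*>-\infty$. Fix any $\eta_0\in(a_*,a)$. By monotonicity (Lemma \ref{mono1}), $f(\eta)\ge f(\eta_0)>0$ for all $\eta\in(a_*,\eta_0]$, so $y_1$ stays bounded away from $0$ on $(a_*,\eta_0]$; combined with the upper bound $f<K$ from Lemma \ref{lfb}, this confines $y_1$ to a fixed compact sub-interval of $(0,\infty)$. Lemma \ref{lfp} simultaneously gives $|y_2|<\hat K$. Hence $(y_1(\eta),y_2(\eta))$ remains in a fixed compact subset of the Lipschitz domain as $\eta\searrow a_*$.

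Because $a_*$ is finite and the trajectory does not escape any compact subset of the Lipschitz domain, \cite[Theorem 1.186]{odea} permits extension of the solution across $\eta=a_*$, contradicting the maximality of $(a_*,a)$. Therefore $a_*=-\infty$. I do not expect any genuine obstacle in this step; the substantive work has already been carried out in Lemmas \ref{lfb} and \ref{lfp}, leaving only a routine invocation of the continuation theorem.
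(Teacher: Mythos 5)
Your proposal is correct and follows essentially the same route as the paper, which deduces the lemma directly from Lemma \ref{lfb}, Lemma \ref{lfp} and the continuation theorem \cite[Theorem 1.186]{odea}; you have merely filled in the routine details (the first-order reformulation, and the observation that monotonicity keeps $f$ away from the degeneracy of $\phi'$ at $0$ on any backward interval). No gap.
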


~\\
Now define
\begin{align*}
b(a,\gamma):=\displaystyle\lim_{\eta\rightarrow-\infty}f(\eta;a,\gamma),
\end{align*}
where $\gamma:=-\displaystyle\lim_{\eta\nearrow a}\phi'(f(\eta))f'(\eta)$ with $\gamma>0$.
Note that we use the same notation $b(a,\gamma)$ as in the half-line case, but here $b(a,\gamma)$ define as the function of $f(\eta;a,\gamma)$ as $\eta\rightarrow-\infty$ rather than $\eta\rightarrow0$.

We can obtain from Corollary \ref{ccf}, Lemma \ref{lfc} and the change of variables (\ref{cov}) that $f$ is a continuous function of $a$ and $\gamma$.
\begin{lemma}\label{lfc1}
For each fixed $\eta^*<a$, if $f$ satisfies (\ref{k3}) and (\ref{kb11}), then
\begin{itemize}
\item[{\rm (i)}] $f(\eta^*;a,\gamma)$ is a continuous function of $\gamma$ for fixed $a$;
\item[{\rm (ii)}] $f(\eta^*;a,\gamma)$ is a continuous function of $a$ for fixed $\gamma$.
\end{itemize}
\end{lemma}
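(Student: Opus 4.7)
The strategy is a case analysis on the sign of $a$, using the change of variables (\ref{cov}) to reduce the region $\{\eta < a\}$ to a domain already covered by the half-line results, and invoking standard ODE continuous dependence to bridge between reference points where continuity has been inherited from those results.

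For the case $a \le 0$, the entire interval $\{\eta < a\}$ lies on the negative real axis. Setting $g(\hat\eta) := -f(-\hat\eta)$ and $\hat a := -a \ge 0$ as in (\ref{cov}), a short computation gives $-\tfrac{1}{2}\hat\eta\, g'(\hat\eta) = [\phi'(-g(\hat\eta))g'(\hat\eta)]'$ on $(\hat a,\infty)$, with $g<0$ there, $g(\hat a)=0$, $\lim_{\hat\eta\to\infty} g(\hat\eta) = -U_0$, and $\lim_{\hat\eta\searrow\hat a}\phi'(-g)g' = -\gamma$. This is exactly problem (\ref{k2})--(\ref{kkb1}) with $\varepsilon = 1$ and $V_0$ replaced by $U_0$, so Lemma \ref{lfc} applied at $\hat\eta^* := -\eta^* > \hat a$ yields continuity of $g(\hat\eta^*;\hat a,\gamma) = -f(\eta^*;a,\gamma)$ in $\hat a$ for fixed $\gamma$ and in $\gamma$ for fixed $\hat a$. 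Reversing the change of variables gives the claim for $a \le 0$.

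For the case $a > 0$, split on the location of $\eta^*$. When $\eta^* \in (0,a)$ the statement is exactly Corollary \ref{ccf}, whose proof uses only the local existence near $\eta = a$ (Lemma \ref{lkb1}) and the backward continuation (Lemma \ref{lbw}), neither of which invokes the half-line boundary condition $f(0) = U_0$, so it applies verbatim in the whole-line setting. When $\eta^* = 0$, continuity follows from Lemma \ref{lc1}. When $\eta^* < 0$, I take $(f(0;a,\gamma), f'(0;a,\gamma))$ as ``initial data'' at $\eta = 0$ and propagate (\ref{k3}) backward on $[\eta^*, 0]$. Continuity of $f(0;a,\gamma)$ in $(a,\gamma)$ is Lemma \ref{lc1}, while continuity of $f'(0;a,\gamma)$ is obtained from the identity
\begin{equation*}
\phi'(f(0;a,\gamma))\,f'(0;a,\gamma) = -\gamma - \tfrac{1}{2}\int_0^a f(s;a,\gamma)\,ds,
\end{equation*}
derived by integrating (\ref{k3}) from $0$ to $a$ and using the boundary condition $\lim_{\eta\nearrow a}\phi'(f)f' = -\gamma$; continuity of the integral in $(a,\gamma)$ follows from Corollary \ref{ccf}, and $f(0;a,\gamma) > 0$ makes $\phi'(f(0;a,\gamma)) > 0$, so we may solve for $f'(0;a,\gamma)$. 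Recast as a first-order system in $(f,\phi'(f)f')$, equation (\ref{k3}) on $[\eta^*,0]$ has a locally Lipschitz right-hand side wherever $f$ is bounded away from $0$, and the uniform bounds of Lemmas \ref{lfb}--\ref{lfp} give such control. Standard continuous dependence on initial data (e.g.\ Gronwall on $[\eta^*,0]$) then yields continuity of $f(\eta^*;a,\gamma)$ in each of $a$ and $\gamma$.

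The main technical point is the last step: ensuring that the ODE admits uniform Lipschitz control on $[\eta^*,0]$ as $(a,\gamma)$ ranges in a small neighbourhood of a fixed $(a_0,\gamma_0)$, so that standard continuous dependence applies. This requires a uniform positive lower bound on $f$ and an upper bound on $-\phi'(f)f'$ on this bounded interval, both supplied by Lemmas \ref{lfb} and \ref{lfp} combined with the already established continuity at the endpoint $\eta = 0$. No passage to $\eta = -\infty$ is needed since $\eta^*$ is finite, so no additional difficulty arises beyond a standard application of the Picard/Gronwall machinery.
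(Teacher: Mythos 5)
Your proposal is correct, and it follows the paper's skeleton only in part: the paper disposes of this lemma in a single sentence, citing Corollary \ref{ccf}, Lemma \ref{lfc} and the change of variables (\ref{cov}), whereas you make explicit how the three regimes are actually covered. For $a\le 0$ your reduction via $g(\hat\eta)=-f(-\hat\eta)$ to problem (\ref{k2})--(\ref{kkb1}) with $\varepsilon=1$ and $V_0$ replaced by $U_0$ is exactly what the paper intends, and for $a>0$ with $\eta^*\in(0,a)$ you correctly observe that the whole-line solution coincides with the half-line one on $(0,a)$, so Corollary \ref{ccf} applies verbatim. The genuine added value is your treatment of $a>0$ with $\eta^*\le 0$: the paper's integral-equation machinery (the map $\tau(f)=1-\eta/a$ and the function $L$ in Lemmas \ref{lbg}--\ref{lba}) is built for $\eta\in[0,a]$ and does not obviously extend past $\eta=0$, and the change of variables sends this regime to a transformed problem with a negative free boundary, outside the scope of the half-line lemmas; your alternative of treating $(f(0;a,\gamma),f'(0;a,\gamma))$ as initial data --- with continuity of $f'(0;a,\gamma)$ extracted from the integrated identity $\phi'(f(0))f'(0)=-\gamma-\tfrac12\int_0^a f$ and then Picard/Gronwall on the compact interval $[\eta^*,0]$ where $f$ is bounded away from $0$ --- is a clean and correct way to close this gap, arguably more robust than what the paper gestures at. Two small caveats: the case $a=0$ falls through both treatments as written, since Lemma \ref{lfc} (via Lemma \ref{lkb2}) is stated for a strictly positive free boundary, so $\hat a=-a=0$ needs either the separate local theory of Lemma \ref{lkb11} or a limiting argument; and the uniform lower and upper bounds on $f$ over $[\eta^*,0]$ that your Gronwall step needs should be quoted as locally uniform in $(a,\gamma)$, which follows from the monotonicity in Corollary \ref{ccf} and continuity at $\eta=0$ rather than directly from the fixed-$(a,\gamma)$ statements of Lemmas \ref{lfb}--\ref{lfp}. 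Neither point affects the substance of your argument.
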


The following corollary follows directly from Lemma \ref{lfc1} and Corollary \ref{cf} as $\eta\rightarrow-\infty$.
\begin{corollary}
If $f$ satisfies (\ref{k3}) and (\ref{kb11}), then
\begin{itemize}
\item[{\rm (i)}] $b(a,\gamma)$ is a continuous function of $\gamma$ for fixed $a$;
\item[{\rm (ii)}] $b(a,\gamma)$ is a continuous function of $a$ for fixed $\gamma$.
\end{itemize}
\label{cfc1}
\end{corollary}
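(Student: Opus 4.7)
The plan is an $\varepsilon/3$ argument that combines the pointwise-in-$\eta$ continuity from Lemma \ref{lfc1} with a uniform tail estimate on the rate at which $f(\eta;a,\gamma)$ approaches $b(a,\gamma)$ as $\eta\to-\infty$. Fix $(a_0,\gamma_0)\in\mathbb{R}\times(0,\infty)$ and $\varepsilon>0$, and restrict attention to a closed rectangular neighbourhood $N=[a_0-r,a_0+r]\times[\gamma_0-r,\gamma_0+r]$ with $\gamma_0-r>0$.

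For any $\eta^*<a_0-r$ and $(a,\gamma)\in N$, I would use the triangle-inequality decomposition
\begin{align*}
|b(a,\gamma)-b(a_0,\gamma_0)| &\le |b(a,\gamma)-f(\eta^*;a,\gamma)|+|f(\eta^*;a,\gamma)-f(\eta^*;a_0,\gamma_0)|\\
&\quad +|f(\eta^*;a_0,\gamma_0)-b(a_0,\gamma_0)|.
\end{align*}
The middle term is $<\varepsilon/3$ for $(a,\gamma)$ sufficiently close to $(a_0,\gamma_0)$ by Lemma \ref{lfc1}. The third term is $<\varepsilon/3$ for $\eta^*$ sufficiently negative, directly from $b(a_0,\gamma_0)=\lim_{\eta\to-\infty}f(\eta;a_0,\gamma_0)$. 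Specialising the resulting conclusion to fixed $a=a_0$ then yields (i), and to fixed $\gamma=\gamma_0$ yields (ii).

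To handle the first term I would reprise the derivation in Lemma \ref{pfb} and Corollary \ref{cf}, with $U_0$ replaced by the generic limit $b(a,\gamma)$: since $f(\eta;a,\gamma)\le b(a,\gamma)$ for all $\eta$ in the domain by monotonicity (Lemma \ref{mono1}), multiplying the ODE (\ref{k3}) by $e^{\eta^{2}/(4\phi'(b(a,\gamma)))}$ and integrating over $(-\infty,\eta)$ gives an exponential tail estimate
\begin{equation*}
|b(a,\gamma)-f(\eta;a,\gamma)|\le C(a,\gamma)\int_{-\infty}^{\eta}e^{-s^{2}/(4\phi'(b(a,\gamma)))}\,ds,
\end{equation*}
where $C(a,\gamma)$ is proportional to the value of $-\phi'(f(\eta_{1}))f'(\eta_{1})$ at $\eta_{1}=\min\{a,0\}$ divided by $\phi'(b(a,\gamma)/2)$. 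Lemma \ref{lfb} gives a uniform upper bound for $b(a,\gamma)$ on $N$, and hence for $\phi'(b(a,\gamma))$ and for $1/\phi'(b(a,\gamma)/2)$; Lemma \ref{lfp} gives a uniform upper bound for $-\phi'(f)f'$ on $N$, hence for $C(a,\gamma)$. It follows that the first term can be driven below $\varepsilon/3$ by taking $\eta^*$ sufficiently negative, uniformly for $(a,\gamma)\in N$.

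The hard part will be precisely this uniformisation step, i.e.\ extracting constants in the tail estimate that are independent of $(a,\gamma)$ throughout $N$. Everything reduces to invoking Lemmas \ref{lfb} and \ref{lfp} on the compact parameter set $N$, together with the observation that the constant $G$ appearing in Lemma \ref{pfb} is either $\gamma$ (when $a\le 0$) or $-\phi'(f(0))f'(0)$ (when $a>0$), each of which is controlled uniformly on $N$ by Lemma \ref{lfp}.
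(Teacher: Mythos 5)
Your proposal is correct and follows essentially the same route as the paper, which simply combines the pointwise-in-$\eta$ parameter continuity of Lemma \ref{lfc1} with the exponential tail estimate of Lemma \ref{pfb}/Corollary \ref{cf} as $\eta\to-\infty$. The only difference is that you make explicit the uniformisation of the tail constants over a compact parameter neighbourhood (via Lemmas \ref{lfb} and \ref{lfp} and the monotonicity of $b$), a detail the paper leaves implicit in its one-line proof.
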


Now we consider $f$ satisfying the equation
\begin{align}
-\frac{1}{2}\eta f'(\eta)=[\varepsilon\phi'(-f(\eta))f'(\eta)]',\quad  \eta>a.
\label{k4}
\end{align}
At the boundaries we require
\begin{align}
&\lim_{\eta\rightarrow \infty}f(\eta)=V_0,\label{kb23}\\
&\displaystyle\lim_{\eta\searrow a}f(\eta)=0,\quad \displaystyle\lim_{\eta\searrow a}\varepsilon\phi'(-f(\eta))f'(\eta)=-\gamma,\label{kb12}
\end{align}
where $a$ and $\gamma>0$ are constants.

Following from what we studied on the positive solution, we can directly obtain the local existence, uniqueness results and continuity forward to $\eta=\infty$ of the solution $f$ by the change of variables (\ref{cov}). As for $\eta<a$, we know that $f$ is a monotonically decreasing function. Moreover, we know from Lemma \ref{ld0} directly that $\displaystyle\lim_{\eta\rightarrow\infty}f'(\eta)=0$. Similarly, if we define
\begin{align*}
d(a,\gamma):=\displaystyle\lim_{\eta\rightarrow \infty}f(\eta;a,\gamma).
\end{align*}

Next, we discuss the properties of $b(a,\gamma)$ and $d(a,\gamma)$. When we study the properties of $b(a,\gamma)=\displaystyle\lim_{\eta\rightarrow-\infty}f(\eta;a,\gamma)$, we can see the properties of $d_{\mathbb{R}^+}(a,\gamma)=\displaystyle\lim_{\eta\rightarrow\infty}f_{\mathbb{R}^+}(\eta;a,\gamma)$ in half-line case. The properties of $d(a,\gamma)$ are obtained immediately using the change of variables (\ref{cov}).
\begin{lemma}
The functions $b(a,\gamma)=\displaystyle\lim_{\eta\rightarrow-\infty}f(\eta;a,\gamma)$ and $d(a,\gamma)=\displaystyle\lim_{\eta\rightarrow\infty}f(\eta;a,\gamma)$ satisfy the analogous properties to those in Lemma \ref{lbg}, \ref{lba}, \ref{lda}, \ref{ldg}, \ref{lc1} and \ref{lcag}, where the property $\displaystyle\lim_{a\rightarrow0}b(a,\gamma)=0$ is replaced by $\displaystyle\lim_{a\rightarrow-\infty}b(a,\gamma)=0$.
\label{lba2}
\end{lemma}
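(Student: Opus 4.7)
The plan is to reduce every assertion about $b(a,\gamma)$ and $d(a,\gamma)$ to the half-line lemmas already established, by combining a careful inspection of the half-line proofs with the change of variables (\ref{cov}).

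For $d(a,\gamma)=\lim_{\eta\to\infty}f(\eta;a,\gamma)$: the governing equation (\ref{k4}) and boundary data (\ref{kb12}) coincide in form with (\ref{k2})--(\ref{kkb1}) from the half-line, so each argument in Lemmas \ref{ldg}, \ref{lda}, \ref{lfc}(i), and \ref{lcag} can be reused. The only places where those proofs invoked $a>0$ are a sign argument in the proof of $\gamma>0$ (which is now superseded by Lemma \ref{gam1}), the bound (\ref{fpr1}) on $-f'$ valid for $\eta\ge a+1$, and the derivation of $\lim_{a\to\infty}d=0$, whose conclusion concerns only large positive $a$ anyway. I would therefore copy each half-line proof, flagging each use of $a>0$ and verifying that it either remains valid for $a\le 0$ or lies outside the range of $a$ being considered, so that the analogous properties of $d$ hold in the whole-line setting.

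For $b(a,\gamma)=\lim_{\eta\to-\infty}f(\eta;a,\gamma)$: set $g(\hat\eta):=-f(-\hat\eta)$ with $\hat a:=-a$. A direct calculation shows that $g$ satisfies $-\tfrac12\hat\eta g'(\hat\eta)=[\phi'(-g(\hat\eta))g'(\hat\eta)]'$ on $\hat\eta>\hat a$ with $g<0$, $g(\hat a)=0$, $\phi'(-g(\hat a))g'(\hat a)=-\gamma$, and $\lim_{\hat\eta\to\infty}g(\hat\eta)=-U_0$. This is precisely the system for $d$ just treated (with $\varepsilon=1$, $V_0$ replaced by $U_0$, and $a$ replaced by $\hat a$), giving the identity $b(a,\gamma)=-\tilde d(\hat a,\gamma)$, where $\tilde d$ is the corresponding limit function for the transformed system. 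Every property of $b$ then translates, under this sign change and the substitution $a\leftrightarrow-\hat a$, into the corresponding property of $\tilde d$: strict monotonicity in $\gamma$ from Lemma \ref{ldg}(i), strict monotonicity in $a$ from Lemma \ref{lda}(i), the modified limit $\lim_{a\to-\infty}b=0$ from Lemma \ref{lda}(ii), $\lim_{\gamma\to\infty}b=\infty$ from Lemma \ref{ldg}(ii), and joint continuity from Lemma \ref{lcag}.

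The main obstacle is $\lim_{a\to\infty}b(a,\gamma)=\infty$ (the analogue of Lemma \ref{lba}(iv)), which under the change of variables becomes $\tilde d(\hat a,\gamma)\to-\infty$ as $\hat a\to-\infty$---a regime the half-line analysis did not cover. I would handle it directly on the whole line, by rerunning the argument of Lemma \ref{lba}(iv): integrating (\ref{k3}) over $\eta\in[0,a)$ and using $f'<0$ gives $-\phi'(f(\eta))f'(\eta)\ge\tfrac\eta2\,f(\eta)$, and a further integration yields $\int_0^{f(0)}\phi'(f)/f\,df\ge\tfrac14 a^2$. Since $f$ is monotonically decreasing, $b(a,\gamma)=\lim_{\eta\to-\infty}f(\eta)\ge f(0)$, so the same lower bound holds with $f(0)$ replaced by $b(a,\gamma)$, and condition (\ref{phi1}) then forces $b(a,\gamma)\to\infty$ as $a\to\infty$.
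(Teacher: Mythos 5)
Your proposal is correct and follows essentially the same route as the paper: reduce everything to the half-line lemmas via the change of variables (\ref{cov}), and verify directly the one genuinely new limit property. The paper's own proof consists only of the computation $-\phi'(f(\eta))f'(\eta)\le\frac{a}{2}\bigl(\frac{2\gamma}{a}+f(\eta)\bigr)$ for $a<0$, forcing $f\le -2\gamma/a$ and hence $\lim_{a\rightarrow-\infty}b(a,\gamma)=0$, which is exactly the inequality you obtain from Lemma \ref{lda}(ii) after the transformation; your additional explicit check of $\lim_{a\rightarrow\infty}b(a,\gamma)=\infty$ (using $b(a,\gamma)\ge f(0)$ and the half-line argument on $(0,a)$) is a sound and slightly more careful treatment of a point the paper leaves implicit.
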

\begin{proof}
Consider $a<0$, integrating (\ref{k3}) from $\eta$ to $a$ we get
\begin{align*}
-\phi'(f(\eta))f'(\eta)\le\gamma-\frac{a}{2}\int_\eta^af'(s){\rm d}s=\frac{a}{2}\left(\frac{2\gamma}{a}+f(\eta)\right),
\end{align*}
then the result follows from the fact that $f'<0$, gives $\frac{2\gamma}{a}+f<0$.
\end{proof}

Similarly to the half-line case, a two-parameter shooting method can be used to prove the existence of a self-similar solution of problem (\ref{j2}).

\begin{theorem}
Suppose $\varepsilon>0$, then there exists a unique solution $f$ of problem (\ref{j2}).\label{tj2}
\end{theorem}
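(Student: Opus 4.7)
The plan is to adapt the two-parameter shooting argument used to prove Theorem \ref{twe1} in the half-line case, now working over the parameter domain $\mathbb{R}\times(0,\infty)$ rather than $(0,\infty)\times(0,\infty)$, since $a$ is allowed to be any real number in the whole-line setting while Lemma \ref{gam1} still forces $\gamma>0$. The space $\mathbb{R}\times(0,\infty)$ is homeomorphic to $\mathbb{R}^2$ (e.g.\ via $(a,\gamma)\mapsto(a,\log\gamma)$), so Lemma \ref{top} applies.

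First I would introduce the four ``bad'' sets exactly as in the half-line case,
\begin{align*}
\Gamma_1&=\{(a,\gamma):b(a,\gamma)>U_0\}, & \Gamma_2&=\{(a,\gamma):b(a,\gamma)<U_0\},\\
\Gamma_3&=\{(a,\gamma):d(a,\gamma)>-V_0\}, & \Gamma_4&=\{(a,\gamma):d(a,\gamma)<-V_0\},
\end{align*}
and set $\Lambda_1=\Gamma_1\cup\Gamma_4$, $\Lambda_2=\Gamma_2\cup\Gamma_3$. If $(a,\gamma)$ lies outside $\Lambda_1\cup\Lambda_2$, then $b(a,\gamma)=U_0$ and $d(a,\gamma)=-V_0$, so $f(\cdot;a,\gamma)$ is the desired solution of (\ref{j2}). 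Openness of each $\Gamma_i$ (and hence of $\Lambda_1,\Lambda_2$) follows from the continuity of $b$ and $d$ in $(a,\gamma)$ provided by Corollary \ref{cfc1} and the analogous continuity for $d$ listed in Lemma \ref{lba2}.

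Next I would check the sets are non-empty using the limit properties collected in Lemma \ref{lba2}. The counterparts of Lemma \ref{lbg}, \ref{lba}, \ref{ldg}, \ref{lda} give $\lim_{a\to-\infty}b(a,\gamma)=0$, $\lim_{a\to\infty}b(a,\gamma)=\infty$, $\lim_{\gamma\to\infty}b(a,\gamma)=\infty$, $\lim_{a\to\infty}d(a,\gamma)=0$, $\lim_{a\to-\infty}d(a,\gamma)=-\infty$, $\lim_{\gamma\to\infty}d(a,\gamma)=-\infty$ and $\lim_{\gamma\to0}d(a,\gamma)=0$, together with strict monotonicity of $b$ in both $a,\gamma$ and of $d$ increasing in $a$ and decreasing in $\gamma$. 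These suffice to produce points in each $\Gamma_i$ and in $\Gamma_1\cap\Gamma_4$, $\Gamma_2\cap\Gamma_3$, $\Gamma_1\cap\Gamma_3$, $\Gamma_2\cap\Gamma_4$ by exactly the arguments used in Lemma \ref{lcon} and Lemma \ref{lds}: pick $\gamma$ large to land in $\Gamma_1\cap\Gamma_4$; pick $a$ very negative and $\gamma$ small to land in $\Gamma_2\cap\Gamma_3$; pick $a$ large to land in $\Gamma_1\cap\Gamma_3$; and pick $a$ very negative with $\gamma$ large to land in $\Gamma_2\cap\Gamma_4$.

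Connectedness of each $\Gamma_i$, and thus of $\Lambda_1,\Lambda_2$, is then obtained by the same step-path construction as in Lemma \ref{lcon}, using monotonicity of $b$ and $d$ to verify that the horizontal and vertical segments of the step-path stay in the appropriate set; the replacement of the lower limit $a\to 0$ by $a\to-\infty$ changes nothing in this argument since monotonicity holds for all $a\in\mathbb{R}$. Disconnectedness of $\Lambda_1\cap\Lambda_2=(\Gamma_1\cap\Gamma_3)\cup(\Gamma_2\cap\Gamma_4)$ (the crossed intersections being empty) then follows because the two non-empty pieces are disjoint and open. Applying Lemma \ref{top} produces $(\bar a,\bar\gamma)\notin\Lambda_1\cup\Lambda_2$, and Theorem \ref{j21} closes the argument (uniqueness comes from \cite[Lemma 4.7]{selfnon}). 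The only place where the whole-line case requires genuine care, and which I view as the main obstacle, is verifying the non-emptiness and connectedness of pieces of $\Gamma_i$ that lie in the strip $\{a\le 0\}$: here one cannot rely on the half-line limit $a\to 0^+$ and must invoke the replacement limit $\lim_{a\to-\infty}b(a,\gamma)=0$ from Lemma \ref{lba2}, and check that the step-path joining a point with $a<0$ to a point with $a>0$ stays inside the relevant $\Gamma_i$, which is again immediate from the monotonicity statements in Lemma \ref{lba2}.
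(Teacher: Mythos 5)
Your proposal is correct and follows essentially the same route as the paper: the paper's own proof of Theorem \ref{tj2} simply invokes the two-parameter shooting argument of the half-line case (the sets $\Gamma_i$, $\Lambda_1$, $\Lambda_2$ and Lemma \ref{top}) applied to $\mathbb{R}\times(0,\infty)$ via the homeomorphism $(a,\gamma)\mapsto(a,\log\gamma)$, using the transferred properties of $b$ and $d$ from Lemma \ref{lba2} and concluding with Theorem \ref{j21}. Your elaboration of which limits replace the half-line ones (in particular $\lim_{a\to-\infty}b(a,\gamma)=0$) is exactly the adjustment the paper intends.
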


\begin{proof}This follows by using a similar argument to that in the proof of Lemma \ref{lcon} in the half-line case, applying Lemma \ref{top} to the set $\mathbb{R}\times(0,\infty)$, which is homeomorphic to the entire plane, for example, if we define a homeomorphism $g: \mathbb{R}\times(0,\infty)\mapsto\mathbb{R}^2$ such that $g(x,y)=(x,\log y)$, together with Theorems \ref{j21}. \end{proof}

\subsection{Self-similar solutions for $\varepsilon=0$}
Now we consider
\begin{align}
-\frac{1}{2}\eta f'(\eta)=[\phi'(f(\eta))f'(\eta)]',\quad  \eta<a,
\label{l234}
\end{align}
with boundaries required
\begin{align}
  &\lim_{\eta\rightarrow-\infty}f(\eta)=U_0,\non\\
  &\lim_{\eta\nearrow a}f(\eta)=0, \quad\lim_{\eta\nearrow a}\phi'(f(\eta))f'(\eta)=-\frac{aV_0}{2}.\label{l345}
\end{align}
For $\varepsilon=0$ case, we know that $f(\eta)=-V_0$ for $\eta>a$ and $\gamma=\frac{aV_0}{2}$ which are the same as in the half-line case. Note that when $\varepsilon=0$,  $a,\gamma$ are positive by Lemma \ref{lwg0}. Since we showed that for each $a\in \mathbb{R},\gamma>0$, there exists solution $f$ for $\eta\in(a-\delta,a)$ for some $\delta>0$, then there exists a solution of (\ref{l234}) on interval $(a-\delta,\a)$. By Lemma \ref{lbw2} we know that with $\gamma=\frac{aV_0}{2}$ the solution $f$ can be continued back to $-\infty$. As for the $\varepsilon>0$ case, we know that $f$ is a monotonically decreasing function.

Now define
\begin{align*}
b(a)\colon=\displaystyle\lim_{\eta\rightarrow-\infty}f\left(\eta;a,\displaystyle\frac{aV_0}{2}\right).
\end{align*}
Note that we use the same notation $b(a)$ as in the half-line case, but here $b(a)$ is defined as the limit of $f(\eta;a)$ as $\eta\rightarrow-\infty$ rather than $\eta\rightarrow0$.
\begin{lemma}\label{lb01}
The function $b(a)=\displaystyle\lim_{\eta\rightarrow-\infty}f(\eta;a)$ satisfies the same properties as in Lemma \ref{lb0}.
\end{lemma}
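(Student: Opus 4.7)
The plan is to reduce each of the four properties to the corresponding property of the two-parameter function $b(a,\gamma)$ established in Lemma \ref{lba2}, exploiting the fact that here $\gamma=\tfrac{1}{2}aV_0$ is a strictly increasing linear function of $a$; by Lemma \ref{lwg0} only $a\in(0,\infty)$ need be considered.

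Properties (i), (iii) and (iv) are essentially formal. Lemma \ref{lba2} asserts that $b(a,\gamma)$ is strictly monotonically increasing in each variable and is jointly continuous in $(a,\gamma)$; composing with the strictly increasing linear map $a\mapsto(a,\tfrac{1}{2}aV_0)$ immediately yields (i) and (iii). For (iv), fix any $a_0>0$; monotonicity in $a$ together with $\displaystyle\lim_{\gamma\to\infty}b(a_0,\gamma)=\infty$ (the whole-line analogue of Lemma \ref{lbg}(iii) supplied by Lemma \ref{lba2}) gives
\begin{align*}
b\left(a,\tfrac{aV_0}{2}\right)\ge b\left(a_0,\tfrac{aV_0}{2}\right)\to\infty\quad\text{as }a\to\infty.
\end{align*}

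The main obstacle is (ii), $\displaystyle\lim_{a\to 0^+}b(a)=0$, because Lemma \ref{lba2} provides no information here (its only $a$-limit is at $-\infty$). The plan is to adapt the exponential-weight argument from the proof of Lemma \ref{lb0}(ii), while taking care to avoid circularity. First I use (i) to obtain a uniform coefficient bound: for $a\in(0,1]$, $b(a)\le b(1)=:K$, hence $\phi'(f(\eta))\le N:=\phi'(K)$ throughout. Writing $g:=\phi(f)$ so that $g''=-\tfrac{\eta}{2}f'=-\tfrac{\eta g'}{2\phi'(f)}$, a short computation gives
\begin{align*}
g''+\tfrac{\eta}{2N}g'=\tfrac{\eta g'}{2}\left(\tfrac{1}{N}-\tfrac{1}{\phi'(f)}\right)\le 0\quad\text{on }(-\infty,0],
\end{align*}
since there $\eta g'\ge 0$ and $\phi'(f)\le N$. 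Equivalently $(e^{\eta^2/(4N)}g')'\le 0$, so $|g'(\eta)|\le|g'(0)|\,e^{-\eta^2/(4N)}$ for $\eta\le 0$. Integrating from $\eta$ to $0$ and letting $\eta\to-\infty$ yields
\begin{align*}
\phi(b(a))\le\phi(f(0;a))+|g'(0)|\sqrt{\pi N}.
\end{align*}

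It then suffices to check that both right-hand terms vanish as $a\to 0^+$. On $(0,a)$ the whole-line solution coincides with its half-line counterpart by Picard uniqueness, so $f(0;a)$ equals the half-line quantity of Lemma \ref{lb0}, which tends to $0$ by Lemma \ref{lb0}(ii). Integrating the ODE on $(0,a)$ gives
\begin{align*}
|g'(0)|=\gamma+\tfrac{1}{2}\int_0^a f(s)\,ds\le\tfrac{1}{2}aV_0+\tfrac{1}{2}af(0;a),
\end{align*}
which also tends to $0$. The continuity and strict monotonicity of $\phi$ with $\phi(0)=0$ then force $b(a)\to 0$, establishing (ii).
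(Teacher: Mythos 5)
Your proposal is correct and follows essentially the same route as the paper: parts (i), (iii), (iv) are reduced to the previously established monotonicity/continuity results, and part (ii) is proved by the identical exponential-weight argument, multiplying by $e^{\eta^2/(4N)}$ with $N=\phi'(b(1))$, integrating twice, and showing that both $\phi(f(0;a))$ and $|\phi'(f(0))f'(0)|=\tfrac{aV_0}{2}+\tfrac12\int_0^a f$ vanish as $a\to0^+$. Your explicit appeal to Lemma \ref{lb0}(ii) via Picard uniqueness for the term $f(0;a)$ is in fact slightly cleaner than the paper's citation, but the argument is the same.
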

\begin{proof}(i)(iii)(iv) follow immediately from Lemma \ref{lb0}. It remains to prove that $\displaystyle\lim_{a\rightarrow0}b(a)=0$, because $b(a)$ is the limit of $f$ at $\eta\rightarrow-\infty$ rather than $\eta=0$. Note that $a>0$ when $\varepsilon=0$, since $\gamma=\frac{aV_0}{2}>0$.

 Let $a<1$ and denoting $N=\phi'(b(1))$, we have $\phi'(f)\le N$ by (i). Then we get directly from (\ref{l234}) that
\begin{align}
-\frac{\eta}{2N}[\phi(f(\eta))]'\ge[\phi(f(\eta))]'',\label{1223}
\end{align}
then multiplying (\ref{1223}) by $e^{\frac{-\eta^2}{4N}}$ and integrating from $\eta$ to $0$ yields
\begin{align*}
[\phi(f(\eta))]'\ge Ae^{\frac{-\eta^2}{4N}},
\end{align*}
where $A=\phi'(f(0))f'(0)<0$. Integrating again from $-\infty$ to $0$ we get
\begin{align*}
\phi(b(a))\le\phi(f(0))-A\int_{-\infty}^0e^{\frac{-s^2}{4N}}{\rm d}s.
\end{align*}
Integrating the equation (\ref{l234}) from $0$ to $a$ yields
\begin{align*}
\frac{1}{2}\int_0^af(s){\rm d}s=\frac{aV_0}{2}-\phi'(f(0))f'(0).
\end{align*}
Then we have
\begin{align*}
-A=\frac{aV_0}{2}+\displaystyle\frac{1}{2}\int_0^af(s){\rm d}s\rightarrow 0\ {\rm as}\ a\rightarrow 0.
\end{align*}
Therefore $\phi(b(a))\le\phi(f(0))-A\displaystyle\int^0_{-\infty}e^{\frac{-s^2}{4N}}{\rm d}s\rightarrow 0$ as $a\rightarrow 0$, since $\displaystyle\int^0_{-\infty}e^{\frac{-s^2}{4N}}<\infty$ and $\phi(f(0))\rightarrow 0$ as $a\rightarrow 0$ by Lemma \ref{lba} (ii) and $\phi(0)=0$.
\end{proof}

The following result follows by using a one-parameter shooting method similar to that used to prove Theorem \ref{t01}, replacing $(0,\infty)$ with $\mathbb{R}$.
\begin{theorem}
Suppose $\varepsilon=0$, then there exists a unique solution $f$ of problem (\ref{jjj2}).
\label{tew0}
\end{theorem}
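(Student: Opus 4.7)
The plan is to mirror the one-parameter shooting argument used in the proof of Theorem~\ref{t01}, with $b(a)$ now interpreted as the limit of $f(\eta;a,\tfrac{aV_0}{2})$ at $\eta\to-\infty$ rather than at $\eta=0$. By Lemma~\ref{lwg0}, whenever $f$ solves (\ref{jjj2}) the parameters must satisfy $a>0$ and $\gamma=\tfrac{aV_0}{2}>0$, so the shooting parameter $a$ ranges over $(0,\infty)$, exactly as in the half-line case, and no separate treatment of $a\le 0$ is required.

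First I would record that, for any $a>0$, the existence and uniqueness of a positive solution of (\ref{l234}) on a left-neighbourhood $(a-\delta,a)$ is furnished by Lemma~\ref{lkb11} with the specific choice $\gamma=\tfrac{aV_0}{2}$, while Lemma~\ref{lbw2} allows this solution to be continued back to $\eta=-\infty$. Hence $b(a):=\lim_{\eta\to-\infty}f(\eta;a,\tfrac{aV_0}{2})$ is well-defined on $(0,\infty)$, and by Lemma~\ref{lb01} it is strictly increasing, continuous, tends to $0$ as $a\to 0^+$, and tends to $\infty$ as $a\to\infty$.

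Following the shooting argument of Theorem~\ref{t01}, I would introduce the two ``bad'' sets
\begin{align*}
S^-=\{a\in(0,\infty):b(a)<U_0\},\qquad S^+=\{a\in(0,\infty):b(a)>U_0\}.
\end{align*}
Continuity of $b$ makes each $S^\pm$ open in $(0,\infty)$, strict monotonicity forces them to be disjoint, and the limits of $b$ at $0^+$ and $+\infty$ (from Lemma~\ref{lb01}) ensure that both are non-empty. Since two non-empty disjoint open subsets of the connected set $(0,\infty)$ cannot cover it, there exists $\bar a\in(0,\infty)\setminus(S^-\cup S^+)$, and any such $\bar a$ satisfies $b(\bar a)=U_0$. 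The corresponding $f(\,\cdot\,;\bar a,\tfrac{\bar aV_0}{2})$, extended by the constant value $-V_0$ on $[\bar a,\infty)$, is then a solution of (\ref{jjj2}). Uniqueness of $\bar a$, and hence of $f$, follows immediately from the strict monotonicity of $b$.

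I do not expect a substantial obstacle: every analytical ingredient (local existence, continuation to $-\infty$, strict monotonicity, continuity, and the limit values of $b$) has already been proved in the half-line case and transferred to the whole-line setting via Lemma~\ref{lb01}. The only mildly delicate point is verifying that the domain of the shooting parameter is truly the whole of $(0,\infty)$, which is precisely what Lemma~\ref{lwg0} provides; once this is in hand, the proof of Theorem~\ref{t01} carries over essentially verbatim with $(0,\infty)$ playing the same role it did in the half-line setting.
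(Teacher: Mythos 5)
Your proposal is correct and follows essentially the same route as the paper, which proves Theorem \ref{tew0} by exactly this one-parameter shooting on $b(a)=\lim_{\eta\to-\infty}f(\eta;a,\tfrac{aV_0}{2})$, using the properties transferred in Lemma \ref{lb01} and the disjoint-open-sets argument of Theorem \ref{t01}. Your observation that Lemma \ref{lwg0} confines the shooting parameter to $(0,\infty)$ (so no $a\le 0$ case arises) is consistent with the paper's setup and, if anything, makes the reduction to the half-line argument slightly more explicit.
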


\section{Self-similar solutions with special $\phi' (f)=f^{m-1}$ with $m>1$}
The choice of $\phi$ satisfying (\ref{phi}) and (\ref{phi1}) plays an important role in the characterisation of rates at which one substance invades another of the system (\ref{a2}). For concreteness, we consider the specific family that is motivated by porous medium equation
\begin{align}
\phi'(w)=w^{m-1},\label{pm}
\end{align}
with $m>1$, which satisfies the conditions (\ref{phi}) and (\ref{phi1}).

The form of self-similar solution of the limit problems with nonlinear diffusion $w(x,t)=f(\eta)$ is exactly the same as in the linear diffusion case where $\eta=\frac{x}{\sqrt t}$ is independent of the choice of $\phi$. We are interested in how the free boundary is affected by $m$, in the other words, the relationship between $m$ and $a$, where $a$ gives the position of free-boundary because $f(a)=0$.

In the following section, we focus on the whole line case with $\varepsilon=0$ and explore the self-similar solution $f_m(\eta)=f(\eta;m)$, in particular, how the value $a$ depends on $m$. The study on half-line case and when $\varepsilon>0$ can be found on \cite{thesis}.

We consider the whole line case with the specific choice of $\phi'$ (\ref{pm}). For $\varepsilon=0$, the problem satisfied by $f$ is

\begin{align}
\left\{\begin{aligned}
  &-\frac{1}{2}\eta f'(\eta)=[f^{m-1}(\eta)f'(\eta)]',\quad &&{\rm if}\ -\infty<\eta<a,\\
  &f(\eta)=-V_0,\quad &&{\rm if}\ a<\eta<\infty,\\
  &\lim_{\eta\rightarrow -\infty}f(\eta)=U_0,\\
  &\lim_{\eta\nearrow a}f(\eta)=0,\\
  &\lim_{\eta\nearrow a}f^{m-1}(\eta)f'(\eta)=-\frac{aV_0}{2},\\
\end{aligned}\right.
\label{j234}
\end{align}
where $a$ is positive.

Recall $f_{m_i}(\eta)=f(\eta;m_i)$, denote $a_{m_i}$ be the position of free boundary where $f_{m_i}(a_{m_i})=0$, and $\gamma_{m_i}=-\displaystyle\lim_{\eta\nearrow a_{m_i}}f_{m_i}^{m_i-1}(\eta)f_{m_i}'(\eta)=\frac{a_{m_i}V_0}{2}$.

Consider $f_{m_1}$ and $f_{m_2}$ satisfying (\ref{j234}) with $m_1\neq m_2$, we first deduce some results about intersection of $f_{m_1}$ and $f_{m_2}$.

\begin{lemma}
Suppose $a_{m_1}<a_{m_2}$, if $f_{m_1}$ and $f_{m_2}$ satisfy (\ref{j234}), then there exists some $\eta_0<a_{m_1}$ such that $f_{m_1}(\eta_0)=f_{m_2}(\eta_0)$.
\label{lm1}
\end{lemma}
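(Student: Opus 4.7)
The plan is to argue by contradiction, so I assume the conclusion fails: $f_{m_1}(\eta)\ne f_{m_2}(\eta)$ for every $\eta<a_{m_1}$. Since $f_{m_2}$ is strictly monotonically decreasing on $(-\infty,a_{m_2})$ by Lemma \ref{mono1} with $f_{m_2}(a_{m_2})=0$, I have $f_{m_2}(a_{m_1})>0=\lim_{\eta\nearrow a_{m_1}}f_{m_1}(\eta)$, so $f_{m_2}>f_{m_1}$ in a left neighbourhood of $a_{m_1}$. By continuity, the no-intersection assumption then forces
\[
f_{m_1}(\eta)<f_{m_2}(\eta)\quad\text{for all }\eta\in(-\infty,a_{m_1}).
\]

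The key tool will be the scalar identity
\[
a_m(U_0+V_0)=\int_{-\infty}^{a_m}\bigl(U_0-f(\eta;m)\bigr)\,d\eta
\]
satisfied by every solution of (\ref{j234}). To obtain it I integrate the equation $\bigl(\phi'(f)f'\bigr)'+\tfrac{1}{2}\eta f'=0$ from $-\infty$ to $a_m$: the boundary value of $\phi'(f)f'$ at $a_m$ is $-a_mV_0/2$ by the free-boundary condition, while its limit at $-\infty$ is zero because $f$ is bounded and $f'\to 0$ (Lemma \ref{ld1}). For the remaining term I rewrite $\eta f'=\eta(f-U_0)'$ and integrate by parts; the exponential convergence $U_0-f=O\bigl(e^{\eta/(4\phi'(U_0))}\bigr)$ at $-\infty$ from Corollary \ref{cf} guarantees both that the boundary contribution $\eta(f-U_0)$ vanishes at $-\infty$ and that $U_0-f$ is integrable there.

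Applying the identity with $m=m_1$ and $m=m_2$ and subtracting produces
\[
(a_{m_2}-a_{m_1})(U_0+V_0)=\int_{-\infty}^{a_{m_1}}\bigl(f_{m_1}-f_{m_2}\bigr)\,d\eta+\int_{a_{m_1}}^{a_{m_2}}\bigl(U_0-f_{m_2}\bigr)\,d\eta.
\]
Under the contradiction hypothesis the first integral is strictly negative, while $f_{m_2}>0$ on $(a_{m_1},a_{m_2})$ (again by monotonicity) makes the second integral strictly less than $(a_{m_2}-a_{m_1})U_0$. These bounds together force $(a_{m_2}-a_{m_1})V_0<0$, contradicting $V_0>0$ and $a_{m_1}<a_{m_2}$, so the required $\eta_0$ must exist.

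The main obstacle is making the integration by parts at $-\infty$ fully rigorous, which is exactly the purpose of the exponential decay estimate in Corollary \ref{cf}; beyond that step, the argument reduces to comparing the two integrated identities.
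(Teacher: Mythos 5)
Your proposal is correct and follows essentially the same route as the paper: argue by contradiction that $f_{m_1}<f_{m_2}$ on $(-\infty,a_{m_1})$, integrate the ODE over the whole domain using $f'\to 0$ and the exponential convergence to $U_0$ to control the boundary terms, and compare the resulting integral identities for $m_1$ and $m_2$ to contradict $a_{m_1}<a_{m_2}$. Your per-solution identity $a_m(U_0+V_0)=\int_{-\infty}^{a_m}(U_0-f)\,d\eta$ is just a rearrangement of the subtracted identity the paper writes down, and your treatment of the improper integral at $-\infty$ is in fact slightly more careful than the paper's.
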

\begin{proof}
For $\varepsilon=0$, suppose there exists no $\eta_0<a_{m_1}$ such that $f_{m_1}(\eta_0)=f_{m_2}(\eta_0)$. Then we must have $f_{m_1}<f_{m_2}$ for all $\eta\in\mathbb{R}$ since $a_{m_1}<a_{m_2}$.

We consider
\begin{align}
&-\frac{1}{2}\eta f'(\eta)=[f^{m-1}(\eta)f'(\eta)]',\quad  \eta<a,\label{j241}
\end{align}
with $\gamma=\frac{aV_0}{2}$.
If $f_{m_1}$ and $f_{m_2}$ are solution of (\ref{j241}) with corresponding $m_1, m_2$, then, integrating the equation of $f$ from $\eta$ to $a_{m_1}, a_{m_2}$, subtracting the equations and letting $\eta\rightarrow-\infty$ yields
\begin{align*}
\frac{1}{2}\int_{-\infty}^{a_{m_1}}\left[f_{m_2}(s)-f_{m_1}(s)\right]{\rm d}s+\int_{a_{m_1}}^{a_{m_2}}f_{m_2}(s){\rm d}s=\frac{a_{m_1}V_0}{2}-\frac{a_{m_2}V_0}{2}.
\end{align*}
We know that the left-hand side is positive since $f_{m_2}>f_{m_1}$ for $\eta<a_{m_1}$. For $a_{m_1}<a_{m_2}$, the left-hand side is negative, then there is a contradiction, then there must exists $\eta_0<a_{m_1}$ such that $f_{m_1}(\eta_0)=f_{m_2}(\eta_0)$. \end{proof}

We obtain the following result when $\varepsilon=0$ by exploiting the fact that $\gamma=\frac{aV_0}{2}$. In the following result, we only study the positive solutions $f(\eta)$ for $\eta<a$, and we consider additional conditions $0<U_0<1$ and $m\ge2$.  Note that the relationship between $a$ and $m$ tells us how the speed of one substance penetrating into the other is affected by $m$.
\begin{theorem}\label{tend}
Let $\varepsilon=0$ and $U_0<1$, suppose $f_{m_1},f_{m_2}$ satisfy (\ref{j234}) with corresponding $m_1,m_2\ge2$ and $a_{m_1},a_{m_2}$. Then if $m_1>m_2$, we have
\begin{align*}
 0<a_{m_1}<a_{m_2}.
\end{align*}
\end{theorem}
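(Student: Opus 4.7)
The approach will be by contradiction: suppose $m_1>m_2\ge 2$, $U_0<1$, but $a_{m_1}\ge a_{m_2}$. The key tools are (a) a mass-balance identity derived from integrating the ODE $(f^{m-1}f')'=-\tfrac12\eta f'$ over $(-\infty,a_m)$, and (b) a flux comparison at any point where $f_{m_1}$ and $f_{m_2}$ take a common value $\bar f$, which exploits the decisive inequality $\bar f^{m_1-1}<\bar f^{m_2-1}$ whenever $\bar f\in(0,U_0)\subset(0,1)$.

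The first step is to derive the mass balance. Integrating the ODE over $(-\infty,a_m)$, using $f_m'\to 0$ at $-\infty$ (the $\varepsilon=0$ analogue of Lemma \ref{ld1}) and the exponential decay $\eta(U_0-f_m)\to 0$ (Corollary \ref{cf}) to justify integrating by parts against $(f_m-U_0)'$, yields
\[
a_m(U_0+V_0)=\int_{-\infty}^{a_m}(U_0-f_m(s))\,ds.
\]
The same integration by parts, carried out only up to a point $\eta_0<a_{m_2}$ where $f_{m_1}(\eta_0)=f_{m_2}(\eta_0)=:\bar f>0$, and then subtracting the two resulting identities, produces the flux identity
\[
\bar f^{m_1-1}f_{m_1}'(\eta_0)-\bar f^{m_2-1}f_{m_2}'(\eta_0)=\tfrac12\int_{-\infty}^{\eta_0}(f_{m_1}-f_{m_2})(s)\,ds.
\]

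Next I would locate a suitable intersection $\eta_0$. If $a_{m_1}>a_{m_2}$, Lemma \ref{lm1} with the indices swapped produces one in $(-\infty,a_{m_2})$. If instead $a_{m_1}=a_{m_2}=a^*$, the mass balance subtracted between the two problems forces $\int_{-\infty}^{a^*}(f_{m_1}-f_{m_2})\,ds=0$, and the local asymptotic $f_m(\eta)\sim[m a^*V_0(a^*-\eta)/2]^{1/m}$ near $a^*$ (which follows from $f^{m-1}f'\to-a^*V_0/2$) shows $f_{m_1}>f_{m_2}$ just left of $a^*$ since $1/m_1<1/m_2$, forcing an intersection further to the left. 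Real-analyticity of $f_{m_1}-f_{m_2}$ (which cannot vanish identically on an interval, since the two solutions satisfy distinct ODEs) ensures isolated zeros, so I select $\eta_0$ to be the rightmost intersection in $(-\infty,a_{m_2})$ (or $(-\infty,a^*)$). On the right of $\eta_0$ up to $a_{m_2}$ (respectively $a^*$) one has $f_{m_1}>f_{m_2}$, whence $f_{m_1}'(\eta_0)\ge f_{m_2}'(\eta_0)$, both strictly negative by Lemma \ref{mono1}. Since $\bar f\in(0,U_0)\subset(0,1)$ and $m_1>m_2$ give $0<\bar f^{m_1-1}<\bar f^{m_2-1}$, the left-hand side of the flux identity is strictly positive, so
\[
\int_{-\infty}^{\eta_0}(f_{m_1}-f_{m_2})(s)\,ds>0.
\]

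The contradiction now follows by subtracting the two mass-balance equations. If $a_{m_1}=a_{m_2}=a^*$, subtraction gives $\int_{-\infty}^{a^*}(f_{m_1}-f_{m_2})\,ds=0$, but this integral splits at $\eta_0$ as a sum of two strictly positive pieces, a contradiction. If $a_{m_1}>a_{m_2}$, the subtracted identity rearranges to
\[
(a_{m_1}-a_{m_2})V_0=-\int_{-\infty}^{a_{m_2}}(f_{m_1}-f_{m_2})\,ds-\int_{a_{m_2}}^{a_{m_1}}f_{m_1}(s)\,ds,
\]
forcing $\int_{-\infty}^{a_{m_2}}(f_{m_1}-f_{m_2})\,ds<0$; splitting at $\eta_0$ and using the positive $(\eta_0,a_{m_2})$ piece then yields $\int_{-\infty}^{\eta_0}(f_{m_1}-f_{m_2})\,ds<0$, contradicting the previous step. \textbf{The main obstacle} is the strict sign analysis at $\eta_0$: the hypothesis $U_0<1$ is essential precisely because it guarantees $\bar f^{m_1-1}<\bar f^{m_2-1}$ at every possible intersection value $\bar f$, and this, together with the one-sided derivative comparison $f_{m_1}'(\eta_0)\ge f_{m_2}'(\eta_0)$ (which itself requires choosing $\eta_0$ as the rightmost intersection, and hence the real-analytic isolation of zeros of $f_{m_1}-f_{m_2}$), is what powers the whole argument.
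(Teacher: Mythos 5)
Your proof is correct, and its engine is the same as the paper's: at an intersection point $\eta_0$ chosen as close as possible to the smaller free boundary, compare the fluxes $\bar f^{\,m_i-1}f_{m_i}'(\eta_0)$ using the two facts that $\bar f^{\,m_1-1}<\bar f^{\,m_2-1}$ (this is exactly where $U_0<1$ and $m_1>m_2$ enter) and $f_{m_1}'(\eta_0)\ge f_{m_2}'(\eta_0)$ (from the choice of $\eta_0$), and play the resulting sign against an integrated form of the ODE. The difference is in the decomposition: the paper integrates each equation over $[\eta_0,a_{m_i}]$ and subtracts, obtaining the single identity (\ref{mma01}) whose two sides must have opposite signs when $a_{m_1}>a_{m_2}$, whereas you integrate over $(-\infty,\eta_0]$ to get the flux identity and then close the argument with the global mass balance $a_m(U_0+V_0)=\int_{-\infty}^{a_m}(U_0-f_m)$ — i.e.\ you use the complementary half of the line plus conservation of mass, and adding your two identities essentially recovers the paper's. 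Your route buys one genuine improvement: you explicitly exclude $a_{m_1}=a_{m_2}$ via the local asymptotics $f_m(\eta)\sim[m a^*V_0(a^*-\eta)/2]^{1/m}$, while the paper's proof only derives a contradiction from $a_{m_1}>a_{m_2}$ and is silent on the equality case (its Lemma \ref{lm1} is likewise stated only for strictly ordered free boundaries). Two small remarks: the appeal to real-analyticity to isolate the rightmost intersection is unnecessary — taking $\eta_0=\sup\{\eta<a_{m_2}: f_{m_1}(\eta)=f_{m_2}(\eta)\}$, which is $<a_{m_2}$ because $f_{m_1}>f_{m_2}$ on a left-neighbourhood of $a_{m_2}$, already yields an intersection point with $f_{m_1}>f_{m_2}$ to its right; and the boundary terms at $-\infty$ in your integrations by parts are indeed controlled by Corollary \ref{cf} and Lemma \ref{ld1}, as you indicate.
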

\begin{proof}For $\varepsilon=0$ case, we have $\gamma=\frac{aV_0}{2}>0$, then by Lemma \ref{lm1}, there exists $\eta_0<\min\{a_{m_1},a_{m_2}\}$ such that $f_{m_1}(\eta_0)=f_{m_2}(\eta_0)$. We know that $a_{m_1},a_{m_2}>0$ since $\gamma_{m_1},\gamma_{m_2}>0$.

Now let $\eta_0$ be the closet intersection point to $\min\{a_{m_1},a_{m_2}\}$, integrating (\ref{j234}) from $\eta_0$ to $a_{m_1},a_{m_2}$ we get
\begin{align}
-\frac{1}{2}\eta_0 f_{m_1}(\eta_0)+\frac{1}{2}\int_{\eta_0}^{a_{m_1}}f_{m_1}(s){\rm d}s=-\frac{a_{m_1}V_0}{2}-f_{m_1}^{m_1-1}(\eta_0)f_{m_1}'(\eta_0),\label{mm01}\\
-\frac{1}{2}\eta_0 f_{m_2}(\eta_0)+\frac{1}{2}\int_{\eta_0}^{a_{m_2}}f_{m_2}(s){\rm d}s=-\frac{a_{m_2}V_0}{2}-f_{m_2}^{m_2-1}(\eta_0)f_{m_2}'(\eta_0).\label{mm02}
\end{align}
Subtracting (\ref{mm01}) from (\ref{mm02}) we have
\begin{align}
\frac{1}{2}\int_{\eta_0}^{a_{m_1}}f_{m_1}(s){\rm d}s-\frac{1}{2}\int_{\eta_0}^{a_{m_2}}f_{m_2}(s){\rm d}s+\frac{a_{m_1}V_0}{2}-\frac{a_{m_2}V_0}{2}=f_{m_1}^{m_1-1}(\eta_0)f_{m_1}'(\eta_0)-f_{m_2}^{m_2-1}(\eta_0)f_{m_2}'(\eta_0).
\label{mma01}
\end{align}
For $m_1>m_2$ we know $f_{m_1}^{m_1-1}(\eta_0)<f_{m_2}^{m_2-1}(\eta_0)$, since $U_0<1$ and $f$ is decreasing. Then if $a_{m_1}>a_{m_2}$, the left-hand side of (\ref{mma01}) is positive and $-f_{m_1}'(\eta_0)<-f_{m_2}'(\eta_0)$, which gives
\begin{align*}
f_{m_1}^{m_1-1}(\eta_0)f_{m_1}'(\eta_0)-f_{m_2}^{m_2-1}(\eta_0)f_{m_2}'(\eta_0)<0,
\end{align*}
which contradicts the left-hand side is positive. Therefore if $m_1>m_2$, we have $a_{m_1}<a_{m_2}$.
\end{proof}

The analogous result for the half-line case can be proved by a similar method.

\end{document}